\theoremstyle{plain}
\newcounter{mycount}
\theoremstyle{plain}
\newtheorem{theorem}[mycount]{Theorem}
\newtheorem{corollary}[mycount]{Corollary}
\newtheorem{lemma}[mycount]{Lemma}
\newtheorem{proposition}[mycount]{Proposition}
\theoremstyle{definition}
\newtheorem{definition}{Definition}
\theoremstyle{example}
\newtheorem{example}{Example}
\theoremstyle{openproblem}
\def\T{\CMcal{T}}
\def\A{\mathcal{A}}
\def\S{\CMcal{S}}
\def\Q{\CMcal{Q}}
\def\inv{\mathsf{inv}}
\def\coinv{\mathsf{coinv}}
\def\quinv{\mathsf{quinv}}
\def\maj{\mathsf{maj}}
\def\Des{\mathsf{Des}}
\def\des{\mathsf{des}}
\def\South{\mathsf{South}}
\def\dg{\mathsf{dg}}
\def\sym{\mathsf{Sym}}
\def\sort{\pi}
\theoremstyle{remark}
\newtheorem{remark}{Remark}
\numberwithin{equation}{section}
\numberwithin{figure}{section}
\title[The monomial expansions of modified Macdonald polynomials]{The monomial expansions of modified Macdonald polynomials}
\author{Emma Yu Jin}
\address{School of Mathematical Sciences, Xiamen University, Xiamen 361005, China}
\email{yjin@xmu.edu.cn}
\author{Xiaowei Lin}
\address{School of Mathematical Sciences, Xiamen University, Xiamen 361005, China}
\email{linxiaoweiqing@126.com}
\subjclass[2020]{Primary: 05E05; Secondary: 33D52, 05A19}
\keywords{modified Macdonald polynomials, bijections, queue inversions, inversions, monomial expansions}
\begin{document}
\begin{abstract}
We discover a family $\A$ of sixteen statistics on fillings of any given Young diagram and prove new combinatorial formulas for modified Macdonald polynomials, that is, 
$$\tilde{H}_{\lambda}(X;q,t)=\sum_{\sigma\in \T(\lambda)}x^{\sigma}q^{\maj(\sigma)}t^{\eta(\sigma)}$$
for each statistic $\eta\in \A$. Building upon this new formula, we establish four compact formulas for the modified Macdonald polynomials, namely,
$$\tilde{H}_{\lambda}(X;q,t)=\sum_{\sigma}d_{\varepsilon}(\sigma)x^{\sigma}q^{\maj(\sigma)}t^{\eta(\sigma)}$$
which is summed over all canonical or dual canonical fillings of a Young diagram and $d_{\varepsilon}(\sigma)$ is a product of $t$-multinomials. 
Finally, the compact formulas enable us to derive four explicit expressions for the monomial expansion of modified Macdonald polynomials, one of which coincides with the formula given by Garbali and Wheeler (2020).
\end{abstract}

\maketitle

\section{Introduction and main results}\label{S:intro}
The Macdonald polynomials $P_{\lambda}(X;q,t)$ indexed by partitions are symmetric polynomials in infinite variables $X=(x_1,x_2,\ldots)$ with coefficients from the field $\mathbb{Q}(q,t)$. They are defined as the unique basis for the algebra of symmetric functions over $\mathbb{Q}(q,t)$, characterized by triangularity and orthogonality axioms \cite{Haiman99,Mac95}. The Macdonald polynomials $P_{\lambda}(X;q,t)$ unify several important families of symmetric functions, such as Hall--Littlewood functions ($q=0$), $q$-Whittaker functions ($t=0$), Jack functions (set $q=t^{\alpha}$ and let $t\rightarrow 1$), monomial symmetric functions ($q=0$ and $t=1$) and Schur functions ($q=t$).

The {\em modified Macdonald polynomials} $\tilde{H}_{\lambda}(X;q,t)$ are deduced from $P_{\lambda}(X;q,t)$ by a plethystic substitution, which are symmetric polynomials in $X$ with $\mathbb{N}[q,t]$--coefficients. The combinatorial interpretation of these coefficients was a celebrated breakthrough due to Haglund, Haiman and Loehr \cite{HHL04}, which connects the coefficients of $\tilde{H}_{\lambda}(X;q,t)$ to permutation statistics on arbitrary fillings of Young diagrams. In the sequel, we represent a Young diagram in a French manner and the precise definitions of some statistics are postponed to Section \ref{S:2}. Throughout the paper, two formulas are regarded to be different if the summands are not equal termwise. 

Let $\T(\lambda)$ denote the set of fillings of the Young diagram of $\lambda$ where each cell is filled with a positive integer. Each filling is also called as a tableau. 
For $\tau\in \T(\lambda)$, let $x^\tau$ be the product of $x_i$'s whenever $i$ is an entry of $\tau$. Haglund, Haiman and Loehr \cite{HHL04} showed that
\begin{align}\label{E:mmp1}
\tilde{H}_{\lambda}(X;q,t)=\sum_{\tau\in \T(\lambda)}x^{\tau}q^{\maj(\tau)}t^{\inv(\tau)}.
\end{align}
Here $\maj(T)$ and $\inv(T)$ are natural extensions, respectively of the major index and inversion numbers for permutations. Recently, Corteel, Haglund, Mandelshtam, Mason and Williams\cite{CHO19,CHO22} introduced a new statistic $\quinv$, called queue inversion, and proposed a conjecture on an equivalent form of (\ref{E:mmp1}):
\begin{align}\label{E:mmp2}
\tilde{H}_{\lambda}(X;q,t)=\sum_{\tau\in \T(\lambda)}x^{\tau}q^{\maj(\tau)}t^{\quinv(\tau)}.
\end{align}
This conjecture was subsequently  confirmed by Ayyer, Mandelshtam and Martin \cite{AMM23} by verifying that the RHS of (\ref{E:mmp2}) satisfies certain orthogonal and triangular conditions which uniquely characterizes the modified Macdonald polynomials $\tilde{H}_{\lambda}(X;q,t)$. 

Interestingly, the original motivation in \cite{CHO22} to study the combinatorial formula of $\tilde{H}_{\lambda}(X;q,t)$ stems from the search for new formulas with fewer summands and such formulas are named as ``compact formulas''. One of their main results is a compact formula for $\tilde{H}_{\lambda}(X;q,t)$ based on (\ref{E:mmp1}). Sooner after, Mandelshtam \cite{O:23,O:24} found a compact version of (\ref{E:mmp2}). More precisely, both formulas reduce the sum on the RHS of (\ref{E:mmp1}) or (\ref{E:mmp2}) to the sum over 
$\inv$-sorted (inversion sorted) or $\quinv$-sorted (queue inversion sorted) tableaux. That is, for $\varepsilon\in \{\inv,\quinv\}$, 
\begin{align}\label{E:sorted}
\tilde{H}_{\lambda}(X;q,t)=\sum_{\sigma \textrm{ is } \varepsilon\text{-}\textrm{sorted}}\textrm{perm}(\sigma)x^{\sigma}q^{\maj(\sigma)}t^{\varepsilon(\sigma)}
\end{align}
summed over all $\varepsilon$-sorted tableaux, where $\textrm{perm}(\sigma)$ is a $t$-multinomial, counting different ways to permute columns of equal height within a sorted tableau $\sigma$. 

In parallel to combinatorial formulas of $\tilde{H}_{\lambda}(X;q,t)$, Garbali and Wheeler derived new formulas for the modified Macdonald polynomials in terms of monomial symmetric functions \cite{GW20} through an integrable lattice model construction. This is a sophisticated extension of the formula by Kerov, Kirillov and Reshetikhin \cite{KKR:88,K98,KKR:882}.

Inspired by these recent development, our research on the modified Macdonald polynomials $\tilde{H}_{\lambda}(X;q,t)$ started from a refined equality of (\ref{E:mmp1}) and (\ref{E:mmp2}), which was conjectured by Ayyer, Mandelshtam and Martin \cite{AMM23}. To be precise, two fillings $\sigma,\tau$ of the Young diagram of $\lambda$ are called {\em row-equivalent} if the multisets of entries in the $i$th row of $\sigma$ and $\tau$ are exactly the same for all $i$. Somewhat to our surprise, the triples
 $(\maj,\inv,\quinv)$ and $(\maj,\quinv,\inv)$ have the same distribution over any equivalent class of a rectangular filling.
\begin{theorem}[\cite{JL24}]\label{T:3}
	Let $\lambda$ be a partition, let $\sigma$ be a filling of the Young diagram of $\lambda$, and let $[\sigma]$ denote the row-equivalent class of $\sigma$, then 
	\begin{align}\label{E:T1}
	\sum_{\tau\in [\sigma]}q^{\maj(\tau)}t^{\inv(\tau)}=\sum_{\tau\in [\sigma]}q^{\maj(\tau)}t^{\quinv(\tau)}.
	\end{align}	 
	If the diagram of $\lambda$ is a rectangle, then
	\begin{align}\label{E:T2}
	\sum_{\tau\in [\sigma]}q^{\maj(\tau)}t^{\inv(\tau)}u^{\quinv(\tau)}=\sum_{\tau\in [\sigma]}q^{\maj(\tau)}u^{\inv(\tau)}t^{\quinv(\tau)}.
	\end{align}
\end{theorem}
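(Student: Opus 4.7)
The plan is to construct an explicit bijection $\phi : [\sigma] \to [\sigma]$ that preserves $x^{\tau}$ and $\maj$ while sending $\inv$ to $\quinv$; in the rectangular case, I will refine this into an involution that swaps $\inv$ and $\quinv$ jointly. The first move is to reduce to a two-row identity. Each of $\maj$, $\inv$, and $\quinv$ admits a natural decomposition into row-local contributions: $\maj$ sums over descent cells, whose descent status depends only on the two rows they straddle, and both $\inv$ and $\quinv$ count triples using only cells in a pair of adjacent rows. Since membership in $[\sigma]$ is equivalent to choosing an arrangement of each row independently (with each row-multiset held fixed), identity~\eqref{E:T1} reduces to the analogous identity for every pair of adjacent rows with prescribed multisets.

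For the two-row identity, I would fix the arrangement of the bottom row and group the arrangements of the top row according to the set of columns at which a descent occurs; this set determines the local contribution to $\maj$, so within each such fiber the task becomes matching the $\inv$-generating function with the $\quinv$-generating function. I would implement this by a row-local map that reverses (or cyclically rotates) each maximal ``run'' of top-row entries lying between two consecutive descent columns. The motivation is that $\inv$-triples and $\quinv$-triples differ by an orientation — roughly, one opens to the right and the other to the left — so reversing the horizontal order within a descent-respecting run should convert one type of triple into the other. The rectangular case \eqref{E:T2} would then follow because, when the two rows have equal length, the fiber map becomes an involution exchanging $\inv$ with $\quinv$; composing such involutions across all adjacent pairs of rows produces a global involution $\Phi$ on $[\sigma]$ that swaps the two statistics while preserving $\maj$ and $x^{\tau}$.

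The main obstacle is ensuring that this local reversal genuinely implements the $\inv \leftrightarrow \quinv$ exchange at the level of individual triples. A naive row-reversal changes the descent pattern (hence $\maj$), so the reversal must respect run boundaries; but the counting subtleties of $\inv$ and $\quinv$ — namely the arm corrections by which they differ from raw triple counts — mean that careful accounting is needed for triples that cross a descent. I expect the argument to reduce to a case analysis on whether the endpoints of each run participate in a descent of the strip above or below, and possibly to an auxiliary induction on the number of descents in the two-row strip. Once the two-row bijection is established, \eqref{E:T1} follows by applying it simultaneously at every pair of adjacent rows, and \eqref{E:T2} follows from the involutive refinement in the rectangular case.
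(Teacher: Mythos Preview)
This theorem is cited from \cite{JL24} and not proved in the present paper, but the tools recalled in Section~\ref{S:3} and the parallel proof of Theorem~\ref{T:4} in Section~\ref{S:5} make the intended architecture clear, and against that your reduction step has a genuine gap. You claim that because the rows of $\tau\in[\sigma]$ are chosen independently, the identity reduces to a two-row identity for each adjacent pair; but independence of the row \emph{choices} does not make the generating function factor, since row~$i$ contributes simultaneously to the $(i-1,i)$ strip and the $(i,i+1)$ strip. A map that rearranges row~$i$ to convert $\inv$ to $\quinv$ on the strip $(i,i+1)$ will in general disturb the triple counts on the strip $(i-1,i)$, so ``applying the two-row bijection simultaneously at every pair of adjacent rows'' is not well-posed. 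The actual argument (mirrored here in Proposition~\ref{pro1A}) is an induction on the number of rows: one first handles the bottom $m-1$ rows, then treats the top strip with the already-rearranged row $m-1$, using the flip operators $\rho_i^r$ of Definition~\ref{Def:2} together with their $\inv$-counterparts from \cite{LN12}. These operators swap entries in two adjacent \emph{columns}, not within a single row, and are engineered so that $\maj$ is preserved, $\quinv$ (respectively $\inv$) changes by exactly $\pm 1$, and all triples involving any third column are untouched (Lemma~\ref{L:2}); the bijection is a composition of such swaps.

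Your proposed two-row map also misreads the asymmetry between the statistics. An $\inv$-triple uses two cells from the \emph{upper} row and one from the lower (the cell $c$ lies in the row of $a$), whereas a $\quinv$-triple uses two cells from the \emph{lower} row and one from the upper (the cell $c$ lies in the row of $b$); reversing runs within the top row alone cannot exchange these. The ``arm corrections'' you anticipate are not handled in the actual proof by bookkeeping at run boundaries but by the structural fact that the flip operators leave cross-column triples invariant, and the case analysis you foresee is replaced by the single stopping criterion~\eqref{E:cri} that determines how far down each flip propagates.
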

The main contribution of the current paper bridges the gap between combinatorial formulas (\ref{E:mmp1})--(\ref{E:sorted}) and explicit computable expressions of the modified Macdonald polynomials. In the first step, we introduce a family $\A$ of sixteen new statistics on fillings of a Young diagram where each statistic in $\A$ plays the same role as the statistic $\inv$ in (\ref{E:T1}). This produces new combinatorial formulas of $\tilde{H}_{\lambda}(X;q,t)$.
\begin{theorem}\label{T:4}
	Let $[\sigma]$ denote the row-equivalent class of $\sigma$ and Let $\A$ be the family of statistics in Definition \ref{Def:eta}. Then for every $\eta\in \A$,
	\begin{align}\label{E:eq16}
	\sum_{\tau\in[\sigma]}q^{\maj(\tau)}t^{\quinv(\tau)}=\sum_{\tau\in[\sigma]}q^{\maj(\tau)}t^{\eta(\tau)}.
	\end{align}
	Consequently, the modified Macdonald polynomial is given by
	\begin{align}\label{E:mmp3}
	\tilde{H}_{\lambda}(X;q,t)=\sum_{\tau\in \T(\lambda)}x^{\tau}q^{\maj(\tau)}t^{\eta(\tau)}.
	\end{align}
\end{theorem}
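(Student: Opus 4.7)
\emph{Step 1: Reduction of the consequence to the main equidistribution.}
I first observe that formula~(\ref{E:mmp3}) is an immediate consequence of~(\ref{E:eq16}) together with the Corteel--Haglund--Mandelshtam--Mason--Williams formula~(\ref{E:mmp2}). Indeed, partitioning $\T(\lambda)$ into its row-equivalence classes $[\sigma]$, and observing that the monomial $x^{\tau}$ depends only on the multisets of entries in the rows of $\tau$ and is therefore constant on each class, equation~(\ref{E:eq16}) lets me replace $t^{\quinv(\tau)}$ by $t^{\eta(\tau)}$ inside each class-sum without affecting the total. So the whole work reduces to establishing the identity~(\ref{E:eq16}) on an arbitrary row-equivalence class.

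\emph{Step 2: Bijective strategy.}
My plan is to construct, for each $\eta\in\A$, a bijection $\phi_{\eta}\colon[\sigma]\to[\sigma]$ satisfying
\begin{align*}
\maj(\phi_{\eta}(\tau))=\maj(\tau)\quad\text{and}\quad\eta(\phi_{\eta}(\tau))=\quinv(\tau)\qquad\text{for every }\tau\in[\sigma].
\end{align*}
The known case $\eta=\inv$ is provided by Theorem~\ref{T:3}, whose underlying bijection I intend to use as a template. Since $|\A|=16=2^{4}$, I expect that the statistics in $\A$ are obtained from $\quinv$ by four independent local toggles: each triple pattern contributing to $\quinv$ admits an alternative "dual" count, and choosing for each of four triple types whether to count the original or the dual version produces exactly $2^{4}$ statistics. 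Correspondingly, I will introduce four elementary row-level involutions $\iota_{1},\iota_{2},\iota_{3},\iota_{4}$ on $[\sigma]$ and define $\phi_{\eta}$ as the composition of whichever $\iota_{i}$'s correspond to the toggles that distinguish $\eta$ from $\quinv$.

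\emph{Step 3: Verification of each elementary involution.}
For each $\iota_{i}$ I would verify three properties: (a) $\iota_{i}$ permutes the class $[\sigma]$ (it is a row-operation that does not alter row multisets); (b) $\iota_{i}$ preserves $\maj$, for which I must check that the rearrangements triggered by the swap do not alter the pattern of vertical descents used in the definition of $\maj$; and (c) $\iota_{i}$ transfers the contribution of its associated triple type from one counting convention to its dual, while leaving the contributions of the other three triple types unchanged. Commutativity of the $\iota_{i}$'s together with (c) will then give, for each of the $2^{4}$ subsets of $\{\iota_{1},\iota_{2},\iota_{3},\iota_{4}\}$, a bijection realising one of the sixteen identities in~(\ref{E:eq16}).

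\emph{Step 4: Main obstacle.}
The technical core is verifying, for each elementary toggle $\iota_{i}$, that it is simultaneously $\maj$-preserving and acts cleanly on one prescribed triple type without disturbing the other three. Local row rearrangements can cascade: swapping two entries in a row may change the set of descents between that row and the next, and may simultaneously alter several triples of different types elsewhere. The heart of the proof therefore consists of a careful case-by-case analysis of the local triple patterns entering Definition~\ref{Def:eta}, isolating for each toggle a minimal move which changes exactly the target contribution; this is where I anticipate the argument to be delicate, and where the full combinatorial strength of Theorem~\ref{T:3} (rather than merely the $(\maj,\inv)\sim(\maj,\quinv)$ statement) will likely be needed as a base case.
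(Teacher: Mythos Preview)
Your Step~1 is correct, but the bijective strategy in Steps~2--4 rests on a structural hypothesis about $\A$ that does not match Definition~\ref{Def:eta}. The sixteen statistics are \emph{not} obtained from $\quinv$ by four independent toggles on triple patterns. Rather, each $\eta\in\A^{+}$ is governed by a choice of $\quinv$-quadruple set $\S$ (Definition~\ref{Def:typeqinv}), and Lemma~\ref{L:setSi} shows there are exactly eight such sets, arising from three binary choices among the quadruple orderings in $a_{1},a_{2},a_{3}$ (with $a_{4}$ forced by $a_{3}$). The remaining eight statistics $\eta^{*}$ in $\A\setminus\A^{+}$ come from the global complement-and-flip map $\sigma\mapsto\sigma'$ of Definition~\ref{Def:eta}, not from a local row toggle. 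So the ``four commuting involutions'' picture has no evident grounding: the variation among the $\eta$'s lives in how \emph{quadruples} $(z,w,u,v)$ between equal-height columns are counted, not in a clean $2^{4}$ product of triple-type switches, and there is no reason to expect involutions handling these quadruple choices to commute or to leave the other choices undisturbed.

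The paper's actual argument is structurally different and you would need most of its machinery. It introduces two families of flip operators, the $\quinv$-flip $\rho_{i}^{r}$ and the $\A$-flip $\delta_{i}^{r}$ (Definition~\ref{Def:2}), and shows they move $\quinv$ and $\eta$ in lockstep (Lemma~\ref{L:top}). The proof then reduces first to rectangular shapes via the decomposition $\sigma=\sigma_{n}\sqcup\cdots\sqcup\sigma_{1}$ into maximal rectangles, and then inductively to two-row rectangles (Proposition~\ref{pro1A}). The two-row case is handled by a delicate analysis (Lemma~\ref{L:1}, Proposition~\ref{prop:thm9rec}) that separates descent and non-descent columns and uses the identity~(\ref{eqthm1.12}) expressing $\quinv-\eta$ as a signed sum over the $\tau_{i},\tau_{i}^{*}$; the bijection $\gamma$ is then built as an explicit word in the $\rho_{i}$'s and $\delta_{i}$'s rather than as a product of commuting toggles. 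Finally, the eight dual statistics $\eta^{*}$ are obtained in one stroke from the eight $\eta\in\A^{+}$ via Corollary~\ref{cor:10}, using $\sigma\mapsto\sigma'$ together with Theorem~\ref{T:3}. None of these reductions is visible in your outline, and without them the verification you describe in Step~3(c) --- that a toggle changes exactly one contribution while leaving the others intact --- has no mechanism by which it could succeed.
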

Building upon (\ref{E:mmp3}), we introduce canonical tableaux and dual canonical tableaux to present new compact formulas for the modified Macdonald polynomials in the second step.
\begin{theorem}\label{T:2}
		Let $\CMcal{C}_{\diamond}(\lambda)$ and $\CMcal{C}_{\dagger}(\lambda)$ be the sets of canonical and dual canonical tableaux of the Young diagram of $\lambda$, respectively. Then for $\varepsilon\in \{\diamond,\dagger\}$ and four statistics $\eta$ of $\A$, we have
	\begin{align}\label{eq10}
	\tilde{H}_{\lambda}(X;q,t)&=\sum_{\sigma\in\CMcal{C}_{\varepsilon}(\lambda)}d_{\varepsilon}(\sigma)x^{\sigma}q^{\maj(\sigma)}t^{\eta(\sigma)},
	\end{align}
	where $d_{\varepsilon}(\sigma)$ is a product of $t$-multinomials. 
\end{theorem}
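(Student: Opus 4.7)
The plan is to derive Theorem \ref{T:2} from the new combinatorial formula (\ref{E:mmp3}) in Theorem \ref{T:4} by partitioning $\T(\lambda)$ into orbits each indexed by a unique canonical (or dual canonical) representative, and then showing that within each orbit the summand of (\ref{E:mmp3}) factors as a constant monomial and major-index contribution times a $t$-generating series in $\eta$ that evaluates to a product of $t$-multinomials. The first step is to specify $\CMcal{C}_{\circ}(\lambda)$ and $\CMcal{C}_{\dagger}(\lambda)$ as the subsets of $\T(\lambda)$ cut out by dual local monotonicity conditions on certain natural cell groups (by analogy with the sorted tableaux in (\ref{E:sorted}) but with a finer grouping), and then verify that these conditions select exactly one representative inside each orbit of the group action that freely permutes entries within the groups.

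Given such an orbit $\Omega(\sigma)$ with $\sigma \in \CMcal{C}_{\varepsilon}(\lambda)$, I would check two invariance properties. First, $x^{\tau}$ is constant on $\Omega(\sigma)$ because the orbit permutes entries inside groups without altering the total multiset of entries. Second, $\maj(\tau)$ is constant on $\Omega(\sigma)$ because the major index reads descents between consecutive rows, and the orbit-defining permutations are designed to preserve the column-by-column comparison patterns contributing to these descents. These invariances together give
\begin{equation*}
\sum_{\tau \in \Omega(\sigma)} x^{\tau} q^{\maj(\tau)} t^{\eta(\tau)} = x^{\sigma} q^{\maj(\sigma)} \sum_{\tau \in \Omega(\sigma)} t^{\eta(\tau)},
\end{equation*}
so the remaining task reduces to evaluating the inner $t$-series.

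The heart of the argument is to show that, for each of the four admissible statistics $\eta \in \A$ attached to a given $\varepsilon$, one has $\sum_{\tau \in \Omega(\sigma)} t^{\eta(\tau)} = d_{\varepsilon}(\sigma)$ as a product of $t$-multinomials. I would accomplish this by exhibiting a decomposition of $\Omega(\sigma)$ as a direct product of sub-orbits, one per cell group, on each of which $\eta$ restricts to a local inversion-type statistic; the classical identity
\begin{equation*}
\sum_{w} t^{\inv(w)} = \binom{|\mu|}{\mu_1, \mu_2, \ldots}_t
\end{equation*}
(summed over rearrangements of a multiset of composition $\mu$) then yields a Gaussian multinomial for each factor, whose product over groups is precisely $d_{\varepsilon}(\sigma)$.

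The main obstacle will be isolating the correct four statistics of $\A$ compatible with such a factorization. I expect only those $\eta$ whose contribution at a given cell depends solely on entries within a single cell group (and not on entries in other groups) to yield a clean product form; the remaining statistics of $\A$ should exhibit cross-group interactions that prevent a multinomial-product expression. Verifying compatibility for all four admissible $\eta$ and both $\varepsilon \in \{\circ, \dagger\}$ calls for a careful cell-level analysis of how $\eta$ transforms under swapping two entries within a group, ensuring that the change is both local and multiplicatively compatible with the claimed Gaussian coefficients. Once compatibility is established, summing the factored expression over $\sigma \in \CMcal{C}_{\varepsilon}(\lambda)$ yields (\ref{eq10}).
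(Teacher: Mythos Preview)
Your proposal is correct and follows essentially the same route as the paper: the classes $\Omega(\sigma)=\CMcal{G}(\sigma)$ are defined so that $\T(\lambda)=\dot\bigcup_{\sigma\in\CMcal{C}_\varepsilon(\lambda)}\CMcal{G}(\sigma)$, both $x^\tau$ and $\maj(\tau)$ are constant on each class, and for the four admissible $\eta$ (singled out by a condition on $\S\cap a_1$ or $\S\cap a_2$) the difference $\eta(\tau)-\eta(\sigma)$ reduces to a sum of lengths $L_p(w)$ on neutral and (non-)descent blocks, whence the $t$-sum over $\CMcal{G}(\sigma)$ is $t^{\eta(\sigma)}$ times a product of Gaussian multinomials. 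The one point where your sketch is slightly oversimplified is the ``direct product of free permutations on cell groups'': in the paper the classes are built recursively row by row via flip operators $\delta_{\tilde w}^r$ that can cascade into lower rows, and the permutations of the (non-)descent block are constrained to keep it a (non-)descent block; what makes your factorization survive is Lemma~\ref{L:delta}, which guarantees that despite the cascading, $\maj$ and $\eta_\S$ change only through rows $r$ and $r+1$.
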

Here the canonical fillings are defined as special sorted tableaux appearing in \cite{CHO22,O:23}. An advantage to making use of canonical or dual canonical tableaux lies in the fact that (\ref{eq10}) can be further used to generate computable monomial expansions of the modified Macdonald polynomials. One of them is consistent with the formula given by Garbali and Wheeler \cite{GW20}. 

Before we state the explicit expressions of the modified Macdonald polynomials, let us first review relevant notations from \cite{GW20}. The $q$-analogue of the binomial coefficients is defined as
\begin{align*}
{n\brack k}_q=\frac{[n]_q!}{[k]_q![n-k]_q!},
\end{align*}
for $n\ge k\ge 0$, where $[n]_q=1+q+\cdots+q^{n-1}=(1-q^n)/(1-q)$ for $n\ge 1$, $[0]!_q=[0]_q=1$ and $[n]!_q=[n]_q[n-1]_q\cdots [1]_q$. For any sequences of nonnegative integers $\nu=(\nu^1 \leq \cdots \leq \nu^N)$  and $\tilde{\nu}=(\tilde{\nu}^1 \leq \cdots \leq \tilde{\nu}^N)$ such that $\tilde{\nu}^N=\nu^N$, we define 
\begin{align}\label{eq5}
\Phi_{\nu| \tilde{\nu}}(\xi,z;t)&=\sum_{s}t^{\xi(s,\tilde{\nu})} \prod_{1\le k<N} z^{ s_{N}^{k}-s_{k}^{k} }
\begin{bmatrix} \tilde{\nu}^{k+1}-s_{k+1}^{1,k} \\ \tilde{\nu}^{k}-s_{k}^{1,k} \end{bmatrix}_{t} \prod_{1\le i\leq k} \begin{bmatrix} s_{k+1}^{i} \\ s_{k}^{i} \end{bmatrix}_{t}\in\mathbb{N}[z,t],
\end{align}
where the sum is ranged over all sequences $\{s_k^i \}_{1\leq i\leq k\leq N}$ satisfying
\begin{align}\label{E:defs}
0\leq s_{k}^{k}\leq s_{k+1}^{k}\leq \cdots \leq s_{N-1}^{k}\leq s_{N}^{k}=\nu^{k}-\nu^{k-1},
\end{align}
with $s_{i}^{j,k}=\sum_{a=j}^{k}s_{i}^{a}$. Analogously, define
\begin{align}\label{eq5d}
\phi_{\nu| \tilde{\nu}}(\xi,z;t)&=\sum_{s}t^{\xi(s,\tilde{\nu})} \prod_{0\le k<N} z^{ s_{N}^{k}-s_{k}^{k} }
\begin{bmatrix} \tilde{\nu}^{k+1}-s_{k+1}^{0,k} \\ \tilde{\nu}^{k}-s_{k}^{0,k} \end{bmatrix}_{t} \prod_{0\le i\leq k} \begin{bmatrix} s_{k+1}^{i} \\ s_{k}^{i} \end{bmatrix}_{t}\in\mathbb{N}[z,t],
\end{align}
which is summed over all sequences $\{s_k^i \}_{0\le i<N,i\leq k\leq N}$ such that
\begin{align}\label{E:defs2}
0=s_0^0\leq s_{k}^{k}\leq s_{k+1}^{k}\leq \cdots \leq s_{N-1}^{k}\leq s_{N}^{k}=\nu^{k+1}-\nu^{k},
\end{align}
with $s_{i}^{j,k}=\sum_{a=j}^{k}s_{i}^{a}$. For $\gamma\in \{0,1\}$, let
\begin{align*}
\xi_{\gamma}(s,\tilde{\nu})&=\sum_{\gamma\leq i\leq k<N} (s_{k+1}^{i}-s_{k}^{i})  (\tilde{\nu}^{k}-s_{k}^{i,k}).
\end{align*}
Finally for any partition $\lambda$, let $\lambda'$ be the conjugate of $\lambda$, that is, 
the Young diagram of $\lambda'$ is obtained by interchanging rows and columns of the Young diagram of $\lambda$. Let 
\begin{align*}
n(\lambda)=\sum_{i\ge 1}\binom{\lambda_i'}{2}=\sum_{i\ge 1}(i-1)\lambda_i.
\end{align*}
\begin{theorem}\label{T:mono1}
The modified Macdonald polynomial $\tilde{H}_{\lambda}(X;q,t)$ equals
	\begin{align}\label{eq61}
	\tilde{H}_{\lambda}(X;q,t)
	&=t^{n(\lambda')} \sum_{\mu}\CMcal{P}_{\lambda\mu}(q,t^{-1})m_{\mu}(X) \quad\mbox{ where }\\
	\label{eq6}\CMcal{P}_{\lambda\mu}(q,t)&=\sum_{\{\nu \}}t^{\chi_1(\nu)}  \prod_{1\leq i\leq j\leq n} \Phi_{\nu_{i+1,j}| \nu_{i,j}}  (\xi_1,q^{j-i}t^{\lambda_{i}-\lambda_{j}}; t),\\
	\label{eq62}&=\sum_{\{\nu \}}t^{\chi_2(\nu)}  \prod_{1\leq i\leq j\leq n} \Phi_{\nu_{i+1,j}| \nu_{i,j}}  (\xi_1,q^{i}t^{\lambda_{j-i+1}-\lambda_j}; t),\\
	\label{eq63}&=\sum_{\{\nu \}}q^{n(\lambda)}t^{n(\lambda')-\chi_3(\nu)}  \prod_{1\leq i\leq j\leq n} \phi_{\nu_{i+1,j}| \nu_{i,j}}  (\xi_0,q^{i-j}t^{\lambda_{j}-\lambda_i}; t^{-1}),\\
	\label{eq64}&=\sum_{\{\nu \}}q^{n(\lambda)}t^{n(\lambda')-\chi_4(\nu)}  \prod_{1\leq i\leq j\leq n} \phi_{\nu_{i+1,j}| \nu_{i,j}}  (\xi_0,q^{-i}t^{\lambda_{j}-\lambda_{j-i+1}}; t^{-1}).
	\end{align}
	with the sum running over sequences $\nu_{i,j}=(\nu_{i,j}^{1} \le \cdots \le \nu_{i,j}^{N})$ of nonnegative integers, $1\le i\le j\le n=\ell(\lambda)$, $N=\ell(\mu)$, subject to the conditions
	\begin{align}
	\label{eq7}\nu_{i,j}^{N}=\lambda_{j}-\lambda_{j+1}, &\qquad \textrm{ for all }\,\,1\le i\le j\le n,   \\ 
	\label{eq8}\nu_{i+1,i}^{k}=0,     &\qquad \textrm{ for all }\,\, 1\le i\le n, 1\le k\le N, \\
	\label{eq9}\sum_{1\le i\le j\le n} \nu_{i,j}^{k}=\mu_1+\cdots+\mu_k, &\qquad \textrm{ for all }\,\, 1\le k\le N,
	\end{align}
	where $\chi_i(\nu)$ for $1\le i\le 4$ are defined as
	\begin{align*}
	\chi_1(\nu)&=\sum_{k=1}^{N}\sum_{1\le i\le j\le n} \bigg( \binom{\nu_{i,j}^{k}-\nu_{i,j}^{k-1}}{2}+ \sum_{\ell>j}(\nu_{i,j}^{k}-\nu_{i,j}^{k-1})(\nu_{i,\ell}^{k}-\nu_{i+1,\ell}^{k-1}) \bigg),\\
	\chi_2(\nu)&=\sum_{k=1}^{N}\sum_{1\le i\le j\le n} \bigg( \binom{\nu_{i,j}^{k}-\nu_{i,j}^{k-1}}{2}+ \sum_{\ell>j}(\nu_{i,j}^{k}-\nu_{i,j}^{k-1})(\nu_{\ell-j+i,\ell}^{k}-\nu_{\ell-j+i+1,\ell}^{k-1}) \bigg),\\
	\chi_3(\nu)&=\sum_{k=1}^{N}\sum_{1\le i\le j\le n} \sum_{\ell>j}(\nu_{i,j}^{k}-\nu_{i,j}^{k-1})(\nu_{i,\ell}^{k-1}-\nu_{i+1,\ell}^{k}),\\
	\chi_4(\nu)&=\sum_{k=1}^{N}\sum_{1\le i\le j\le n} \sum_{\ell>j}(\nu_{i,j}^{k}-\nu_{i,j}^{k-1})(\nu_{\ell-j+i,\ell}^{k-1}-\nu_{\ell-j+i+1,\ell}^{k}).
	\end{align*}
\end{theorem}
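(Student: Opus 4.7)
The plan is to derive all four formulas (\ref{eq6})--(\ref{eq64}) from the compact expressions of Theorem \ref{T:2}, by expanding both sides in the monomial basis $\{m_\mu\}$ and parametrising the (dual) canonical tableaux of a fixed content $\mu$ by the arrays $\{\nu_{i,j}\}_{1\le i\le j\le n}$ appearing in (\ref{eq7})--(\ref{eq9}). The four formulas will correspond respectively to the two choices of $\varepsilon\in\{\circ,\dagger\}$ in Theorem \ref{T:2} (canonical vs.\ dual canonical, producing $\Phi$ vs.\ $\phi$) combined with two of the four statistics $\eta\in\A$ allowed by Theorem \ref{T:2} (yielding the two $\chi$-statistics on each side, and the two different specialisations of the argument $z$). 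Since $\tilde{H}_\lambda(X;q,t)$ is symmetric in $X$, its coefficient of $m_\mu(X)$ equals the coefficient of $x^\mu=x_1^{\mu_1}x_2^{\mu_2}\cdots$ in the right-hand side of (\ref{eq10}), reducing the problem to computing
\[
  \sum_{\sigma} d_\varepsilon(\sigma)\,q^{\maj(\sigma)}\,t^{\eta(\sigma)},
\]
summed over $\sigma\in\CMcal{C}_\varepsilon(\lambda)$ with content exactly $\mu$.

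First I would set up the parametrisation. A canonical filling is sorted column-wise within each block of columns of equal height; modulo the canonical reorderings counted by $d_\varepsilon(\sigma)$, it is determined by the multiset of row-entries in each such block. For $1\le i\le j\le n=\ell(\lambda)$ and $1\le k\le N:=\ell(\mu)$, let
\[
  \nu_{i,j}^{k} = \#\bigl\{\text{entries}\le k\text{ in row } i \text{ among the columns of height exactly } j\text{ of }\sigma\bigr\}.
\]
The boundary equation (\ref{eq7}) then records the total number of columns of height $j$, condition (\ref{eq8}) vanishes the empty ``lower rows'' of shorter columns, and (\ref{eq9}) encodes that the global content of $\sigma$ equals $\mu$. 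The dual canonical case is treated identically but allows the additional index $i=0$, matching the range $0\le i\le k$ in (\ref{eq5d}). Conversely, every array satisfying (\ref{eq7})--(\ref{eq9}) corresponds to a unique canonical filling up to the reorderings already absorbed into $d_\varepsilon(\sigma)$.

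Next I would localise the statistics. The key claim is that $\maj(\sigma)$ and each permitted $\eta\in\A$ decompose cell-pairwise into (i) intra-block contributions, coming from pairs of cells lying in the same column-height class, and (ii) inter-block contributions from pairs whose column-heights differ. When summed over canonical column-reorderings with weight $d_\varepsilon(\sigma)$, the intra-block piece attached to the pair of adjacent rows $i,i+1$ inside the height-$j$ block produces precisely the generating function $\Phi_{\nu_{i+1,j}|\nu_{i,j}}(\xi_1,z;t)$ (respectively $\phi_{\nu_{i+1,j}|\nu_{i,j}}(\xi_0,z;t)$ in the dual case). The auxiliary sequences $\{s_k^i\}$ in (\ref{E:defs}) record how the $\nu_{i,j}^k-\nu_{i,j}^{k-1}$ letters of value $k$ are distributed among the ordered columns of the block, and the inner $q$-binomials arise from shuffling successive value-layers. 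The specialisation $z=q^{j-i}t^{\lambda_i-\lambda_j}$ in (\ref{eq6}) encodes the inter-block shift contributed by taller columns lying to the right of the current block under one indexing convention; the alternative convention of labelling rows by their distance from the top of the column produces $z=q^it^{\lambda_{j-i+1}-\lambda_j}$, giving (\ref{eq62}). The remaining inter-block contributions assemble into $\chi_1(\nu)$ in (\ref{eq6}) and $\chi_2(\nu)$ in (\ref{eq62}); for the dual canonical formulas the analogous bookkeeping together with the substitution $t\mapsto t^{-1}$ and the normalising factor $q^{n(\lambda)}t^{n(\lambda')}$ yields (\ref{eq63}) and (\ref{eq64}) with $\chi_3,\chi_4$. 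The overall prefactor $t^{n(\lambda')}$ and the substitution $t\to t^{-1}$ in (\ref{eq61}) reflect a single identity of the form $\maj(\sigma)+\eta(\sigma)+\eta^\ast(\sigma)=n(\lambda')$ between any $\eta\in\A$ and its complementary statistic valid on the full diagram.

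The hard part will be the cell-by-cell verification in the localisation step: one must check that each pair/triple relevant to $\maj$ or to the chosen $\eta\in\A$ is attributed correctly either to the intra-block $\Phi$/$\phi$-factor or to the outer statistic $\chi_i$, and that the exponents of $q$ and $t$ match on both sides. This boils down to a careful case analysis of the attacking/reading pairs of the statistics in $\A$ according to their relative column-heights, together with the identification of the inner $q$-binomial products with multinomial expansions counting canonical orderings of identical columns. The remaining ingredients—extraction of the monomial coefficient, the parametrisation bijection, and the arithmetic of the normalising prefactor—are then essentially routine once the localisation is in place.
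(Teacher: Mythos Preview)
Your overall strategy---start from the compact formula of Theorem~\ref{T:2}, extract the coefficient of $x^\mu$, parametrise (dual) canonical tableaux by the arrays $\{\nu_{i,j}\}$, and split the statistics into intra-block pieces (absorbed into $\Phi$ or $\phi$) and inter-block pieces (giving $\chi_i$)---is exactly the route the paper takes in Section~\ref{S:thm41}. So the architecture is right. But several of the concrete attributions you make are off and would derail the computation if followed literally.

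First, for the canonical case (formulas (\ref{eq6}) and (\ref{eq62})) the paper indexes $\nu_{i,j}^k$ by \emph{large} values: $\nu_{i,j}^k-\nu_{i,j}^{k-1}$ counts the occurrences of $N+1-k$ in row $i$ of the height-$j$ block, not of $k$. Your convention $\nu_{i,j}^k=\#\{\text{entries}\le k\}$ is the one the paper uses only for the \emph{dual} canonical case (formulas (\ref{eq63}) and (\ref{eq64})); mixing them up will not reproduce the stated $\Phi$-factors. Second, your identity ``$\maj(\sigma)+\eta(\sigma)+\eta^*(\sigma)=n(\lambda')$'' is incorrect: the relation the paper uses (equation (\ref{E:nlambda})) is $\bar\eta(\sigma)+\eta(\sigma)=n(\lambda')$, where $\bar\eta$ counts the non-$\S$-quadruples and non-$\S$-triples; $\maj$ plays no role in this identity, and the prefactor $t^{n(\lambda')}$ together with the substitution $t\mapsto t^{-1}$ in (\ref{eq61}) comes from passing from $\eta$ to $\bar\eta$. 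Third, your reading of the specialisation $z=q^{j-i}t^{\lambda_i-\lambda_j}$ is inverted: the exponent $j-i$ of $q$ is the \emph{intra}-block $\maj$ contribution (it is $\mathsf{leg}+1$ for a descent in $\sigma_j\vert_i^{i+1}$, cf.\ (\ref{E:majsigma1})), while the exponent $\lambda_i-\lambda_j$ of $t$ counts boxes in \emph{shorter} columns to the right (equation (\ref{E:gwsigma2})), not taller columns. Finally, the auxiliary sequences $\{s_k^h\}$ are not merely a record of how letters are distributed among columns: they encode the descent structure of each pair of adjacent rows (see the list (1)--(6) in the proof of Theorem~\ref{T:count2}), and Lemma~\ref{L:sigmaj} is needed to show that $(\nu,s)$ bijectively parametrises canonical tableaux precisely when the product (\ref{E:coeffd2}) is nonzero. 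With these corrections your plan becomes the paper's proof.
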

The manifestly positive expression (\ref{eq6}) was first found by Garbali and Wheeler \cite{GW20} and the formulas (\ref{eq6})--(\ref{eq64}) are distinct; see Examples \ref{Example:1} and \ref{Example:2}.
\begin{example}\label{Example:1}
	Let $\lambda=(3,2)$, $\mu=(4,1)$, then $\lambda'=(2,2,1)$ and there are three choices for $\{\nu\}$.
	\begin{align*}
	\Omega_1&=\binom{\nu_{1,1}=(0,1)} {\nu_{1,2}=(2,2) ,\,\nu_{2,2}=(2,2)},\\
	\Omega_2&=\binom{\nu_{1,1}=(1,1)} {\nu_{1,2}=(1,2) ,\,\nu_{2,2}=(2,2)},\\
	\Omega_3&=\binom{\nu_{1,1}=(1,1)} {\nu_{1,2}=(2,2) ,\,\nu_{2,2}=(1,2)}.
	\end{align*}
	For simplicity, we write $\Phi_{\nu\vert \tilde{\nu}}(z;t)=\Phi_{\nu\vert \tilde{\nu}}(\xi_1,z;t)$. It follows from \eqref{eq6} that $\CMcal{P}_{\lambda\mu}(q,t)$ equals the sum of 
	\begin{align*}
	&t^{\chi_1(\Omega_1)}\Phi_{(0,0) | (0,1)}(1;t) \  \Phi_{(2,2) | (2,2)}(qt;t) \  \Phi_{(0,0) | (2,2)}(1;t)=t^{2}, \\
	&t^{\chi_1(\Omega_2)}\Phi_{(0,0) | (1,1)}(1; t) \  \Phi_{(2,2) | (1,2)}(qt;t) \  \Phi_{(0,0) | (2,2)}(1;t)=t^{2}qt(1+t), \\
	&t^{\chi_1(\Omega_3)}\Phi_{(0,0) | (1,1)}(1; t) \  \Phi_{(1,2) | (2,2)}(qt;t) \  \Phi_{(0,0) | (1,2)}(1;t)=t^{3}(1+t).
	\end{align*}
	On the other hand, according to \eqref{eq62}, $\CMcal{P}_{\lambda\mu}(q,t)$ is the sum of
	\begin{align*}
	&t^{\chi_2(\Omega_1)}\Phi_{(0,0) | (0,1)}(q;t) \  \Phi_{(2,2) | (2,2)}(q;t) \  \Phi_{(0,0) | (2,2)}(q^2t;t)=t^{4},\\
	&t^{\chi_2(\Omega_2)}\Phi_{(0,0) | (1,1)}(q;t) \  \Phi_{(2,2) | (1,2)}(q;t) \  \Phi_{(0,0) | (2,2)}(q^2t;t)=t^{3}q(1+t),\\
	&t^{\chi_2(\Omega_3)}\Phi_{(0,0) | (1,1)}(q;t) \  \Phi_{(1,2) | (2,2)}(q;t) \  \Phi_{(0,0) | (1,2)}(q^2t;t)=t^{2}(1+t).
	\end{align*}
	It is evident that the summands are different.
\end{example}
\begin{example}\label{Example:2}
	Let $\lambda=\mu=(4,2)$, then $\lambda'=(2,2,1,1)$, $n(\lambda)=2$ and $n(\lambda')=7$. There are six possible choices for $\{\nu \}$:
	\begin{align*}
	\Omega_1&=\binom{\nu_{1,1}=(0,2)} {\nu_{1,2}=(2,2) ,\nu_{2,2}=(2,2)}, \,\,\,\,     \Omega_4=\binom{\nu_{1,1}=(2,2)} {\nu_{1,2}=(2,2) ,\nu_{2,2}=(0,2)},\\
	\Omega_2&=\binom{\nu_{1,1}=(1,2)} {\nu_{1,2}=(1,2) ,\nu_{2,2}=(2,2)}, \,\,\,\,            \Omega_5=\binom{\nu_{1,1}=(2,2)} {\nu_{1,2}=(0,2) ,\nu_{2,2}=(2,2)},\\
	\Omega_3&=\binom{\nu_{1,1}=(1,2)} {\nu_{1,2}=(2,2) ,\nu_{2,2}=(1,2)}, \,\,\,\,       \Omega_6=\binom{\nu_{1,1}=(2,2)} {\nu_{1,2}=(1,2) ,\nu_{2,2}=(1,2)}.
	\end{align*}
	We simply write $\phi_{\nu\vert \tilde{\nu}}(z;t)=\phi_{\nu\vert \tilde{\nu}}(\xi_0,z;t)$.
	By \eqref{eq63}, $\CMcal{P}_{\lambda\mu}(q^{-1},t^{-1})$ equals the sum of
	\begin{align*}
	&q^{-2} t^{\chi_3(\Omega_1)-7}\phi_{(0,0)| (0,2)}  (1;t) \,\phi_{(2,2)|(2,2)}(qt^{2};t) \,\phi_{(0,0)| (2,2)}(1;t)=t^{-3},\\
	&q^{-2} t^{\chi_3(\Omega_2)-7}\phi_{(0,0)| (1,2)}  (1;t)  \,\phi_{(2,2)|(1,2)}(qt^{2};t) \,\phi_{(0,0)| (2,2)}(1;t)=t^{-4}(1+t^{-1})^2,\\
	&q^{-2} t^{\chi_3(\Omega_3)-7}\phi_{(0,0)| (1,2)}  (1;t)  \, \phi_{(1,2)|(2,2)}(qt^{2};t) \,\phi_{(0,0)| (1,2)}(1;t)=q^{-1}t^{-4}(1+t^{-1})^2,\\
	&q^{-2} t^{\chi_3(\Omega_4)-7}\phi_{(0,0)| (2,2)}  (1;t)  \, \phi_{(0,2)|(2,2)}(qt^{2};t) \,\phi_{(0,0)| (0,2)}(1;t)=q^{-2} t^{-7},\\
	&q^{-2} t^{\chi_3(\Omega_5)-7}\phi_{(0,0)| (2,2)}  (1;t)  \, \phi_{(2,2)|(0,2)}(qt^{2};t) \,\phi_{(0,0)| (2,2)}(1;t)=t^{-7},\\
	&q^{-2} t^{\chi_3(\Omega_6)-7}\phi_{(0,0)| (2,2)}  (1;t)  \,\phi_{(1,2)|(1,2)}(qt^{2};t) \,\phi_{(0,0)| (1,2)}(1;t)=t^{-3}(1+q^{-1}t^{-3})(1+t^{-1}).
	\end{align*}
	Furthermore, \eqref{eq64} reads that $\CMcal{P}_{\lambda\mu}(q^{-1},t^{-1})$ is the sum of
	\begin{align*}
	&q^{-2} t^{\chi_4(\Omega_1)-7}\phi_{(0,0)| (0,2)} (q;t)\,\phi_{(2,2)|(2,2)}(q;t) \,\phi_{(0,0)| (2,2)}(q^{2}t^{2};t)=t^{-3},\\
	&q^{-2} t^{\chi_4(\Omega_2)-7}\phi_{(0,0)| (1,2)} (q;t)\,\phi_{(2,2)|(1,2)}(q;t) \,\phi_{(0,0)| (2,2)}(q^{2}t^{2};t)=t^{-3}(1+t^{-1})^2,\\
	&q^{-2} t^{\chi_4(\Omega_3)-7}\phi_{(0,0)| (1,2)}(q;t)\, \phi_{(1,2)|(2,2)}(q;t)\,\phi_{(0,0)| (1,2)}(q^{2}t^{2};t)=q^{-1}t^{-4}(1+t^{-1})^2,\\
	&q^{-2} t^{\chi_4(\Omega_4)-7}\phi_{(0,0)| (2,2)} (q;t)\,
	\phi_{(0,2)|(2,2)}(q;t)\,\phi_{(0,0)| (0,2)}(q^{2}t^{2};t)=q^{-2} t^{-7},\\
	&q^{-2} t^{\chi_4(\Omega_5)-7}\phi_{(0,0)| (2,2)} (q;t) \,\phi_{(2,2)|(0,2)}(q;t)\,  \phi_{(0,0)| (2,2)}(q^{2}t^{2};t)=t^{-7},\\
	&q^{-2} t^{\chi_4(\Omega_6)-7}\phi_{(0,0)| (2,2)} (q;t)\,\phi_{(1,2)|(1,2)}(q;t)\, \phi_{(0,0)| (1,2)}(q^{2}t^{2};t)=t^{-5}(1+q^{-1}t^{-1})(1+t^{-1}).
	\end{align*}
   Clearly the summands are not equal termwise.
\end{example}

The rest of the paper is organized as follows: In Section \ref{S:2} we provide preliminaries on the modified Macdonald polynomials and combinatorial statistics $\inv,\quinv,\maj$ on fillings. The family $\A$ of new statistics is introduced in Section \ref{S:familyA}.
In Sections \ref{S:3} and \ref{S:5}, the flip operators and the proof of Theorem \ref{T:4} are presented. Sections \ref{S:can}, \ref{S:thm3} and \ref{S:thm41} are contributed to bringing in (dual) canonical tableaux and completing the proof of Theorems \ref{T:2} and \ref{T:mono1}. Finally we make some remarks in Section \ref{S:final_remark}.

\section{Preliminaries and notations}\label{S:2}

A partition $\lambda=(\lambda_1,\cdots,\lambda_k)$ of $n$ is sequence of positive integers such that $\lambda_i\ge \lambda_{i+1}$ for all $1\le i<k$ and $|\lambda|=\lambda_1+\cdots+\lambda_k=n$. Each $\lambda_i$ is called a part of $\lambda$ and $k$ is the length of $\lambda$, denoted by $\ell(\lambda)$. 

The Young diagram of $\lambda$, denoted by $\mathsf{dg}(\lambda)$, is an array of boxes with $\lambda_i$ boxes in the $i$th row from bottom to top, with the first box in each row left-justified. A box has coordinate $(i,j)$ if it is in the $i$th row from bottom to top and $j$th column from left to right. Let $\lambda'$ be the conjugate (or transpose) of $\lambda$, that is, $\dg(\lambda')$ is obtained from $\dg(\lambda)$ by reflecting across the main diagonal (boxes with coordinate $(i,i)$).

Let $\Lambda$ be the algebra of symmetric functions in infinitely many variables $X=(x_1,x_2,\ldots)$, with coefficients from $\mathbb{Q}(q,t)$. All bases of $\Lambda$ are indexed by partitions, such as the {\em monomial symmetric functions} $m_{\mu}(X)$, the {\em elementary symmetric functions} $e_{\mu}(X)$, the {\em complete homogeneous symmetric functions} $h_{\mu}(X)$, the {\em power-sum symmetric functions} $p_{\mu}(X)$, and the {\em Schur functions} $s_{\mu}(X)$; for instance see the books \cite{Hag08,Mac95, Stanley}. 

Define the scalar product as follows:
\begin{align*}
\langle p_{\lambda},p_{\mu}\rangle=z_{\lambda}\delta_{\lambda\mu}\prod_{i=1}^{\ell(\lambda)}\frac{1-q^{\lambda_i}}{1-t^{\lambda_i}},
\end{align*}
where $z_{\lambda}=1^{m_1}m_1!2^{m_2}m_2!\cdots$ if the part $i$ appears exactly $m_i$ times in $\lambda$. The {\em Macdonald polynomial} $P_{\lambda}(X,q,t)$ is defined as the unique symmetric function satisfying 
\begin{align*}
P_{\lambda}(X;q,t)=m_{\lambda}(X)+\sum_{\mu<\lambda}a_{\lambda\mu}(q,t)m_{\mu}(X)
\end{align*}
for some coefficients $a_{\lambda\mu}(q,t)\in\mathbb{Q}(q,t)$, and $\langle P_{\lambda},P_{\mu}\rangle=0$ if $\lambda\ne \mu$. Subsequently, Macdonald introduced the integral Macdonald polynomial
\begin{align*}
J_{\mu}(X;q,t)=\prod_{s\in\dg(\mu)}(1-q^{\mathsf{arm}(s)}t^{1+\mathsf{leg}(s)})
P_{\mu}(X;q,t),
\end{align*}
where $\mathsf{arm}(s)$ and $\mathsf{leg}(s)$ are the number of boxes strictly east of $s$ and north of $s$, respectively.
\begin{example}
 When $\mu=(2,2)$,
	\begin{align*}
	P_{22}(X;q,t)&=m_{22}(X)+\frac{(1+q)(1-t)}{1-qt}m_{211}(X)\\
	&\qquad+\frac{(2+t+3q+q^2+3qt+2q^2t)(1-t)^2}{(1-qt)(1-qt^2)}
	m_{1111}(X),\\
	J_{22}(X;q,t)&=(1-qt)(1-t)(1-qt^2)(1-t^2)P_{22}(X;q,t).
	\end{align*}
\end{example}

For any polynomial or formal series $A$, let $p_k[A]$ be obtained from $A$ by substituting each indeterminate $a_i$ in $A$ by $a_i^k$. For every $f\in \Lambda$, the {\em plethystic substitution} $f[A]$ is the result of writing $f$ as a polynomial in the power-sum symmetric functions $p_k$ and replacing $p_k$ by $p_k[A]$. For the particular case $s_{\mu}[X(1-t)]$, the Jacobi--Trudi identity says that 
\begin{align*}
s_{\mu}[X(1-t)]=\det(h_{\mu_i-i+j}[X(1-t)])_{i,j},
\end{align*}
where the power series of $h_r[X(1-t)]$ is given by
\begin{align*}
\sum_{r=0}^{\infty}h_r[X(1-t)]u^r=\prod_{i=1}^{\infty}\frac{1-tx_iu}{1-x_iu}.
\end{align*}
Since the Schur functions $s_{\lambda}[X(1-t)]$ form a basis of $\Lambda$, dual to the Schur basis $s_{\lambda}(X)$ (see (4.10), Chapter III of \cite{Mac95}), the expansion of $J_{\mu}(X;q,t)$ in terms of $s_{\lambda}[X(1-t)]$ gives
\begin{align*}
J_{\mu}(X;q,t)=\sum_{\lambda}K_{\lambda\mu}(q,t)s_{\lambda}[X(1-t)].
\end{align*}
Macdonald \cite{Mac88,Mac95} conjectured that $K_{\lambda\mu}(q,t)$ are polynomials in $q,t$ with nonnegative coefficients. This conjecture was resolved by Haiman \cite{Hag08,Haiman01} by showing that $\tilde{H}_{\lambda}(X;q,t)$ equals the Frobenius series of a space as the linear span of certain polynomials and their all partial derivatives \cite{Haiman01}. The {\em modified Macdonald polynomial} $\tilde{H}_{\lambda}(X;q,t)$ was introduced by Garsia and Haiman \cite{GH93,GH96}, defined as
\begin{align*}
\tilde{H}_{\mu}(X;q,t)
&=t^{n(\mu)} J_{\mu}\left[\frac{X}{1-t^{-1}};q,t^{-1}\right]\\
&=t^{n(\mu)}\sum_{\lambda}K_{\lambda\mu}(q,t^{-1})s_{\lambda}(X),\\
&=\sum_{\lambda}\tilde{K}_{\lambda\mu}(q,t)s_{\lambda}(X).
\end{align*}
Since $K_{\lambda\mu}(q,t)$ has degree at most $n(\mu)$ in $t$ by \cite[Chapter VI, (8.14)]{Mac95} and $K_{\lambda\mu}(q,t)\in\mathbb{N}[q,t]$, one immediately gets $\tilde{K}_{\lambda\mu}(q,t)\in\mathbb{N}[q,t]$. Various formulas for $K_{\lambda\mu}(q,t)$ and the integrality of its coefficients have been extensively investigated; see for instance \cite{GHT96,GR98,GT96,KN98,Knop97,Sahi96}.

\begin{example} For $\mu=(2,2)$, 
	\begin{align*}
	\tilde{H}_{22}(X;q,t)&=s_4(X)+(q+t+qt)s_{31}(X)+(t^2+q^2)s_{22}(X)\\
	&\qquad+(qt^2+q^2t+qt)s_{211}(X)+q^2t^2s_{1111}(X).
	\end{align*}
\end{example}
%The Hall scalar product is defined by
%\begin{align*}
%\langle m_{\mu},h_{\nu}\rangle=\delta_{\mu\nu},
%\end{align*}
%which leads to $\langle s_{\mu},s_{\nu}\rangle=\delta_{\mu\nu}$, that is, the Schur functions form the orthonormal basis of $\Lambda$ with respect to the Hall scalar product. The {\em dominance order} is a partial order on partitions, defined by $\mu\le \lambda$ if $\mu_1+\cdots+\mu_k\le \lambda_1+\cdots+\lambda_k$ for all $k$. 
%The Macdonald polynomials $\tilde{H}_{\mu}(X;q,t)$ are the unique basis of $\Lambda$ determined by the following triangularity (\ref{E:tri1})--(\ref{E:tri2}) and orthogonality axioms (\ref{E:ortho}); see \cite{Haiman99}.
%\begin{align}
%\label{E:tri1}\tilde{H}_{\mu}[X(1-q);q,t]&=\sum_{\lambda\ge \mu}a_{\lambda\mu}(q,t)s_{\lambda}(X),\\
%\label{E:tri2}\tilde{H}_{\mu}[X(1-t);q,t]&=\sum_{\lambda\ge \mu'}b_{\lambda\mu}(q,t)s_{\lambda}(X),\\
%\label{E:ortho}\langle\tilde{H}_{\mu},s_{(n)}\rangle&=1, 
%\end{align}
%for some $a_{\lambda\mu}(q,t), b_{\lambda\mu}(q,t)\in\mathbb{Q}(q,t)$ and $n=|\mu|$. When $\mu=(2,2)$, 

We next present the combinatorial formula of $\tilde{H}_{\mu}(X;q,t)$ \cite{HHL04}. Throughout the paper, we shall use $\# A$ to represent the cardinality of any finite set $A$ and denote $[n]=\{1,\ldots,n\}$.
We write $\chi(S)=1$ if the statement $S$ is true; otherwise $\chi(S)=0$.

A filling (or a tableau) of $\mathsf{dg}(\lambda)$ is a function $\sigma :\mathsf{dg}(\lambda)\rightarrow \mathbb{P}$ ($\mathbb{P}$ is the set of positive integers), which assigns each box $u$ of $\mathsf{dg}(\lambda)$ to a positive integer $\sigma(u)$. Let $\T(\lambda)$ denote the set of all fillings of $\mathsf{dg}(\lambda)$, and set
\begin{align*}
x^{\sigma}=\prod_{u\in \mathsf{dg}(\lambda)}x_{\sigma(u)}
\end{align*}
as the monomial of $\sigma$. We also use $\sigma(r,s)$ to refer to the entry of box with coordinate $(r,s)$.

Let $\South(u)$ be the box right below the cell $u$. 
Then the pair $(\sigma(u),\sigma(\South(u))$ of entries such that  $\sigma(u)>\sigma(\South(u))$ is a {\em descent} of a filling $\sigma$; otherwise, a {\em non-descent}. Sometimes we call the pair $(\sigma(u),\sigma(\South(u))$ {\em a column $(\sigma(u),\sigma(\South(u))$} for convenience.
Define 
\begin{align*}
\Des(\sigma)=\{u\in \mathsf{dg}(\lambda): \sigma(u)>\sigma(\South(u))\}
\end{align*}
to be the {\em descent set} of $\sigma$ and $\des(\sigma)=\#\Des(\sigma)$.
Let $\mathsf{leg}(u)$ be the number of boxes strictly above $u$ in its column, then
\begin{align*}
\maj(\sigma)=\sum_{u\in\Des(\sigma)}({\mathsf{leg}}(u)+1)
\end{align*}
is called {\em the major index} of $\sigma$.

\begin{definition}\cite{AMM23,CHO22} \label{Def:quinv} (queue inversion)
Given a filling $\sigma\in \T(\lambda) $, let
$\sigma(\lambda'_{i}+1,i)=0$ for all $i$, that is, adding a box with entry $0$ to the top of each column. A {\em queue inversion triple} of $\sigma$ is a triple $(a,b,c)$ of entries in $\sigma$ such that  (as shown in Figure \ref{F:f1} where $a$ could be zero)
\begin{enumerate}
	\item $b$ and $c$ are in the same row and $c$ is to the right of $b$;
	\item $a$ and $b$ are in the same column such that $b$ is right below $a$;
	\item one of the conditions $a<b<c$, $b<c<a$, $c<a<b$ or $a=b\ne c$ is true.
\end{enumerate}
Let $\quinv(\sigma)$ be the number of queue inversion triples of $\sigma$. 
\end{definition}
\begin{figure}[ht]
	\centering
	\includegraphics[scale=0.4]{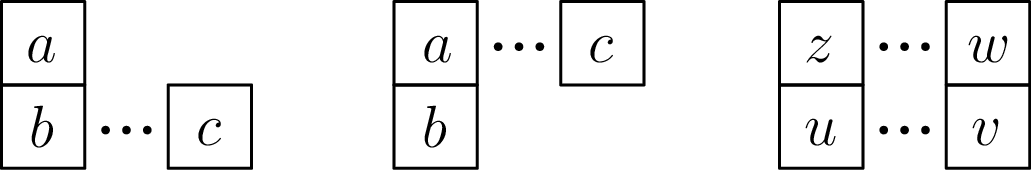}
	\caption{A queue inversion triple (left), an inversion triple (middle) and a quadruple (right).}\label{F:f1}
\end{figure}

%\begin{center}
%	\begin{minipage}[H]{0.2\linewidth}
%		\begin{ytableau}
%			a  &\none[] 
%			\\
%			b &\none[\dots] & c\\
%		\end{ytableau}
%	\end{minipage}
%	\begin{minipage}[H]{0.1\linewidth}
%		\begin{ytableau}
%			a  &\none[\dots] & c
%			\\
%			b 
%			&  \none[]\\
%		\end{ytableau}
%	\end{minipage}
%\end{center}
\begin{definition}\cite{HHL04} (inversion) \label{Def:inv}
Define $\sigma(0,i)=\infty$ for each column $i$ of $\sigma$, that is, adding a box with entry $\infty$ right below the bottommost box of each column of $\sigma$. An {\em inversion triple} of $\sigma$ is a triple $(a,b,c)$ of entries in $\sigma$ satisfying the above (2)--(3) and (4), as shown in Figure \ref{F:f1} where $b$ could be infinity.
\begin{enumerate}
	\setcounter{enumi}{3}
	\item $a$ and $c$ are in the same row and $c$ is to the right of $a$.
\end{enumerate}
Let $\inv(\sigma)$ be the number of inversion triples of $\sigma$. 

If $(a,b,c)$ forms a queue inversion triple or an inversion triple, then we write $\Q(a,b,c)=1$; otherwise $\Q(a,b,c)=0$.

\end{definition}
\begin{remark}\label{Rem:coinv}
The definitions of descent, the major index, queue inversion and inversion on tableaux naturally extend the classical permutation statistics. To be precise,
the {\em reading word} $\omega(\sigma)$ of a tableau $\sigma$ is the permutation that starts with entries of the top row from left to right, then the elements of the second top row from left to right, and so on. If $\lambda=(n)$ is a row, then $\omega(\sigma)$ for any $\sigma\in \T(\lambda)$ is a permutation of $[n]$, say $\omega(\sigma)=\sigma_1\cdots \sigma_n$, thus 
\begin{align*}
\inv(\sigma)=\inv(\omega(\sigma))=\#\{(i,j):\sigma_i>\sigma_j\,\textrm{ and }\, i<j\}.
\end{align*}
Similarly, $\quinv(\sigma)$ is the coinversion numbers of $\omega(\sigma)$, i.e.,
\begin{align*}
\quinv(\sigma)=\coinv(\omega(\sigma))=\#\{(i,j):\sigma_i<\sigma_j\,\textrm{ and }\, i<j\}.
\end{align*}
If $\lambda=(1^n)$ is a column, then it is easily seen that the number of descents of $\sigma$ for any $\sigma\in \T(\lambda)$ is the descents of $\omega(\sigma)$, 
that is,
\begin{align*}
\des(\sigma)=\des(\omega(\sigma))=\#\{i:\sigma_i>\sigma_{i+1}\,\textrm{ and }\, 1\le i<n\}.
\end{align*}
Therefore, $\maj(\sigma)=\maj(\omega(\sigma))$ as a sum of positions $i$ at which $\sigma_i>\sigma_{i+1}$ for $1\le i<n$. 
\end{remark}
\section{A family of new statistics}\label{S:familyA}
This section is contributed to introducing new statistics $\eta$. Since $\eta(\sigma)$ for any $\eta\in \A$ counts the quadruples $(z,w,u,v)$ of $\sigma$ subject to certain conditions on the total orderings of $z,w,u,v$, we first describe such requirement in Definition \ref{Def:typeqinv}.
\begin{definition}[a set $\S$ of quadruples]\label{Def:typeqinv}
Let $\S$ be a set of quadruples $(z,w,u,v)$ equipped with total orderings of the elements $z,w,u,v$, such that 
\begin{enumerate}
	\item For $z\ne w$, $(z,w,u,v)\in \S$ if and only if $(w,z,v,u)\in \S$.
	\item $u\ne v$ and exactly one of $(z,w,u,v)$ and $(z,w,v,u)$ belongs to $\S$.
	\item $(z,z,u,v)\in \S$ if and only if $(z,u,v)$ is a queue inversion triple, that is, $(z=w>v>u)$ or $(u\ge z=w>v)$ or $(v>u\ge z=w)$.
	\item Either both $(z,w,u,v)$ with $(z>u>v\ge w)$ and $(z,w,u,v)$ with $(v\ge z>w>u)$ belong to $\S$, or neither of them belongs to $\S$.
	\item $\S$ must contain $(z,w,u,v)$ with $(z>v\ge w>u)$ or $(u\ge z>v\ge w)$.
\end{enumerate}
Then $\S$ is called a {\em $\quinv$-quadruple set}. Since each quadruple of $\S$ is endowed with exactly one total ordering, we shall call an element of $\S$ a quadruple $(z,w,u,v)$ or a total ordering of $z,w,u,v$ interchangeably.
\end{definition}

\begin{example}\label{Example:S}
	For instance, the set $\S$ defined below is a $\quinv$-quadruple set.
	\begin{itemize}
		\item For $z=w$, $\S$ has $(z=w>v>u)$, $(u\ge z=w>v)$, $(v>u\ge z=w)$.
		\item For $z>w$, $\S$ contains $(z>u>v\ge w)$, $(z>v\ge w>u)$, $(v\ge z>w>u)$, $(u\ge z>v\ge w)$, $(v>u\ge z>w)$ and $(z>w>v>u)$.
		\item For $z<w$, $\S$ contains $(w>v>u\ge z)$, $(w>u\ge z>v)$, $(u\ge w>z>v)$, $(v\ge w>u\ge z)$, $(u>v\ge w>z)$ and $(w>z>u>v)$.
	\end{itemize}
	It is straightforward to check that $\S$ has all the properties $(1)$--$(5)$ of Definition \ref{Def:typeqinv}. 
\end{example}	
We next relate each $\quinv$-quadruple set $\S$ with a statistic $\eta$ on the set of tableaux, which turns out to play the same role as $\quinv$ or $\inv$ in the combinatorial formulas  (\ref{E:mmp1}) and (\ref{E:mmp2}) of the modified Macdonald polynomials.
\begin{definition}(a family $\A$ of statistics)\label{Def:eta}
Following Definition \ref{Def:quinv}, let
$\sigma(\lambda'_{i}+1,i)=0$ for all $i$ and $\sigma\in \T(\lambda)$.
Consider the quadruples $(z,w,u,v)$ of entries, located as the rightmost one of Figure \ref{F:f1}, such that the columns containing $(z,u)$ and $(w,v)$ are of equal height in $\sigma$.

For such quadruple $(z,w,u,v)$, define $\Q_{\S}(z,w,u,v)=1$ and call $(z,w,u,v)$ an {\em  $\S$-quadruple} if $(z,w,u,v)$ satisfies one of the total orderings in $\S$; otherwise $\Q_{\S}(z,w,u,v)=0$.
Let $\eta_{\S}:\T(\lambda)\rightarrow \mathbb{N}$ be the map (or statistic) defined by letting $\eta_{\S}(\sigma)$ count $\S$-quadruples $(z,w,u,v)$ of $\sigma$, that is,
\begin{align*}
\eta_{\S}(\sigma)=\sum_{(z,w,u,v)\in \sigma}\Q_{\S}(z,w,u,v).
\end{align*}
An {\em $\S$-triple} $(a,b,c)$ is a queue inversion triple of $\sigma$ such that the column of $(a,b)$ is longer than the column of $c$. We write $\Q_{\S}(a,b,c)=1$ if $(a,b,c)$ is an $\S$-triple; otherwise $\Q_{\S}(a,b,c)=0$. Let
\begin{align}\label{E:etai}
\eta(\sigma)=\eta_{\S}(\sigma)+\sum_{(a,b,c)\in\sigma}\Q_{\S}(a,b,c),
\end{align}
and define $\A^+$ to be the set of statistics $\eta$ for all $\quinv$-quadruple sets $\S$. For each $\eta\in \A^+$, we are going to define its counterpart $\eta^*$.

Each Young diagram of a partition $\lambda$ can be regarded as a concatenation of
maximal rectangles in a way that the heights of rectangles are strictly decreasing from left to right \cite{CHO22}. For every $\sigma\in\T(\lambda)$, let $\sigma_i$ be the filling of the rectangle of $\dg(\lambda)$ with height $i$, and write 
\begin{align}\label{E:sigmadec}
\sigma=\sigma_n \sqcup \cdots \sqcup \sigma_1,
\end{align}
where $n=\ell(\lambda)$ and $\sigma_j$ could be empty. Let $N$ be the largest entry of $\sigma$. For any $1\le j\le n$, let $\sigma_j'$ be obtained from $\sigma_j$ by replacing every entry $x$ by $x^c=N+1-x$ and vertically flipping it upside down. Define $\sigma'=\sigma_n' \sqcup \cdots \sqcup \sigma_1'$ where $\sigma'(0,i)=\infty$ for all $i$ and let
\begin{align}\label{E:eta*}
\eta^*(\sigma')=\eta_{\S}(\sigma)+\sum_{(a,b,c)\in\sigma'}\Q^*_{\S}(a,b,c),
\end{align}
where $\Q^*_{\S}(a,b,c)=1$ if $(a,b,c)$ is an inversion triple of $\sigma'$ and the column of $(a,b)$ is longer than the column of $c$; otherwise $\Q^*_{\S}(a,b,c)=0$. Finally set
\begin{align*}
 \A=\A^+\,\dot\cup\,\{\eta^*:\eta\in\A^+\}.
 \end{align*}
\end{definition}
\begin{remark}\label{Rem:3}
	As direct consequences of Definition \ref{Def:eta}, $\CMcal{Q}_{\S}(z,w,u,v)=\CMcal{Q}_{\S}(w,z,v,u)$ for $z\neq w$ and 
	$\CMcal{Q}_{\S}(z,w,u,v)=\CMcal{Q}_{\S}(w,z,u,v)=0$ for $u=v$. 
	
	For all rectangular tableaux $\sigma$, we have $\eta^*(\sigma')=\eta(\sigma)$ by (\ref{E:etai}) and (\ref{E:eta*}), while it is not in general true for non-rectangular tableaux. 
\end{remark}
\begin{example}	
	Let $\sigma$ be a tableau of shape $\lambda=(4,4,3)$ where we add zeros to the top of each column of $\sigma$:
	$$\begin{ytableau}[] 0&0 &0\\5&4 &5 &0\\ 3& 6&3& 1\\ 1&7&2&8 \end{ytableau}$$
	
	Take $\S$ to be the set in Example \ref{Example:S}. In view of  $\CMcal{Q}_{\CMcal{S}}(0,0,4,5)=\CMcal{Q}_{\CMcal{S}}(5,4,3,6)=\CMcal{Q}_{\CMcal{S}}(4,5,6,3)=\CMcal{Q}_{\CMcal{S}}(3,3,1,2)=1$ and $\Q_{\S}(4,6,1)=\Q_{\S}(6,7,8)=1$, we arrive at $\eta_{\CMcal{S}}(\sigma)=4$ and $\eta(\sigma)=6$.
\end{example}

%Define $\CMcal{Q}(a,b,c)=1$ if $(a,b,c)$ is a queue inversion triple or an inversion triple; otherwise $\CMcal{Q}(a,b,c)=0$. Furthermore, $\CMcal{Q}_1(z,w,u,v)=1$ if $(z,w;u,v)$ is a queue inversion quadruple; $\CMcal{Q}_2(z,w,u,v)=1$ if $(z,w;u,v)$ is an inversion quadruple; and $\CMcal{Q}_1(z,w,u,v)=\CMcal{Q}_2(z,w,u,v)=0$ if $(z,w;u,v)$ is neither a queue inversion quadruple, nor an inversion quadruple.

%For any partition $\lambda$ and  $\sigma\in\T(\lambda)$, there is a bijection $g:\T(\lambda)\rightarrow \T(\lambda)$ such that $g(\sigma) \sim \sigma$, and
%\begin{align}
%\label{eqthm4.1}\maj(g(\sigma))&= \maj(\sigma), \\
%\label{eqthm4.2}\gw(g(\sigma))&=\quinv(\sigma),
%\end{align}
%and the topmost rows of each $\sigma_i$ and $g(\sigma)_i$ are the same.

\section{Flip operators}\label{S:3}
The purpose of this section is to define the flip operators that act on two neighbor columns of equal height from \cite{AMM23,LN12} and to introduce new ones tailored for the family $\A^+$ of statistics $\eta$. Some properties of these flip operators are to be discussed in Lemmas \ref{L:2}--\ref{L:top} and we adopt the notations from \cite{AMM23,O:24}.

\begin{definition}($\quinv$-flip operator and $\A$-flip operator)\label{Def:2}
For $\sigma\in\T(\lambda)$ and for any $r,i$ such that $r\le \lambda'_i=\lambda'_{i+1}$, let $t_i^{r}$ be the operator that acts on $\sigma$ by interchanging $\sigma(r,i)$ and $\sigma(r,i+1)$. For $1\le r\le s\le \lambda'_i$, let
\begin{align*}
t_i^{[r,s]}:=t_i^{r}\circ t_i^{r+1}\cdots \circ t_i^{s}
\end{align*}
 denote the operator that swaps entries of boxes $(x,i)$ and $(x,i+1)$ for all $x$ with $r\le x\le s$. Let $\sigma(0,i)=\infty$ and $\sigma(\lambda'_{i}+1,i)=0$ for all $i$ as in Definitions \ref{Def:quinv} and \ref{Def:inv}. 
 
 The {\em $\quinv$-flip operator} $\rho_i^r$ is defined by setting $\rho_i^r=t_i^{[k,r]}$, where
 $k$ is the largest integer for which $1\le k\le r$ and
 \begin{align}\label{E:cri}
 \CMcal{Q}(\sigma(k,i),\sigma(k-1,i),\sigma(k-1,i+1))
 =\CMcal{Q}(\sigma(k,i+1),\sigma(k-1,i),\sigma(k-1,i+1)).
 \end{align} 
The {\em $\A$-flip operator} $\delta_i^r$ is defined as follows: Let $\delta_i^r=t_i^{[k,r]}$ where $k$ is the largest integer such that $1\le k\le r$ and $\sigma(k,i)=\sigma(k,i+1)$. If no such $k$ exists, set $k=1$.
For simplicity, we write 
\begin{align*}
\rho_i=\rho_i^{\lambda'_i} \quad\mbox{ and }\quad \delta_i=\delta_i^{\lambda'_i}.
\end{align*} 
By construction, both operators $\rho_i^r$ and $\delta_i^r$ are involutions as $\rho_i^r\circ\rho_i^r(\sigma)=\sigma$ and $\delta_i^r\circ\delta_i^r(\sigma)=\sigma$.
\end{definition}
\begin{remark}
The $\quinv$-flip operator $\rho_i^r$ first appeared in \cite{AMM23} and was applied to prove (\ref{E:mmp2}). This operator was inspired by the $\inv$-flip operator introduced in \cite{LN12} to establish a factorization of specialized modified Macdonald polynomials $\tilde{H}_{\lambda}(X;q,t)$ (when $t$ is specialized to a root of unity) in a combinatorial manner.
\end{remark}

The Lemma below extends the properties of $\rho_i$ to $\rho_i^r$, which will be utilized to produce the desired bijection for Theorem \ref{T:4}. It should be mentioned that the operator $\rho_i^r$ is defined slightly different in \cite{JL24}, nevertheless the same conclusion is true for both $\rho_i^r$.

\begin{lemma}\label{L:2}\cite{JL24}
Given a partition $\lambda$ such that $\lambda'_i=\lambda'_{i+1}$ for some $i$, let $\sigma\in\T(\lambda)$ and let $\young(ab,cd)$ be part of $\sigma$ where $a=\sigma(r+1,i)$, $b=\sigma(r+1,i+1)$, $c=\sigma(r,i)$ and $d=\sigma(r,i+1)$. If $\CMcal{Q}(a,c,d)=\CMcal{Q}(b,c,d)$, then
\begin{align}\label{E:maj11}
\maj(\sigma)&=\maj(\rho_i^{r}(\sigma)),\\
\label{E:quinv1}\quinv(\sigma)&=\quinv(\rho_i^{r}(\sigma))-\CMcal{Q}(a,d,c)+\CMcal{Q}(a,c,d).
\end{align}
%If $\CMcal{Q}(a,c,d)=\CMcal{Q}(b,c,d)=0$ and $c\ne d$, then
%\begin{align}\label{E:quinv1}
%\quinv(\sigma)+1=\quinv(\rho_i^{r}(\sigma)).
%\end{align}
%Equivalently, if $\CMcal{Q}(a,c,d)=\CMcal{Q}(b,c,d)=1$ and $c\ne d$, then
%\begin{align}\label{E:quinv2}
%\quinv(\sigma)-1=\quinv(\rho_i^{r}(\sigma)).
%\end{align}
%\begin{align}\label{E:maj1}
%	\maj(\sigma)=\maj(\rho_i^{r}(\sigma)).
%\end{align}
That is, the number of queue inversion triples induced between column $i$ or $i+1$ and column $j$ of $\sigma$ for any $j>i+1$ is invariant under $\rho_i^r$.
\end{lemma}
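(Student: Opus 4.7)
The plan is to analyze the action of $\rho_i^r=t_i^{[k,r]}$ row-pair by row-pair, combining the defining equality $(\ref{E:cri})$ of $k$ at the bottom of the swap window with the hypothesis $\CMcal{Q}(a,c,d)=\CMcal{Q}(b,c,d)$ at the top, and using the maximality of $k$ to handle the rows in between. Only three kinds of consecutive row-pairs $(x,x+1)$ are relevant: the interior pairs with $x,x+1\in[k,r]$, the bottom boundary pair $(k-1,k)$, and the top boundary pair $(r,r+1)$; all other row-pairs are left untouched by the swap.

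I would first dispose of $\maj$. Since $\lambda'_i=\lambda'_{i+1}$, boxes in columns $i$ and $i+1$ at the same height have the same leg, so descents in the two columns contribute equally to $\maj$. Interior pairs have both rows swapped simultaneously, so descents in columns $i,i+1$ are merely interchanged. Each boundary pair reduces to an arithmetic identity such as $[a>c]+[b>d]=[a>d]+[b>c]$, and a short case split on the ordering of the two entries in the lower row shows that this identity is precisely equivalent to the corresponding $\CMcal{Q}$-equality in the hypothesis (and analogously at the bottom using $(\ref{E:cri})$). This establishes $(\ref{E:maj11})$.

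For $\quinv$ I would classify all triples that touch columns $\{i,i+1\}$ by the column of the rightmost entry $C$, into (I) the single ``internal'' triple per row-pair with $B$ in column $i$ and $C$ in column $i+1$; (II) the ``right-external'' triples per row-pair with $B\in\{i,i+1\}$ and $C$ in some column $j>i+1$; and (III) triples with $C\in\{i,i+1\}$ and $B$ in some column $<i$. Types (II) and (III) at interior row-pairs come in paired triples whose $A$- or $C$-values are interchanged between columns $i,i+1$, so their row contribution is invariant; at each boundary they require the pair-sum identity
\begin{align*}
\CMcal{Q}(A,B_1,z)+\CMcal{Q}(A',B_2,z)=\CMcal{Q}(A',B_1,z)+\CMcal{Q}(A,B_2,z)\quad\text{for every }z,
\end{align*}
which strengthens the local condition $(\ref{E:cri})$ (resp.\ the hypothesis) from the adjacent row to an arbitrary column $j>i+1$. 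I would prove it by the same case split on the ordering of $B_1,B_2$ used for $\maj$, and it directly yields the last assertion of the lemma. The interior type~(I) contribution is controlled by the algebraic identity
\begin{align*}
\CMcal{Q}(X,B,C)=\CMcal{Q}(X',C,B)\quad\text{whenever }\CMcal{Q}(X,B,C)\ne\CMcal{Q}(X',B,C),
\end{align*}
whose proof uses that $\CMcal{Q}(\cdot,B,C)$ and $\CMcal{Q}(\cdot,C,B)$ are complementary indicators in the first argument when $B\ne C$ and vanish identically when $B=C$. The maximality of $k$ makes the inequality hypothesis of this identity true at every level $x+1$ with $k<x+1\le r$, so each interior type~(I) triple contributes the same $\CMcal{Q}$-value before and after the swap. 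What remains is a single term at each boundary: $(\ref{E:cri})$ kills the bottom one, and at the top the triple $(a,c,d)$ becomes $(a,d,c)$, producing the only nontrivial term $\CMcal{Q}(a,c,d)-\CMcal{Q}(a,d,c)$ in $(\ref{E:quinv1})$. The main obstacle is the maximality-of-$k$ step, which upgrades the single local equality at the bottom of the swap into the simultaneous cancellation of every interior change; once the two algebraic identities above are in hand, the rest is row-pair bookkeeping.
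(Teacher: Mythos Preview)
The paper does not prove this lemma here; it is quoted from \cite{JL24} and stated without argument, so there is no in-paper proof to compare against. Your proposal is the natural direct verification and is correct: the case analysis at the two boundary row-pairs, driven respectively by the hypothesis $\CMcal{Q}(a,c,d)=\CMcal{Q}(b,c,d)$ and by the defining condition \eqref{E:cri}, together with the failure of \eqref{E:cri} at every intermediate level (by maximality of $k$), accounts for all changes. One small simplification: your type~(III) triples need no boundary treatment at all, since only the single entry $C$ in row $x$ is touched and the two choices $C\in\{\sigma(x,i),\sigma(x,i+1)\}$ are simply exchanged for every $x\in[k,r]$; the pair-sum identity is only needed for type~(II). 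The key algebraic point underlying your pair-sum identity---that $\CMcal{Q}(X,c,z)-\CMcal{Q}(X,d,z)$ depends on $X$ only through $\CMcal{Q}(X,c,d)$ when $c\ne d$---is exactly what makes the local hypothesis propagate to arbitrary third columns, and it is straightforward to check by splitting on the position of $z$ relative to $c,d$.
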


We use $\tau\vert_{k}^{r}$ to denote the segment of a tableau $\tau$ from row $k$ through row $r$ for $k\le r$.

\begin{lemma}\label{L:delta}
Given a partition $\lambda$ such that $\lambda'_i=\lambda'_{i+1}$ for some $i$, let $\sigma\in\T(\lambda)$. Then 
\begin{align*}
\maj(\sigma)-\maj(\delta_i^r(\sigma))&=\maj(\sigma\vert_{r}^{r+1})-\maj(\delta_i^r(\sigma)\vert_{r}^{r+1}),\\
\eta_{\S}(\sigma)-\eta_{\S}(\delta_i^r(\sigma))&=\eta_{\S}(\sigma\vert_{r}^{r+1})-\eta_{\S}(\delta_i^r(\sigma)\vert_{r}^{r+1}).
\end{align*}
That is, the major index and the number of $\S$-quadruples of $\sigma$ between two consecutive rows that are above row $r$ or below row $r+1$ are invariant under $\delta_i^r$.
\end{lemma}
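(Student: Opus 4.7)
The plan is to establish that the flip operator $\delta_i^r$ only affects descents and $\S$-quadruples at the interface between rows $r$ and $r+1$; contributions from every other consecutive row pair are preserved. Once this localization is established, the two identities follow, because the surviving change across the row pair $(r,r+1)$ is precisely what the differences $\maj(\sigma\vert_{r}^{r+1})-\maj(\delta_i^r(\sigma)\vert_{r}^{r+1})$ and $\eta_\S(\sigma\vert_{r}^{r+1})-\eta_\S(\delta_i^r(\sigma)\vert_{r}^{r+1})$ record.

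First I would recall from Definition~\ref{Def:2} that $\delta_i^r = t_i^{[k,r]}$ exchanges columns $i$ and $i+1$ from row $k$ through row $r$, where $k$ is the largest index in $[1,r]$ with $\sigma(k,i)=\sigma(k,i+1)$, or $k=1$ otherwise. I would then classify each consecutive row pair $(x,x+1)$ of $\dg(\lambda)$ according to its position relative to the swap window $[k,r]$. When both rows lie outside $[k,r]$ (that is, $x+1\le k-1$ or $x\ge r+1$), no entry is touched, so both $\maj$ and $\eta_\S$ contributions are unchanged. When $(x,x+1)=(k-1,k)$ with $k\ge 2$, only row $k$ is swapped, but $\sigma(k,i)=\sigma(k,i+1)$ by the maximality of $k$, so the swap is invisible to this pair; the case $k=1$ makes the pair vacuous. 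The nontrivial range is $k\le x$ and $x+1\le r$, where both rows are exchanged together in columns $i,i+1$.

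For this third range I would argue separately for each statistic. On the $\maj$ side, any descent between rows $x$ and $x+1$ in column $i$ is transferred to column $i+1$ under $\delta_i^r$ and vice versa; since $\lambda'_i=\lambda'_{i+1}$ the two columns share the same leg factor, so the combined contribution to $\maj$ at this row pair is preserved. For $\eta_\S$, an intra-pair quadruple $(z,w,u,v)$ at rows $x+1,x$ in columns $i,i+1$ transforms to $(w,z,v,u)$; by the maximality of $k$ we have $\sigma(x+1,i)\ne\sigma(x+1,i+1)$ for every $x+1\in[k+1,r]$, i.e.\ $z\ne w$, so property~(1) of a $\quinv$-quadruple set yields $\Q_\S(z,w,u,v)=\Q_\S(w,z,v,u)$. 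An inter-pair quadruple coupling one of $\{i,i+1\}$ with an equal-height column $j\notin\{i,i+1\}$ is handled by the bijection sending $(i,j)$-quadruples in $\sigma$ to $(i+1,j)$-quadruples in $\delta_i^r(\sigma)$ and symmetrically, which preserves the total count.

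Only the row pair $(r,r+1)$ then remains, where row $r$ is swapped but row $r+1$ is not, and the entire net change in $\maj$ and $\eta_\S$ is concentrated there. The main obstacle will be the $\eta_\S$ case analysis in the swap range: one must carefully separate intra-pair from inter-pair quadruples, and exploit the maximality of $k$ to forbid $z=w$ above row $k$, so that property~(1) of $\S$ suffices and one need not invoke the subtler property~(2) or property~(3), whose asymmetry under $(u,v)\leftrightarrow(v,u)$ would otherwise obstruct the invariance.
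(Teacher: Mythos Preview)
Your proposal is correct and follows essentially the same approach as the paper's proof: localize the effect of $\delta_i^r=t_i^{[k,r]}$ to the swap window $[k,r]$, observe that entries outside are untouched, and use the symmetry $\Q_\S(z,w,u,v)=\Q_\S(w,z,v,u)$ for $z\ne w$ (the paper's Remark~\ref{Rem:3}, your property~(1)) to show invariance of the $\S$-quadruple count between any two consecutive rows fully inside the window. Your write-up is in fact more explicit than the paper's: you treat the $\maj$ statement separately, you spell out the boundary pair $(k-1,k)$ via the equality $\sigma(k,i)=\sigma(k,i+1)$, and you note that the maximality of $k$ is exactly what guarantees $z\ne w$ in the swap range so that property~(1) applies---a point the paper leaves implicit.
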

\begin{proof}
Suppose that $\delta_i^r=t_i^{[k,r]}$, that is, $\sigma(x,i)$ and $\sigma(x,i+1)$ are swapped for all $k\le x\le r$. Therefore, all entries above row $r$ or below row $k$ are not touched by $\delta_i^r$, yielding the invariant number of $\S$-quadruples located in these segments. In addition Remark \ref{Rem:3} assures that the set of $\S$-quadruples between rows $x-1$ and $x$ for $k\le x\le r$ is the same under $\delta_i^r$, thus showing that $\delta_i^r$ also preserves the number of $\S$-quadruples throughout rows $k$ to $r$, as wished.
\end{proof}
For the particular operators $\rho_i$ and $\delta_i$, the changes of statistics $\quinv$ and $\eta$ are synchronized for all $\eta\in \A^+$. 
\begin{lemma}\label{L:top}
Under the assumption of Lemma \ref{L:delta},
	\begin{align}\label{E:top1} \quinv(\sigma)-\quinv(\rho_i(\sigma))&=\eta(\sigma)-\eta(\delta_i(\sigma)), \\
	\label{E:topmaj1}\maj(\rho_i(\sigma))&=\maj(\delta_i(\sigma)).
	 %\label{E:top2}\inv(\sigma)-\inv(\tau_i(\sigma))&=\gwt(\sigma)-\gwt(\zeta_i(\sigma)),\\
	%\label{E:topmaj2}\maj(\tau_i(\sigma))&=\maj(\zeta_i(\sigma)).
	\end{align}
\end{lemma}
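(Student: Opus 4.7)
The plan is to establish the two identities separately, tying both $\rho_i(\sigma)$ and $\delta_i(\sigma)$ back to $\sigma$ via Lemmas \ref{L:2} and \ref{L:delta}. For \eqref{E:topmaj1} I will show $\maj(\rho_i(\sigma)) = \maj(\sigma) = \maj(\delta_i(\sigma))$. Applying Lemma \ref{L:2} with $r = \lambda'_i$, the top-boundary entries $a = \sigma(\lambda'_i+1, i)$ and $b = \sigma(\lambda'_i+1, i+1)$ are both $0$ by the convention of Definition \ref{Def:quinv}, so the hypothesis $\Q(a, c, d) = \Q(b, c, d)$ is automatic and \eqref{E:maj11} yields $\maj(\rho_i(\sigma)) = \maj(\sigma)$. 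Dually, Lemma \ref{L:delta} with $r = \lambda'_i$ concentrates $\maj(\sigma) - \maj(\delta_i(\sigma))$ into the slice between rows $\lambda'_i$ and $\lambda'_i+1$, which vanishes because the inserted top row of $0$s cannot form a descent against positive entries below.

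For \eqref{E:top1}, set $c := \sigma(\lambda'_i, i)$ and $d := \sigma(\lambda'_i, i+1)$. Taking $a = b = 0$ in Lemma \ref{L:2}, formula \eqref{E:quinv1} gives $\quinv(\sigma) - \quinv(\rho_i(\sigma)) = \Q(0, c, d) - \Q(0, d, c)$; since the only clause of Definition \ref{Def:quinv} compatible with a $0$ in the top cell of the triple is $a < b < c$, one has $\Q(0, x, y) = \chi(x < y)$ for positive $x, y$, so the LHS equals $\chi(c < d) - \chi(d < c)$. To recover the same value from the RHS, I split $\eta = \eta_\S + \sum_{(a,b,c)} \Q_\S(a,b,c)$ as in \eqref{E:etai} and treat the two summands separately.

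For the $\S$-quadruple piece, Lemma \ref{L:delta} confines $\eta_\S(\sigma) - \eta_\S(\delta_i(\sigma))$ to the slice between rows $\lambda'_i$ and $\lambda'_i+1$. Every $\S$-quadruple there has the shape $(0, 0, u, v)$ with $u, v$ at row $\lambda'_i$ in two columns $j_1 < j_2$ of height $\lambda'_i$, and property $(3)$ of Definition \ref{Def:typeqinv} combined with the previous computation gives $\Q_\S(0, 0, u, v) = \chi(u < v)$. Since $\delta_i$ modifies only $(c, d)$ in this row, contributions from pairs $(j, i)$ and $(j, i+1)$ (and from $(i, j)$ and $(i+1, j)$) for columns $j$ of height $\lambda'_i$ with $j \notin \{i, i+1\}$ cancel pairwise, leaving only the pair $(i, i+1)$ itself, which contributes $\chi(c < d) - \chi(d < c)$.

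For the $\S$-triple piece, I will show that the total is invariant under $\delta_i$. Writing $\delta_i = t_i^{[k_\delta, \lambda'_i]}$, the defining property of $k_\delta$ gives $\sigma(k_\delta, i) = \sigma(k_\delta, i+1)$, and $\delta_i$ acts trivially below row $k_\delta$. For any row $x$ and any column $j \notin \{i, i+1\}$ (shorter than $\lambda'_i$) containing $c = \sigma(x, j)$, the sum $\Q_\S(\sigma(x+1, i), \sigma(x, i), c) + \Q_\S(\sigma(x+1, i+1), \sigma(x, i+1), c)$ is symmetric under exchanging columns $i$ and $i+1$; hence the swap is harmless when $x, x+1 \in [k_\delta, \lambda'_i]$, the boundary case $x+1 = k_\delta$ is harmless because the equality $\sigma(k_\delta, i) = \sigma(k_\delta, i+1)$ prevents any effective change to the pair $(a, b)$, and rows below $k_\delta$ are untouched. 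An analogous cancellation handles $\S$-triples with $c$ in column $i$ or $i+1$ and $(a, b)$ in a taller column. Putting the two pieces together, $\eta(\sigma) - \eta(\delta_i(\sigma)) = \chi(c < d) - \chi(d < c)$, matching the LHS. The main obstacle is the bookkeeping for this $\S$-triple invariance, especially at the boundary row $k_\delta$ and at the interface with the artificial top row of $0$s, which must be handled without introducing uncancelled surplus.
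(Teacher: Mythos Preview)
Your proof is correct and follows essentially the same route as the paper's: both specialize Lemmas \ref{L:2} and \ref{L:delta} to $r=\lambda'_i$ (so $a=b=0$), reduce $\eta(\sigma)-\eta(\delta_i(\sigma))$ to the $\S$-quadruple contribution $\Q_\S(0,0,c,d)-\Q_\S(0,0,d,c)=\Q(0,c,d)-\Q(0,d,c)$, and match it with the $\quinv$ side via \eqref{E:quinv1}. Your treatment of the $\S$-triple invariance under $\delta_i$ is more explicit than the paper's one-line assertion (``as a result of $a=b$\ldots''), but the underlying column-pairing cancellation is the same; one tiny caveat is that when no row with $\sigma(k,i)=\sigma(k,i+1)$ exists, $k_\delta$ is set to $1$ by convention and the equality need not hold, but then the boundary case $x+1=k_\delta=1$ is vacuous, so your conclusion is unaffected.
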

\begin{proof}
Following the notations of Lemma \ref{L:2}, consider the case that $r=\lambda'_i$ and $a=b=0$ in Lemmas \ref{L:2} and \ref{L:delta}, one immediately has (\ref{E:topmaj1}). Besides, as a result of $a=b$, observe that the set of queue inversion triples between two columns of different heights of $\sigma$ is preserved by $\delta_i$. This implies that 
\begin{align*}
\eta(\sigma)-\eta(\delta_i(\sigma))&=\eta_{\S}(\sigma)-\eta_{\S}(\delta_i(\sigma)),
\end{align*}
which further leads to
\begin{align*}
\eta(\sigma)-\eta(\delta_i(\sigma))&=\Q_{\S}(0,0,c,d)-\Q_{\S}(0,0,d,c),\\
&=\Q(0,c,d)-\Q(0,d,c),\\
&=\quinv(\sigma)-\quinv(\rho_i(\sigma)),
\end{align*}
according to Lemmas \ref{L:2} and \ref{L:delta}. This completes the proof.
\end{proof}
We highlight a major difference between the operators $\rho_i^r$ and $\delta_i^r$, that is the braid relation. The operators $\delta_i^r$ have the braid relation (see Lemma \ref{L:braid}), whereas $\rho_i^r$ do not; see \cite{CHO22}.

\begin{lemma}\label{L:braid}
	The operators $\delta_i^r$ satisfy the braid relation, that is, $\delta_i^r\delta_{i+1}^r\delta_i^r(\tau)=\delta_{i+1}^r\delta_i^r\delta_{i+1}^r(\tau)$ for all $i$ such that $\lambda_i'=\lambda'_{i+1}=\lambda'_{i+2}$ and for all $\tau\in \T(\lambda)$. %In consequence, $\delta_{\tilde{w}}^r$ is independent of the choice of the reduced expressions $\tilde{w}$. 
\end{lemma}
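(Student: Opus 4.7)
The plan is to prove the braid relation by induction on $r$, using a recursive description of $\delta_i^r$ in terms of $\delta_i^{r-1}$ and the simple row-$r$ swap $t_i^r$. First I would establish, by unwinding Definition \ref{Def:2}, the recursion
\begin{equation*}
\delta_i^r(\sigma) = \begin{cases} \sigma & \text{if } \sigma(r,i) = \sigma(r,i+1), \\ t_i^r \circ \delta_i^{r-1}(\sigma) & \text{otherwise}. \end{cases}
\end{equation*}
Indeed, if row $r$ matches in columns $i, i+1$, the largest matching row in $[1,r]$ is $k = r$ itself, and $\delta_i^r$ swaps only row $r$, which is trivial since those entries are equal. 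Otherwise the largest matching row in $[1,r]$ coincides with that in $[1,r-1]$, so $\delta_i^{r-1}$ already performs all swaps in rows $k+1,\ldots,r-1$ and $t_i^r$ completes the action on row $r$ (noting that $t_i^r$ and $\delta_i^{r-1}$ act on disjoint rows and hence commute).

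The base case $r = 1$ is immediate: $\delta_i^1 = t_i^1$ is the simple transposition of row-$1$ entries of columns $i$ and $i+1$, and such adjacent transpositions acting on the three cells $\tau(1,i),\tau(1,i+1),\tau(1,i+2)$ satisfy the standard braid relation. For the inductive step, denote the top-row entries by $(a,b,c) = (\tau(r,i), \tau(r,i+1), \tau(r,i+2))$ and perform a case analysis on the equality pattern of $(a,b,c)$. The recursion cleanly separates each $\delta_j^r$ into (i) the simple transposition on row $r$ between columns $j$ and $j+1$, triggered iff row $r$ currently differs between those columns, and (ii) a call to $\delta_j^{r-1}$ on the sub-tableau $\tau\vert_{1}^{r-1}$, triggered under the same condition.

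When $a,b,c$ are pairwise distinct, a short computation shows that every one of the six operators in either triple product is triggered, the row-$r$ entries evolve identically on both sides via the standard braid identity $(a,b,c) \to (c,b,a)$, and the cumulative action on $\tau\vert_{1}^{r-1}$ reduces to $\delta_i^{r-1} \circ \delta_{i+1}^{r-1} \circ \delta_i^{r-1}$ versus $\delta_{i+1}^{r-1} \circ \delta_i^{r-1} \circ \delta_{i+1}^{r-1}$; these agree by the induction hypothesis. In the remaining cases, one or more of the six operators collapses to the identity, and tracking the evolution step by step shows that both sides agree directly: when $a=b\neq c$ both compositions on $\tau\vert_{1}^{r-1}$ equal $\delta_i^{r-1}\circ\delta_{i+1}^{r-1}$; when $b=c\neq a$ both equal $\delta_{i+1}^{r-1}\circ\delta_i^{r-1}$; when $a=c\neq b$ both collapse to the identity via the involution property of $\delta_j^{r-1}$; and when $a=b=c$ both triple products act as the identity on the entire tableau.

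The main obstacle is the careful bookkeeping across the five equality patterns of $(a,b,c)$: at each intermediate step one must correctly track the match status of the row-$r$ pair in the relevant two columns, since this is precisely what determines whether the corresponding $\delta_j^r$ invokes $\delta_j^{r-1}$ or acts as the identity. Once the recursion is in place, however, each case is resolved by a short direct computation, and only the all-distinct case actually invokes the induction hypothesis.
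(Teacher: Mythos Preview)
Your proof is correct. Both your argument and the paper's hinge on a case analysis according to the equality pattern of the top-row triple $(a,b,c)$, but the scaffolding differs. The paper first rewrites the braid relation (via the involution property) as the statement that the sixfold product $\delta_i\delta_{i+1}\delta_i\delta_{i+1}\delta_i\delta_{i+1}$ is the identity, then locates the topmost row with a repeated entry and reduces to the case where that row is row $r$; the case analysis is then carried out directly on that row. You instead isolate the clean recursion $\delta_i^r = t_i^r \circ \delta_i^{r-1}$ (when row $r$ differs) and run an induction on $r$, so that only the all-distinct case invokes the inductive hypothesis and the remaining patterns are handled by short direct computations. Your recursion makes the bookkeeping more transparent and yields a slightly more systematic proof; the paper's approach is marginally more direct in that it jumps straight to the first repeat rather than peeling one row at a time. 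The two arguments are close cousins and of comparable length.
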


\begin{proof}
	Without loss of generality, assume that $\tau$ has $r$ rows and three columns of equal height. Since $\delta_i$ is an involution, the braid relation is equivalent to prove that 
	\begin{align} \label{L1}
	\delta_1\circ \delta_2 \circ \delta_1\circ \delta_2 \circ \delta_1\circ \delta_2(\tau)=\tau.
	\end{align}
	Let $w_j$ be the triple of entries of the $j$th row of $\tau$. Suppose that $w_i$ is the first word from top to bottom that has repeated entries. If $w_i$ does not exist, that is, every word $w_j$ contains distinct elements, thus applying $\delta_i$ is equivalent to swapping columns $i$ and $i+1$, which gives (\ref{L1}). We next distinguish the cases that $i\ne r$ or $i=r$.
		
	If $i\ne r$, then the elements of $w_j$ are distinct for $i<j\le r$.  Every time when $\delta_i$ is applied, the top $r-i$ rows of column $i$ and column $i+1$ must be swapped, resulting in
	\begin{align*}
	\delta_1\circ \delta_2 \circ \delta_1\circ \delta_2 \circ \delta_1\circ \delta_2(\tau\vert_{i+1}^r)=\tau\vert_{i+1}^r.
	\end{align*}
    Therefore, it is sufficient to show that 
	\begin{align}\label{E:L2}
	\delta_1^i\circ \delta_2^i \circ \delta_1^i\circ \delta_2^i \circ \delta_1^i\circ \delta_2^i(\tau\vert_{1}^i)=\tau\vert_{1}^i.
	\end{align}
	Since now the first word that has repeated entries from top to bottom of $\tau\vert_1^i$ is located at the top row, (\ref{E:L2}) is essentially the same as the case $i=r$. Thus we focus on proving (\ref{L1}) for the case $i=r$.
	
	If $i=r$ and let $w_r=(a,b,c)$, then at least two letters of $w_r$ are equal. If $a=b=c$, then $\delta_i$ is the identity map and (\ref{L1}) is trivially true. 
	If $a=b\ne c$, then
	\begin{align*}
	\young(aac) \xrightarrow {\delta_2} \young(aca) \xrightarrow {\delta_1} \young(caa) \xrightarrow {\delta_2} \young(caa) \xrightarrow {\delta_1} \young(aca) \xrightarrow {\delta_2} \young(aac) \xrightarrow {\delta_1} \young(aac)\,,
	\end{align*}
	where the second $\delta_2$ and the last $\delta_1$ are identity. It follows that 
	\begin{align*}
	\delta_1\circ \delta_2 \circ \delta_1\circ \delta_2 \circ \delta_1\circ \delta_2(\sigma)=
	\delta_2 \circ \delta_1\circ \delta_1\circ \delta_2(\sigma)=\sigma.
	\end{align*}
The other two cases $a=c\ne b$ or $a\ne b=c$ follow analogously and the proof is complete.
\end{proof}

%%%%%%%%%%
%%%%%%%%%%%%%%%%%%%%%%%%%%%%%%%%%%%%%%%%%%
%%%%%%%%%%

\section{Proof of Theorem \ref{T:4}}\label{S:5}
This section is concentrated on the bijective proof of Theorem \ref{T:4}. First we prove Theorem \ref{T:1} below and then extend it to non-rectangular diagrams. Here we refer the interested readers to the book \cite{Loehr} for bijective approaches in combinatorics. 
\begin{theorem} \label{T:1}
	If $\dg(\lambda)$ is a rectangle. Then for any $\eta\in \A^+$, there is a bijection $\gamma:\T(\lambda)\rightarrow \T(\lambda)$ with the properties $\gamma(\sigma)\sim \sigma$ and
	\begin{align}
	\label{eqthm1.1A}(\maj,\eta)(\gamma(\sigma))&=(\maj,\quinv)(\sigma),
	\end{align}
    where the top rows of $\gamma(\sigma)$ and $\sigma$ are identical and $\des(\sigma\vert_{r}^{r+1})=\des(\gamma(\sigma)\vert_r^{r+1})$ for all $r$.
\end{theorem}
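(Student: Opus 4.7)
The plan is to construct $\gamma$ by induction on the height $h$ of the rectangle, using the flip operators $\rho_i^r$ and $\delta_i^r$ from Section~\ref{S:3} with $r<h$, which leave row $h$ untouched. The base case $h=1$ is immediate since both $\quinv$ and $\eta$ vanish. The key idea is to define $\gamma$ as a compound of matched flips: on the $\quinv$-side, $\sigma$ is transformed by a canonical sequence of $\rho_i^r$ operators toward a canonical sorted representative, while on the $\eta$-side, the corresponding sequence of $\delta_i^r$ operators is applied to produce $\gamma(\sigma)$.

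The central technical step is an interior analogue of Lemma~\ref{L:top}: for every $r<h$, applying $\rho_i^r$ to $\sigma$ changes $\quinv$ by the same amount as applying $\delta_i^r$ to $\sigma$ changes $\eta$, and the two operators induce the same change in the row-level descent at rows $(r,r+1)$. Lemma~\ref{L:2} already expresses the $\quinv$-change as $\CMcal{Q}(a,c,d)-\CMcal{Q}(a,d,c)$, depending only on entries $a,c,d$ at rows $r$ and $r+1$, and Lemma~\ref{L:delta} localizes the $\eta$-change under $\delta_i^r$ to the same row pair. Matching these local changes reduces to a finite case analysis on the total orderings of the four relevant entries, enabled by the defining properties (1)--(5) of a $\quinv$-quadruple set $\CMcal{S}$ in Definition~\ref{Def:typeqinv}: (1) captures symmetry, (2) exclusivity, (3) anchors the matching to queue inversion triples (particularly in the boundary case $z=w=0$), and (4)--(5) enforce the consistency needed for the match to hold uniformly for every $\eta\in\A^+$.

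Granted the interior matching, $\gamma$ is built via a canonical sorting procedure. On the $\quinv$-side, apply $\rho_i^r$ operators iteratively until no flip decreases $\quinv$, producing a canonical representative $\sigma^c$ of the row-equivalent class of $\sigma$ with top row fixed; on the $\eta$-side, apply the corresponding sequence of $\delta_i^r$ operators to $\sigma$ to produce $\gamma(\sigma)$. The braid relation of Lemma~\ref{L:braid} together with an analogous consistency for $\rho_i^r$ ensures order-independence, so $\gamma$ is well-defined. Bijectivity follows from the involution property of the flips; the top row and row contents are preserved by construction; and summing the matched local changes over the sorting sequence gives the desired $\quinv(\sigma)=\eta(\gamma(\sigma))$ along with $\des(\sigma\vert_r^{r+1})=\des(\gamma(\sigma)\vert_r^{r+1})$ for every $r$.

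The principal obstacle is establishing the interior analogue of Lemma~\ref{L:top}: the uniform case analysis equating $\CMcal{Q}(a,c,d)-\CMcal{Q}(a,d,c)$ under $\rho_i^r$ with the $\eta_{\CMcal{S}}$-change under $\delta_i^r$ for every $\quinv$-quadruple set $\CMcal{S}$. While properties (1)--(5) of Definition~\ref{Def:typeqinv} are tailored precisely for this purpose, organizing the analysis across all sixteen statistics of $\A^+$ and all total orderings of the four entries $a,b,c,d$ requires careful bookkeeping. A secondary technical point is proving order-independence of the sorting, which may require an auxiliary braid-type relation for $\rho_i^r$ beyond what is explicitly stated in the excerpt.
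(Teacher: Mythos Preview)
Your proposal has a genuine gap: the ``interior analogue of Lemma~\ref{L:top}'' that you posit---that applying $\rho_i^r$ changes $\quinv$ by the same amount that applying $\delta_i^r$ changes $\eta$, for $r$ strictly below the top row---is false. The proof of Lemma~\ref{L:top} works precisely because at the top row one has $a=b=0$, so property~(3) of Definition~\ref{Def:typeqinv} forces $\CMcal{Q}_{\S}(0,0,c,d)=\CMcal{Q}(0,c,d)$, collapsing the $\S$-quadruple to a queue inversion triple. For interior rows with $a\ne b$ no such reduction is available, and in fact the two changes can have opposite sign. Concretely, take $\lambda=(2,2,2)$, $\S=\S_2$, and $\sigma$ with rows $(1,2)$, $(3,4)$, $(5,6)$ from top to bottom. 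Then $\rho_1^2$ swaps only row~$2$ and decreases $\quinv$ by $1$ (Lemma~\ref{L:2} gives $\CMcal{Q}(1,3,4)-\CMcal{Q}(1,4,3)=1$), while $\delta_1^2$ swaps rows~$1$ and~$2$ and, by Lemma~\ref{L:delta}, changes $\eta_{\S_2}$ by $\CMcal{Q}_{\S_2}(1,2,3,4)-\CMcal{Q}_{\S_2}(1,2,4,3)=0-1=-1$. So the matching fails already on a minimal example. As a secondary point, the paper notes explicitly (just before Lemma~\ref{L:braid}) that the $\rho_i^r$ do \emph{not} satisfy the braid relation, so your order-independence argument for the sorting on the $\quinv$-side cannot go through either.

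The paper's actual route is structurally different. It first reduces to the two-row case (Proposition~\ref{pro1A}) by an induction that uses only the top-level flips $\rho_i=\rho_i^{\lambda_i'}$ and $\delta_i=\delta_i^{\lambda_i'}$, exactly where Lemma~\ref{L:top} applies. For two rows it derives a difference formula \eqref{eqthm1.12} expressing $\quinv(\sigma)-\eta(\sigma)$ as a signed sum of indicators $\chi(\tau_i)-\chi(\tau_i^*)$ over four ordering types; on tableaux whose top row is suitably monotone these indicators vanish, yielding $\quinv=\eta$ outright (Lemma~\ref{L:1}). The general two-row case (Proposition~\ref{prop:thm9rec}) is then handled by separating descent and non-descent columns, applying Lemma~\ref{L:1} to each piece, and checking that the cross terms between the two pieces match via the identity \eqref{E:Q}, verified by the case analysis in Table~\ref{Tab:1A}. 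None of this relies on matching interior flips.
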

Before we embark on the proof, we first show that Theorem \ref{T:1} is sufficient to prove Theorem \ref{T:4} for rectangular diagrams because of the following immediate consequence of Theorem \ref{T:1}.
\begin{corollary}\label{cor:10}
For any rectangular tableau $\sigma$ with the largest entry $N$, let $\sigma'$ be obtained from $\sigma$ by substituting any entry $x$ by $x^c=N+1-c$ and flipping it upside down. Then, for all $\eta\in \A^+$, we have $\gamma(\sigma)'\sim \sigma'$ and
\begin{align}
\label{eqthm1.1A2}(\maj,\eta^*)(\gamma(\sigma)')&=(\maj,\inv)(\sigma'),
\end{align}
where the bottom rows of $\gamma(\sigma)'$ and $\sigma'$ are identical.
\end{corollary}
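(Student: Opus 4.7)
The plan is to obtain Corollary~\ref{cor:10} as a direct transport of Theorem~\ref{T:1} through the complement-and-flip involution $\sigma \mapsto \sigma'$ on rectangular tableaux. The first step is to verify that, on a rectangle, this involution interchanges $\quinv$ and $\inv$: a queue inversion triple $(a,b,c)$ in $\sigma$, with $b$ directly below $a$ in a column and $c$ in the row of $b$, is carried to the triple $(b^c, a^c, c^c)$ in $\sigma'$, in which $a^c$ sits directly below $b^c$ and $c^c$ shares the row of $b^c$---exactly the configuration of an inversion triple. A routine case analysis on the four orderings in Definition~\ref{Def:quinv} confirms that complementing entries maps each queue-inversion condition bijectively to the corresponding inversion condition of Definition~\ref{Def:inv}, with the row of $0$'s appended to the top of $\sigma$ playing precisely the role of the row of $\infty$'s appended to the bottom of $\sigma'$. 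This yields $\quinv(\sigma) = \inv(\sigma')$ for every rectangular $\sigma$.

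With that in hand, combine Theorem~\ref{T:1} with the rectangular half of Remark~\ref{Rem:3}, which asserts $\eta^*(\sigma') = \eta(\sigma)$ on rectangles, to get
$$\eta^*(\gamma(\sigma)') \;=\; \eta(\gamma(\sigma)) \;=\; \quinv(\sigma) \;=\; \inv(\sigma'),$$
which settles the $\eta^*$-component. For the $\maj$-component, I would express $\maj(\tau')$ through the per-row descent counts of $\tau$. Let $h$ be the common column height of the rectangle and let $D_r(\tau)$ be the number of descent cells lying in row $r$ of $\tau$. The flip sends a descent at row $r$ of $\tau$ to a descent at row $h-r+2$ of $\tau'$ (the complement reverses the order and the vertical flip reinstates a descent at the reflected position), whence a short accounting gives
$$\maj(\tau') \;=\; \sum_{r \geq 2} D_r(\tau)\,(r-1).$$
Since Theorem~\ref{T:1} guarantees $\des(\sigma\vert_r^{r+1}) = \des(\gamma(\sigma)\vert_r^{r+1})$ for all $r$, that is $D_r(\sigma) = D_r(\gamma(\sigma))$, we conclude $\maj(\sigma') = \maj(\gamma(\sigma)')$.

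The two remaining assertions are immediate. The row-equivalence $\gamma(\sigma)' \sim \sigma'$ follows from $\gamma(\sigma) \sim \sigma$ because the $'$-operation acts row-by-row (complement each entry, then relabel rows by a fixed bijection depending only on $h$), so equality of row multisets is preserved. Likewise, the coincidence of the bottom rows of $\gamma(\sigma)'$ and $\sigma'$ is just the image under $'$ of the coincidence of their top rows supplied by Theorem~\ref{T:1}. I expect the main technical point to be the bookkeeping in the first step---carefully aligning the $0$-padding convention of $\quinv$ with the $\infty$-padding convention of $\inv$ across the complement-flip---but no new combinatorics beyond Theorem~\ref{T:1} and Remark~\ref{Rem:3} is required.
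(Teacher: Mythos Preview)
Your proposal is correct and follows essentially the same route as the paper's proof: both invoke Remark~\ref{Rem:3} for $\eta^*(\gamma(\sigma)')=\eta(\gamma(\sigma))$ and $\quinv(\sigma)=\inv(\sigma')$ under the complement--flip, feed in Theorem~\ref{T:1} for $\eta(\gamma(\sigma))=\quinv(\sigma)$, and use the row-wise descent equality $\des(\sigma\vert_r^{r+1})=\des(\gamma(\sigma)\vert_r^{r+1})$ to deduce $\maj(\sigma')=\maj(\gamma(\sigma)')$. Your version simply spells out the $\quinv$-to-$\inv$ case analysis and the $\maj$ bookkeeping more explicitly (the paper also cites \eqref{E:T1}, but that reference appears inessential to the argument).
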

\begin{proof}
Remark \ref{Rem:3} says $\eta(\gamma(\sigma))=\eta^*(\gamma(\sigma)')$
and similarly $\quinv(\sigma)=\inv(\sigma')$. The condition that $\des(\sigma\vert_{r}^{r+1})=\des(\gamma(\sigma)\vert_r^{r+1})$ for all $r$ yields that $\maj(\sigma')=\maj(\gamma(\sigma)')$. Therefore (\ref{eqthm1.1A2}) is concluded by combining these facts with (\ref{eqthm1.1A}) and (\ref{E:T1}). 
\end{proof}

We now focus on establishing (\ref{eqthm1.1A}). The proof is divided into three steps. In the first step, we provide a list of all $\quinv$-quadruple sets $\S$, as a preparation for the follow-up discussions. In the second step, we reduce the proof of Theorem \ref{T:1} from rectangular tableaux to two-row rectangular tableaux and claim that Theorem \ref{T:1} holds for a subset of two-row rectangular tableaux (see Proposition \ref{pro1A} and Lemma \ref{L:1}). In the third step, we present the bijection $\gamma$ for such subset of two-row tableaux and extend it to non-rectangular tableaux. 
\begin{lemma}\label{L:setSi}
	Let $\S^{*}=\{(z,w,u,v)\mid (z=w>v>u), (u\ge z=w>v),(v>u\ge z=w)\}$ and let $\S_i$ be the set given as below:
	\begin{itemize}
		\item $\S_1=\S^{*}\,\dot\cup\,\{(z,w,u,v),(w,z,v,u) \mid (z>v\ge w>u),\, (u\ge z>v\ge w),\, (z>w>v>u), \,(u>v\ge z>w),\, (z>u>v\ge w),\, (v\ge z>w>u)\}$;\\
		\item $\S_2=\S^{*}\,\dot\cup\,\{(z,w,u,v),(w,z,v,u) \mid (z>v\ge w>u),\, (u\ge z>v\ge w),\, (z>w>v>u), \,(v>u\ge z>w),\, (z>u>v\ge w),\, (v\ge z>w>u)\}$;\\
		%$\S_2=\S_1\,\backslash\,\{(z,w,u,v),(w,z,v,u)\mid (u>v\ge z>w)\}\,\dot\cup\,\{(z,w,u,v),(w,z,v,u)\mid (v>u\ge z>w)\}$;\\
		%\item $\S_2=\{(z,w,u,v) \mid (z>v\ge w>u),\, (u\ge z>v\ge w),\, (z>w>v>u), \,(v>u\ge z>w),\, (z>u>v\ge w),\, (v\ge z>w>u)\}$;
		\item $\S_3=\S^{*}\,\dot\cup\,\{(z,w,u,v),(w,z,v,u) \mid (z>v\ge w>u),\, (u\ge z>v\ge w),\, (z>w>u>v), \,(u>v\ge z>w),\, (z>u>v\ge w),\, (v\ge z>w>u)\}$;\\
		%$\S_3=\S_1\,\backslash\,\{(z,w,u,v),(w,z,v,u)\mid (z>w>v>u)\}\,\dot\cup\,\{(z,w,u,v),(w,z,v,u)\mid (z>w>u>v)\}$;\\
		%\item $\S_3=\{(z,w,u,v) \mid (z>w>v>u),\, (u\ge z>v\ge w,)\, (z>w>u>v), \, (u>v\ge z>w),\, (z>u>v\ge w),\,(v\ge z>w>u)\}$;
		\item 
		$\S_4=\S^{*}\,\dot\cup\,\{(z,w,u,v),(w,z,v,u) \mid (z>v\ge w>u),\, (u\ge z>v\ge w),\, (z>w>u>v), \,(v>u\ge z>w),\, (z>u>v\ge w),\, (v\ge z>w>u)\}$;\\
		%$\S_4=\S_3\,\backslash\,\{(z,w,u,v)\mid (u>v\ge z>w)\}\,\dot\cup\,\{(z,w,u,v)\mid (v>u\ge z>w)\}$;\\
		\item $\S_5=\S^{*}\,\dot\cup\,\{(z,w,u,v),(w,z,v,u) \mid (z>v\ge w>u),\, (u\ge z>v\ge w),\, (z>w>v>u), \,(u>v\ge z>w),\, (z>v>u\ge w),\, (u\ge z>w>v)\}$;\\
		%$\S_5^>=\S_1^>\,\backslash\,\{(z,w,u,v)\mid (z>u>v\ge w),\, (v\ge z>w>u)\}\,\dot\cup\,\{(z,w,u,v)\mid (z>v>u\ge w),\, (u\ge z>w>v)\}$;\\
		\item $\S_6=\S^{*}\,\dot\cup\,\{(z,w,u,v),(w,z,v,u) \mid (z>v\ge w>u),\, (u\ge z>v\ge w),\, (z>w>u>v), \,(u>v\ge z>w),\, (z>v>u\ge w),\, (u\ge z>w>v)\}$;\\
		%$\S_6^>=\S_5^{>}\,\backslash\,\{(z,w,u,v)\mid (z>w>v>u)\}\,\dot\cup\,\{(z,w,u,v)\mid (z>w>u>v)\}$;\\
		\item $\S_7=\S^{*}\,\dot\cup\,\{(z,w,u,v),(w,z,v,u) \mid (z>v\ge w>u),\, (u\ge z>v\ge w),\, (z>w>v>u), \,(v>u\ge z>w),\, (z>v>u\ge w),\, (u\ge z>w>v)\}$;\\
		%$\S_7^>=\S_5^{>}\,\backslash\,\{(z,w,u,v)\mid (u>v\ge z>w)\}\,\dot\cup\,\{(z,w,u,v)\mid (v>u\ge z>w)\}$;\\
		\item $\S_8=\S^{*}\,\dot\cup\,\{(z,w,u,v),(w,z,v,u) \mid (z>v\ge w>u),\, (u\ge z>v\ge w),\, (z>w>u>v), \,(v>u\ge z>w),\, (z>v>u\ge w),\, (u\ge z>w>v)\}$.
		%$\S_8^>=\S_7^{>}\,\backslash\,\{(z,w,u,v)\mid (z>w>v>u)\}\,\dot\cup\,\{(z,w,u,v)\mid (z>w>u>v)\}$.
	\end{itemize}
Then $\S_i$'s are all possible $\quinv$-quadruple sets.
\end{lemma}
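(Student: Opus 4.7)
The plan is a case analysis, directly enumerating all $\quinv$-quadruple sets by following the axioms of Definition~\ref{Def:typeqinv}. First I would verify that each of $\S_1,\ldots,\S_8$ is a $\quinv$-quadruple set. This is mechanical: the notation $\{(z,w,u,v),(w,z,v,u)\mid\cdot\}$ builds in axiom (1); axiom (2) is verified by checking that each $(u,v)$-swap pair contributes exactly one representative; axiom (3) holds because $\S^*$ is explicitly adjoined to every $\S_i$; axiom (4) follows from the dichotomy $\{\S_1,\ldots,\S_4\}$ versus $\{\S_5,\ldots,\S_8\}$ for the patterns $(z>u>v\ge w)$ and $(v\ge z>w>u)$; and axiom (5) is immediate as every $\S_i$ contains both $(z>v\ge w>u)$ and $(u\ge z>v\ge w)$.

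For uniqueness, I would start from axiom (3), which pins down the $z=w$ part of any $\quinv$-quadruple set $\S$ as equal to $\S^*$, and then use axiom (1) to reduce to the $z>w$ part: the $w>z$ part is determined by the involution $(z,w,u,v)\mapsto(w,z,v,u)$. Next I would enumerate the $26$ orderings of $(z,w,u,v)$ with $z>w$ and $u\ne v$ according to the positions of $u,v$ relative to $\{z,w\}$ (strictly above $z$, equal to $z$, strictly between $z$ and $w$, equal to $w$, or strictly below $w$), obtaining $13$ pairs under the $(u,v)$-swap involution. Axiom (2) forces exactly one element of each pair to lie in $\S$.

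Axioms (4)--(5) are pattern-level statements (involving $\ge$) that organize these $13$ pairs into six pattern-groups, each balanced against its $(u,v)$-swap alternative. Axiom (5) fixes the patterns $(z>v\ge w>u)$ and $(u\ge z>v\ge w)$ in $\S$, pinning down six pairs. Axiom (4) binds the patterns $(z>u>v\ge w)$ and $(v\ge z>w>u)$ into a single binary choice covering four further pairs. The two remaining pattern-pairs, $(u>v\ge z>w)$ vs.\ $(v>u\ge z>w)$ and $(z>w>v>u)$ vs.\ $(z>w>u>v)$, contribute two more independent binary choices, yielding $2^3=8$ admissible configurations that match $\S_1,\ldots,\S_8$ one-to-one under direct comparison.

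The main obstacle is to apply axioms (4)--(5) at the appropriate pattern granularity. These axioms constrain groups of orderings jointly via $\ge$ inequalities, while axiom (2) acts on individual $(u,v)$-swap pairs; the subtle step is to verify that the remaining choices cohere as pattern-sized units, aligning along the $(u>v\ge z>w)/(v>u\ge z>w)$ dichotomy rather than splitting independently within a pattern. Once this pattern coherence is established, the reduction to three binary choices and the final matching with $\S_1,\ldots,\S_8$ become straightforward bookkeeping.
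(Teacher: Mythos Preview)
Your approach is correct and essentially the same as the paper's: both reduce to the $z>w$ case via axioms (1) and (3), use axiom (5) to fix the mandatory patterns $(z>v\ge w>u)$ and $(u\ge z>v\ge w)$, and then count the remaining free binary choices to get $2^3=8$. The paper works directly at the pattern level (grouping the leftover orderings into the four two-element sets $a_1,\ldots,a_4$ and observing that axiom (4) ties $a_3$ to $a_4$), whereas you start from the $26$ individual weak orderings before aggregating; since elements of $\S$ are by definition total-ordering patterns rather than specific integer tuples, the pattern-coherence concern you flag is automatically satisfied and need not be argued separately.
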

\begin{proof}
We shall discuss how the conditions (1)--(5) of Definition \ref{Def:typeqinv} determine all the eight $\S$-sets. Under the constraints (1)--(5) of Definition \ref{Def:typeqinv}, we distinguish the mandatory and optional elements for each $\S$. 
 
 For the case $z=w$, the elements of any $\S$ are fixed by (3), that is, $\S$ must contain $\S^{*}$. In addition, (1) implies that it suffices to discuss the elements of $\S$ for the case that $z>w$, because transposing $z,w$ and $u,v$ gives the quadruples of $\S$ for the other case that $z<w$. 

For $z>w$, because (5) requires $(z>v\ge w>u), (u\ge z>v\ge w)$
to be in any set $\S$, we consider the remaining total orderings, which are partitioned into four disjoint subsets as below:
\begin{align}\label{E:seta}
	a_1&=\{(z,w,u,v)\mid (z>w>v>u),\, (z>w>u>v)\}; \\
	a_2&=\{(z,w,u,v)\mid(u>v\ge z>w),\, (v>u\ge z>w)\};\notag\\
	a_3&=\{(z,w,u,v)\mid(z>u>v\ge w),\, (z>v>u\ge w)\}; \notag\\
	a_4&=\{(z,w,u,v)\mid(v\ge z>w>u), \, (u\ge z>w>v)\}\notag.
\end{align}
From (2) and (4), exactly one of the total orderings of each $a_i$ is chosen to be in $\S$ and the selection from $a_4$ is determined by the one from $a_3$. This results in eight ways to construct the set $\S$ for $z>w$. Together with $\S^{*}$, all possible sets $\S$ with $\S\ne \S^{*}$ are generated.
\end{proof}
Observe that the set $\S$ in Example \ref{Example:S} is actually $\S_2$ and $\A\cap \{\inv,\quinv\}=\varnothing$. Since each two statistics $\eta$ and $\eta^*$ depends solely on the set $\S$, the family $\A$ has sixteen statistics. We now begin with a reduction for the proof of Theorem \ref{T:1}.

\begin{proposition} \label{pro1A}
	Suppose that Theorem \ref{T:1} holds for all two-row rectangular diagrams, then it is true for all rectangular diagrams.
\end{proposition}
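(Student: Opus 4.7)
The plan is to prove the proposition by induction on the number of rows $n$, with the two-row case serving as the base, provided by the hypothesis. Given an $n$-row rectangular tableau $\sigma$ with rows $R_1,\dots,R_n$ from bottom to top, I would construct $\gamma$ as the composition $\gamma_{n-1}\circ\gamma_2$: first apply the two-row bijection $\gamma_2$ (from the hypothesis) to the top pair $(R_{n-1},R_n)$, producing $\sigma^{(1)}$ with $R_n$ preserved, $R_{n-1}$ modified to a row-equivalent $R_{n-1}^{(1)}$, and $\des(R_{n-1}^{(1)},R_n)=\des(R_{n-1},R_n)$; then appeal to the inductive hypothesis to apply an $(n-1)$-row bijection $\gamma_{n-1}$ to the bottom $n-1$ rows of $\sigma^{(1)}$, yielding $\gamma(\sigma)$ whose top two rows are $(R_{n-1}^{(1)},R_n)$.

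For the statistic decomposition in a rectangle, I would use that $\maj(\sigma)=\sum_{r=1}^{n-1}(n-r)\des(\sigma|_r^{r+1})$, together with the row-pair decompositions $\quinv(\sigma)=\sum_{k=1}^{n}\quinv_k(\sigma)$ and $\eta_\S(\sigma)=\sum_{k=1}^{n}\eta_{\S,k}(\sigma)$, in which $\quinv_k$ and $\eta_{\S,k}$ each depend only on rows $k$ and $k+1$ (and row $n+1$ is the added zero row). The two-row hypothesis applied to $(R_{n-1},R_n)$ then yields $\quinv_{n-1}(R_{n-1},R_n)+\quinv_n(R_n)=\eta_{\S,n-1}(R_{n-1}^{(1)},R_n)+\eta_{\S,n}(R_n)$, while $\gamma_{n-1}$ converts the remaining sub-tableau $\quinv$ contribution to an $\eta_\S$ contribution. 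Adding the two transfers should give the global identity $\eta(\gamma(\sigma))=\quinv(\sigma)$, once the ``$(n-1)$-row context'' statistics (which see virtual zeros above $R_{n-1}^{(1)}$) are reconciled with the ``$n$-row context'' statistics (which see the actual row $R_n$ above $R_{n-1}^{(1)}$); this reconciliation is precisely what the two-row identity applied at the top supplies.

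The main technical obstacle will be the pairwise descent preservation $\des(\sigma|_r^{r+1})=\des(\gamma(\sigma)|_r^{r+1})$ for every $r$, and most acutely at the interface $r=n-2$. Indeed $\gamma_{n-1}$ preserves $\des(R_{n-2}',R_{n-1}^{(1)})=\des(R_{n-2},R_{n-1}^{(1)})$ inside the sub-tableau, but this generally differs from $\des(R_{n-2},R_{n-1})$ because $R_{n-1}^{(1)}\ne R_{n-1}$. To circumvent this, I would exploit the locality of the $\A$-flip operator $\delta_i^r$ (Lemma \ref{L:delta}), the coordinated identities between $\rho_i$ and $\delta_i$ on top rows (Lemma \ref{L:top}), and the braid relation for $\delta_i^r$ (Lemma \ref{L:braid}), with the aim of rewriting the naive composition $\gamma_{n-1}\circ\gamma_2$ as an equivalent sequence of $\delta_i^r$-flips that can be rerouted level by level so as to preserve each pairwise descent while still realizing the global $\quinv$-to-$\eta$ transfer. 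Establishing this rerouting cleanly is the technical heart of the argument.
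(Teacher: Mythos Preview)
Your order of operations creates a gap you do not close. By applying $\gamma_2$ to the top pair first, you replace $R_{n-1}$ by $R_{n-1}^{(1)}$ and only then invoke the inductive hypothesis on the bottom $n-1$ rows. But that hypothesis now yields $\eta(\gamma_{n-1}(\sigma^{(1)}|_1^{n-1}))=\quinv(\sigma^{(1)}|_1^{n-1})$, not $\quinv(\sigma|_1^{n-1})$; the two differ exactly by $\quinv_{n-2}(R_{n-2},R_{n-1}^{(1)})-\quinv_{n-2}(R_{n-2},R_{n-1})$, which is nonzero in general (and the same mismatch appears in $\maj$ via the descent count at level $n-2$). So the problem is not only the pairwise-descent preservation you flag, but the $\quinv$ bookkeeping itself. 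Your ``rerouting'' paragraph names the relevant tools but does not supply the idea that fixes this.

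The paper's fix is to reverse the order. First apply the inductive $(m-1)$-row bijection to $\sigma|_1^{m-1}$, obtaining $\tau$ with $(\eta,\maj)(\tau)=(\quinv,\maj)(\sigma|_1^{m-1})$ and with row $m-1$ \emph{unchanged}; stack $R_m$ back on top to form $\tilde\sigma$, so that $\tilde\sigma|_{m-1}^{m}=\sigma|_{m-1}^{m}$. Then realise the two-row bijection on $\sigma|_{m-1}^m$ as a word $\delta_{j_1}^1\circ\cdots\circ\delta_{j_k}^1$ and \emph{lift} it to $\gamma(\sigma)=\delta_{j_1}^{m-1}\circ\cdots\circ\delta_{j_k}^{m-1}(\tilde\sigma)$. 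Each $\delta_i^{m-1}$ swaps columns $i,i+1$ in a contiguous block from row $m-1$ downward, and Lemma~\ref{L:delta} guarantees that all $\maj$ and $\eta_\S$ contributions between consecutive row pairs below row $m$ --- including the interface pair $(m-2,m-1)$ --- are preserved. Thus the induction and the two-row step compose cleanly, and $\des(\gamma(\sigma)|_r^{r+1})=\des(\sigma|_r^{r+1})$ for every $r$ is immediate; the braid relation is not needed here.
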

\begin{proof}
	The base case (two rows) is true by assumption. Suppose that $\sigma$ contains $m$ rows, the induction hypothesis says that there is an $(m-1)$-row filling $\tau$ such that the top rows of $\sigma\vert_1^{m-1}$ and $\tau$ are identical, that is, $\sigma|_{m-1}^{m-1}=\tau|_{m-1}^{m-1}$, and for any $\S$,
	\begin{align}\label{eqlem5.1A}
	(\eta,\maj)(\tau)&=(\eta_{\S},\maj)(\tau)=(\quinv,\maj)(\sigma|_{1}^{m-1}),\\
	\des(\tau\vert_{r}^{r+1})&=\des(\sigma\vert_r^{r+1}),\notag
	\end{align}
	for all $1\le r<m-1$. Let $\tilde{\sigma}$ be an $m$-row filling derived by adding the top row of $\sigma$ to the top of $\tau$, i.e., $\tilde{\sigma}|_{1}^{m-1}=\tau$, $\tilde{\sigma}|_{m-1}^{m}=\sigma|_{m-1}^{m}$. Since Theorem \ref{T:1} holds for any two-row rectangular diagram, there is a filling $\tilde{\tau}$ such that $\tilde{\tau}\sim\sigma|_{m-1}^{m}$, 
	\begin{align}\label{eqlem5.2A}
	(\eta,\maj)(\tilde{\tau})&=(\quinv,\maj)(\sigma|_{m-1}^{m}),\\
	\des(\tilde{\tau})&=\des(\sigma|_{m-1}^{m}), \notag
	\end{align}
	and the top rows of $\tilde{\tau}$ and $\sigma|_{m-1}^{m}$ are the same. This indeed implies that only the entries of the bottom row of $\sigma|_{m-1}^{m}$ are sorted, by means of a sequence of operators $\delta_i^1$. Suppose that 
	\begin{align*}
	\delta_{j_1}^1\circ \cdots\circ\delta_{j_m}^1(\sigma\vert_{m-1}^{m})=\tilde{\tau}.
	\end{align*}
	Define the map $\gamma:\T(\lambda)\rightarrow\T(\lambda)$ for rectangular diagram of $\lambda$ as follows:
	\begin{align}\label{E:gamma_square}
	\gamma(\sigma)=\delta_{j_1}^{m-1}\circ \cdots\circ\delta_{j_m}^{m-1}(\tilde{\sigma}),
	\end{align}
	where the top two rows of $\gamma(\sigma)$ and $\tilde{\tau}$ are identical, i.e., $\gamma(\sigma)|_{m-1}^{m}=\tilde{\tau}$. 
	
	Lemma \ref{L:delta} shows that $\delta_j^{m-1}$ keeps the major index and the number of $\S$-quadruples below row $m-1$ invariant. It then follows from (\ref{E:gamma_square}) and $\eta=\eta_{\S}$ that 
	\begin{align}\label{eqlem5.4A}
	\eta(\gamma(\sigma)|_{1}^{m-1})-\eta(\gamma(\sigma)|_{m-1}^{m-1})&=\eta(\tau)-\eta(\tau|_{m-1}^{m-1}),\\
	\label{eqlem5.41}\maj(\gamma(\sigma)|_{1}^{m-1})&=\maj(\tau).
	\end{align}
	Furthermore, $\des(\gamma(\sigma)|_{r}^{r+1})=\des(\tau\vert_{r}^{r+1})$
	for all $1\le r<m-1$.
	When a tableau contains only one row, the $\S$-quadruples correspond to queue inversion triples. In consequence,  \eqref{eqlem5.1A}, \eqref{eqlem5.2A}, \eqref{eqlem5.4A} and \eqref{eqlem5.41} give that
	\begin{align} \label{eqlem5.5A}
	(\quinv,\maj)(\sigma)&=(\quinv,\maj)(\sigma|_{1}^{m-1})-(\quinv,\mathsf{0})(\sigma|_{m-1}^{m-1})+(\quinv,\maj)(\sigma|_{m-1}^{m}) \notag\\
	&=(\eta,\maj)(\tau)-(\quinv,\mathsf{0})(\tau|_{m-1}^{m-1})+(\quinv,\maj)(\sigma|_{m-1}^{m}) \notag\\
	&=(\eta,\maj)(\tau)-(\eta,\mathsf{0})(\tau|_{m-1}^{m-1})+(\eta,\maj)(\tilde{\tau}) \notag\\
	&=(\eta,\maj)(\gamma(\sigma)|_{1}^{m-1})-(\eta,\mathsf{0})(\gamma(\sigma)|_{m-1}^{m-1})+(\eta,\maj)(\gamma(\sigma)|_{m-1}^{m}) \notag\\
	&=(\eta,\maj)(\gamma(\sigma)),
	\end{align}
	where $\mathsf{0}(\rho)=0$ for any tableau $\rho$. Furthermore,  $\des(\gamma(\sigma)\vert_{r}^{r+1})=\des(\tau\vert_{r}^{r+1})=\des(\sigma\vert_{r}^{r+1})$ for $1\le r<m-1$ and $\des(\gamma(\sigma)\vert_{m-1}^{m})=\des(\tilde{\tau})=\des(\sigma\vert_{m-1}^{m})$. Hence the inductive proof is finished. 
	\end{proof}

To prove Theorem \ref{T:1} for two-row rectangular tableaux, we proceed by classifying quadruples required by Lemma \ref{L:1} below. For $\tau=(z,w,u,v)$, let $\tau^r=(w,z,v,u)$ and $\tau^*=(z,w,v,u)$, displayed as follows.
$$\tau=\begin{ytableau} z  &\none[\dots] & w\\u &\none[\dots] & v\end{ytableau}\, ,\quad\,\, \tau^r=\begin{ytableau} w &\none[\dots] & z\\v &\none[\dots] & u\end{ytableau}\, ,\quad\,\,\tau^*=\begin{ytableau} z  &\none[\dots] & w\\v &\none[\dots] & u\end{ytableau}\,.$$
Let $a_i^r=\{\tau \mid \tau^r\in a_i \}$ where $a_i$ is given in (\ref{E:seta}). 
Besides, we call $\tau=(z,w,u,v)$
\begin{itemize}
	\item a {\em neutral} quadruple if $\chi(z>u)=\chi(w>v)$; otherwise a {\em non-neutral} quadruple;
	\item  a {\em decreasing} quadruple if $z>w$; an {\em increasing} quadruple if $z<w$.
\end{itemize}
\begin{lemma}\label{L:1}
	Theorem \ref{T:1} holds for $\lambda=(n,n)$ and $\sigma\in \T(\lambda)$ with $\maj(\sigma)\in \{0,n\}$.
\end{lemma}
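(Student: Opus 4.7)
The proof hinges on three observations. First, for $\lambda=(n,n)$ every top-row cell has $\mathsf{leg}=0$, so $\maj(\sigma)=\des(\sigma)$, and the hypothesis $\maj(\sigma)\in\{0,n\}$ becomes either $z_i\le u_i$ for every column $i$ (the case $\maj=0$) or $z_i>u_i$ for every $i$ (the case $\maj=n$), where $z_i:=\sigma(2,i)$ and $u_i:=\sigma(1,i)$. Second, the constraints imposed on $\gamma(\sigma)$ in the statement of Theorem~\ref{T:1}---row-equivalence, identical top row, and $\des(\gamma(\sigma)|_1^2)=\des(\sigma|_1^2)$---force $\gamma$ to act only on the bottom row, permuting it among arrangements satisfying the same column-wise inequality. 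Using the flip operators $\delta_i^1$ of Definition~\ref{Def:2}, which simply swap the bottom entries of adjacent columns, any such permutation of the bottom row is reachable.

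Both $\quinv$ and $\eta=\eta_\S$ (the $\S$-triple sum vanishes on a rectangle) decompose as sums over ordered column pairs $(i,j)$ with $i<j$. The $\quinv$-contribution of a pair is $\Q(0,z_i,z_j)+\Q(z_i,u_i,u_j)$, one triple per row; the $\eta$-contribution is $\Q_\S(0,0,z_i,z_j)+\Q_\S(z_i,z_j,u_i,u_j)$, one quadruple per pair of adjacent rows (including the added zero row at the top). Property~$(3)$ of every $\S$-quadruple set gives $\Q_\S(0,0,z_i,z_j)=\Q(0,z_i,z_j)$, so the ``upper'' contributions match automatically once the top row is preserved. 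The lemma thus reduces to exhibiting, for the given $\S$, a permutation $u'$ of $u$ respecting the maj constraint such that
\[
\sum_{i<j}\Q_\S(z_i,z_j,u'_i,u'_j)=\sum_{i<j}\Q(z_i,u_i,u_j).
\]

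The construction of $\gamma$ proceeds by a bubble-sort of the bottom row using the involutions $\delta_i^1$, tailored to $\S$: at each adjacent pair, $\delta_i^1$ is applied exactly when doing so brings the local contribution $\Q_\S(z_i,z_{i+1},u_i,u_{i+1})$ closer to the target value $\Q(z_i,u_i,u_{i+1})$ from $\sigma$. Verification is a case analysis over the eight $\S$-quadruple sets of Lemma~\ref{L:setSi}. Symmetry properties $(1)$ and $(4)$ of Definition~\ref{Def:typeqinv} collapse the eight sets into a small number of equivalence classes, and the maj hypothesis (which excludes the patterns $u<z$ when $\maj=0$, or $u\ge z$ when $\maj=n$) eliminates the majority of the orderings of $(z_i,z_j,u_i,u_j)$ that must be examined. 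The remaining configurations are matched directly against the explicit patterns of each $\S_i$.

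The chief obstacle is precisely this $\S$-dependent case analysis. Because $\eta_\S$ varies with the total-ordering conventions encoded in each $\S$, the target permutation $u'$ itself depends on $\S$, and a uniform choice valid across all eight sets is not available. The technical content of the proof is to verify, for each equivalence class of $\S$-quadruple sets, that the bubble-sort rule terminates in a bottom row whose $\S$-quadruple contributions agree column-pair-by-column-pair with the bottom $\quinv$-triples of $\sigma$, while preserving the column-wise maj inequality at every intermediate step. The extremal nature of the hypothesis $\maj\in\{0,n\}$ is what keeps this tractable, since it freezes the relative order of the ``top'' and ``bottom'' of each column and rules out the mixed configurations that make the general two-row case substantially harder.
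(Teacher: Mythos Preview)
Your reduction is correct: for $\lambda=(n,n)$ one needs a permutation $u'$ of the bottom row, compatible with the uniform column constraint, so that $\sum_{i<j}\Q_\S(z_i,z_j,u'_i,u'_j)=\sum_{i<j}\Q(z_i,u_i,u_j)$. But the construction you propose---an ``$\S$-adapted bubble sort'' that applies $\delta_i^1$ whenever this ``brings the local contribution closer to the target''---is not a well-defined rule (the local contribution is $\{0,1\}$-valued while the target is a global sum), and you never carry out the case analysis you identify as the core of the argument. You also do not verify that the intermediate steps preserve the column-wise constraint, which is not automatic: a bottom-row swap can turn a descent column into a non-descent. As written, this is an outline rather than a proof.

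The paper's argument is structurally different and sidesteps the per-$\S$ casework almost entirely. It first rewrites the discrepancy as
\[
\quinv(\sigma)-\eta(\sigma)=\sum_{\tau}\bigl(\chi(\tau_1)+\cdots+\chi(\tau_4)\bigr)-\sum_{\tau}\bigl(\chi(\tau_1^*)+\cdots+\chi(\tau_4^*)\bigr),
\]
where each $\tau_i,\tau_i^*$ lies in $a_i$ or in $a_i^r$ and is determined by $\S$. Under $\maj(\sigma)\in\{0,n\}$ every quadruple is neutral, which kills all terms with $i\neq 1$ since $a_2,a_3,a_4$ contain only non-neutral orderings. The surviving $\tau_1,\tau_1^*$ are either both decreasing (if in $a_1$) or both increasing (if in $a_1^r$), so the difference vanishes on the subclass of tableaux whose top row is weakly monotone in the opposite direction; on this base class one takes $\gamma=\mathrm{id}$. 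For an arbitrary $\sigma$ one writes $\sigma=\rho_{i_1}\circ\cdots\circ\rho_{i_k}(\sigma_0)$ with $\sigma_0$ in the base class and sets $\gamma(\sigma)=\delta_{i_1}\circ\cdots\circ\delta_{i_k}(\sigma_0)$; Lemma~\ref{L:top} guarantees that $\quinv$ and $\eta$ change by the same amount under each paired step, so the equality propagates.

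Note the operators here are $\rho_i=\rho_i^{2}$ and $\delta_i=\delta_i^{2}$, acting from the top row downward (and possibly swapping whole columns), not your $\delta_i^1$. The top rows of $\sigma$ and $\gamma(\sigma)$ remain identical because both undergo the same sequence of adjacent transpositions. The point you are missing is that one never needs to locate the target $u'$ by hand: it is produced automatically by running the $\delta$-operators in parallel with the $\rho$-operators from a common fixed point, and the only $\S$-dependent choice is which monotone direction of the top row defines that fixed point.
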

\begin{proof}
	Given any $\eta\in\A^+$, consider the difference of $\quinv(\sigma)$ and $\eta(\sigma)$:
	\begin{align}\label{eqthm1.1}
	\quinv(\sigma)&=\eta(\sigma) +\sum_{\tau\in\sigma,\Q_{\S}(\tau)=0}
	\Q(z,u,v)-\sum_{\tau\in\sigma,\Q(z,u,v)=0}\Q_{\S}(\tau),
	\end{align}
	which is summed over all quadruples $\tau=(z,w,u,v)$ of $\sigma$. We will see that (\ref{eqthm1.1}) equals 
	\begin{align}\label{eqthm1.12}
	\quinv(\sigma)=\eta(\sigma) &+ \sum_{\tau\in\sigma}(\chi(\tau_1)+\chi(\tau_2)+\chi(\tau_3)+\chi(\tau_4))\\
	&-\sum_{\tau\in\sigma}(\chi(\tau_1^*)+\chi(\tau_2^*)+\chi(\tau_3^*)+\chi(\tau_4^*))\notag
	\end{align}
	for any $\S$, where either $\tau_i,\tau_i^*\in a_i$ or $\tau_i,\tau_i^*\in a_i^r$ for all $i$. Here note that $\chi(\tau_i)=1$ if and only if $\tau=(z,w,u,v)$ fulfills the total ordering $\tau_i$; otherwise $\chi(\tau_i)=0$. We present two facts on the sets $\S$ and $a_i$ by Definition \ref{Def:typeqinv}.
	
	(I). For each $a_i$ and $\tau=(z,w,u,v)\in a_i$, the triple $(z,u,v)$ of $\tau$ forms a queue inversion triple if and only if $(z,u,v)$ of $\tau^r$ is {\em not} a queue inversion triple. 
	
	(II). Let $B=\{(z,w,u,v)\mid (z>v\ge w>u), (u\ge z>v\ge w)\}$. If $\tau \in B$, then $(z,u,v)$ is a queue inversion triple by both the total orderings $\tau$ and $\tau^r$.
    
    These two facts are derived by examining all elements of $a_i$ in (\ref{E:seta}) and $B$.  
    We now prove (\ref{eqthm1.12}). Since $\S$ has exactly one element from each $a_i$ for all $i$, 
    say $\omega_i\in \S\,\cap\,a_i$, then $\omega_i^r\in \S\cap a_i^r$. It follows from (I) that exactly one of $\omega$ and $\omega^r$ satisfies $\Q(z,u,v)=0$, that is, only one contributes $1$ to the second sum of (\ref{eqthm1.1}), denote this element by $\tau_i^*$. In other words, the triple $(z,u,v)$ contained in $\tau_i^*$ is not a queue inversion triple and $\Q_{\S}(\tau)=\chi(\tau_i^*)=1$.
    
    Together with (2) of Definition \ref{Def:typeqinv} and $\tau_i^*\in\S$, we are led to $\tau_i\not \in \S$ and the triple $(z,u,v)$ with respect to the ordering $\tau_i$ is a queue inversion triple, which further results in that $\tau_i$ contributes $1$ to the first sum of (\ref{eqthm1.1}), namely $\Q(z,u,v)=\chi(\tau_i)=1$. Besides, (II) ensures that the total contributions of $\tau$ and $\tau^*$ to the sums of (\ref{eqthm1.1}) is zero for $\tau \in B$.
    The statement (\ref{eqthm1.12}) is then obtained by summing over all $i$. We refer to Example \ref{Example:diff} for (\ref{eqthm1.12}) when $\S=\S_2$.
    
    For $\maj(\sigma)=\des(\sigma)=n$, one sees that $\chi(\tau_i)=0$ for all $i\ne 1$ since all quadruples of $\sigma$ are neutral with two descent columns whereas $a_i\,\dot \cup\, a_i^r$ contains only non-neutral ones or neutral ones with two non-descent columns for $i\ne 1$. 
    If $\tau_1$ and $\tau_1^*$ are increasing (or decreasing) quadruples, then $\chi(\tau_1)=\chi(\tau_1^*)=0$ for all $\sigma$ with $\maj(\sigma)=n$ and the top row of $\sigma$ is weakly decreasing (or increasing). This implies
    \begin{align}\label{E:fix}
    \quinv(\sigma)=\eta(\sigma)
    \end{align} 
    according to (\ref{eqthm1.12}). Lemma \ref{L:top} gives that
    \begin{align}\label{E:relation1}
    (\quinv,\maj)(\rho_i(\sigma))=(\eta,\maj)(\delta_i(\sigma)). 
    \end{align}
    This will eventually produce the desired bijection $\gamma$ by which $(\quinv,\maj)(\tau)=(\eta,\maj)(\gamma(\tau))$ for any two-row rectangular tableau $\tau$ with $\maj(\tau)=n$.  Without loss of generality, suppose that $\tau_1$ and $\tau_1^*$ are decreasing quadruples.
    Note that any two-row rectangular filling $\tau$ can be transformed into a two-row filling with weakly increasing top-row entries, say $\sigma$, by implementing a sequence of $\rho_i$'s, i.e.,
    \begin{align}\label{E:pre1}
    \tau=\rho_{i_1}\circ\cdots\circ\rho_{i_k}(\sigma).
    \end{align}
    Then define $\gamma(\tau)=\delta_{i_1}\circ\cdots\circ\delta_{i_k}(\sigma)$ where the top rows of $\tau$ and $\gamma(\tau)$ are identical. Moreover, (\ref{E:relation1}) guarantees that $(\quinv,\maj)(\tau)=(\eta,\maj)(\gamma(\tau))$. By the same strategy,  one can prove the other case that $\maj(\sigma)=\des(\sigma)=0$ and we omit the details.
\end{proof}

%\begin{lemma}\label{L:red2}
%	Theorem \ref{T:1} holds for all two-row rectangular tableaux provided that it is true for two-row rectangular tableaux with weakly increasing or decreasing top-row entries from left to right.
%\end{lemma}
%\begin{proof}
%	Without loss of generality, assume that $\sigma$ is a two-row tableau with weakly increasing top-row entries. Lemma \ref{L:top} claims that $\quinv(\sigma)-\quinv(\rho_i(\sigma))=\eta(\sigma)-\eta(\delta_i(\sigma))$, thus 
%	\begin{align}
%	\quinv(\rho_i(\sigma))=\eta(\delta_i(\sigma)). 
%	\end{align}
%	This eventually produces the desired bijection $\gamma$ by which $\quinv(\tau)=\eta(\gamma(\tau))$ for any tableau $\tau$. To be precise, since any two-row filling $\tau$ is generated from $\sigma$ by implementing a sequence of $\rho_i$'s, say 
%	\begin{align*}
%	\tau=\rho_{i_1}\circ\cdots\circ\rho_{i_k}(\sigma).
%	\end{align*}
%	Then $\gamma(\tau)$ is defined as $\gamma(\tau)=\delta_{i_1}\circ\cdots\circ\delta_{i_k}(\sigma)$ where the top rows of $\tau$ and $\gamma(\tau)$ are identical. Moreover, (\ref{E:relation1}) guarantees that $\quinv(\tau)=\eta(\gamma(\tau))$. 
%\end{proof}

\begin{example}\label{Example:diff}
	For the set $\S_2$ defined in Example \ref{Example:S}, we have
	\begin{align*}
	\quinv(\sigma)&= \eta(\sigma)
	+ \sum_{(z,w,u,v)\in \sigma}\bigg( \chi(v>u\ge w>z)+ \chi(z>v>u\ge w)\notag \\
	&+\chi(w>z>v>u) +\chi(u\ge z>w>v) -\chi(v\ge z>w>u) \notag\\
	&-\chi(z>u>v\ge w)- \chi(u>v\ge w>z)-\chi(w>z>u>v)  \bigg).
	\end{align*}
\end{example}

We are now in a position to finally establish Theorem \ref{T:1} for rectangular tableaux.
\begin{proposition}\label{prop:thm9rec}
	Theorem \ref{T:1} holds for all rectangular diagrams.
\end{proposition}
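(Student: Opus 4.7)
The plan is first to invoke Proposition~\ref{pro1A}, which reduces Theorem~\ref{T:1} for all rectangular diagrams to the two-row case $\lambda=(n,n)$. Lemma~\ref{L:1} already handles the extreme sub-cases $\maj(\sigma)\in\{0,n\}$, so the remaining work is to build the bijection $\gamma$ for two-row rectangles of arbitrary major index.

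To do this, I would decompose the columns of $\sigma$ into maximal contiguous blocks $B_1,\dots,B_m$, each of which consists either entirely of descent columns or entirely of non-descent columns. Since $\sigma\vert_{B_j}$ is itself a two-row rectangular tableau with $\maj\in\{0,|B_j|\}$, Lemma~\ref{L:1} supplies a bijection $\gamma_j$ on $\sigma\vert_{B_j}$ that interchanges the roles of $\eta$ and $\quinv$ while preserving the top row and the descent count inside the block. The bijection $\gamma$ on $\sigma$ is then defined by replacing each $\sigma\vert_{B_j}$ with $\gamma_j(\sigma\vert_{B_j})$ in its original location. Row-equivalence $\gamma(\sigma)\sim\sigma$, identical top rows, and $\des(\gamma(\sigma)\vert_1^2)=\des(\sigma\vert_1^2)$ all follow immediately; since $\lambda$ has only two rows, the last of these also gives $\maj(\gamma(\sigma))=\maj(\sigma)$.

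The identity still to be established is $\eta(\gamma(\sigma))=\quinv(\sigma)$. Splitting both statistics according to whether a triple or an $\S$-quadruple lies inside a single block or straddles two different blocks, the intra-block matchings hold term-by-term by Lemma~\ref{L:1}. For the inter-block portion, the plan is to exploit the fact that the $\rho_i$ and $\delta_i$ moves used by Lemma~\ref{L:1} inside a descent-uniform block operate on two adjacent columns of identical descent status, so they either swap only the top-row entries of those columns or exchange the two columns as a whole; in both scenarios Lemmas~\ref{L:2}, \ref{L:delta}, and \ref{L:top} guarantee that contributions involving any column outside the two-column window are unaffected.

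I expect the main obstacle to lie precisely in this inter-block bookkeeping. Although the multisets of entries inside each block are preserved, the bottom-row permutation carried out by Lemma~\ref{L:1} within a block may in principle alter \emph{individual} inter-block $\S$-quadruples and queue-inversion triples, and the delicate step is to show that all such local variations cancel globally. I anticipate that the argument will follow from a careful case analysis based on the identity~\eqref{eqthm1.12} and the four-class decomposition~\eqref{E:seta}, combined with the closure axioms (1)--(5) of Definition~\ref{Def:typeqinv}; the eight admissible $\quinv$-quadruple sets enumerated in Lemma~\ref{L:setSi} should be handled uniformly, in the same spirit as the per-class analysis performed in the proof of Lemma~\ref{L:1}.
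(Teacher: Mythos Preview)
Your reduction via Proposition~\ref{pro1A} and the use of Lemma~\ref{L:1} on uniform two-row pieces are the right ingredients, but the decomposition you choose and the omission of a preliminary sorting step create a genuine gap in the inter-block part.

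The paper does \emph{not} cut $\sigma$ into maximal contiguous blocks. Instead it first passes to a tableau whose top row is weakly decreasing, and then collects \emph{all} descent columns into $\sigma_{>}$ and \emph{all} non-descent columns into $\sigma_{\le}$, regardless of contiguity, before applying Lemma~\ref{L:1} to each. This buys two things you do not have. First, since every same-type pair of columns already lies inside one of $\sigma_{>}$ or $\sigma_{\le}$, the only cross contributions left are of mixed type; in your contiguous scheme, same-type pairs straddling a block of the opposite type remain uncontrolled and Lemma~\ref{L:1} says nothing about them. Second, the weak monotonicity of the top row, together with the WLOG orientation of $\tau_3,\tau_4$, forces $\chi(\tau_i)=\chi(\tau_i^{*})=0$ for $i=3,4$ in \eqref{eqthm1.12}, and the neutrality of $a_1,a_2$ kills $i=1,2$ for mixed pairs, yielding $\quinv(\sigma_{>},\sigma_{\le})=\eta(\sigma_{>},\sigma_{\le})$ directly (equation~\eqref{E:quin=eta}). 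What then remains is the invariance $\eta(\tau_{>},\tau_{\le})=\eta(\sigma_{>},\sigma_{\le})$, which is not a consequence of Lemmas~\ref{L:2}, \ref{L:delta}, \ref{L:top}: the paper proves it by a bare-hands verification of the identity $\Q_{\S}(z,a,u,b)+\Q_{\S}(w,a,v,b)=\Q_{\S}(z,a,v,b)+\Q_{\S}(w,a,u,b)$ for every relevant ordering (the table in the proof). Only after this is the restriction on the top row lifted via the $\rho_i/\delta_i$ trick of~\eqref{E:gamma_final}.

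Your appeal to Lemmas~\ref{L:2}, \ref{L:delta}, \ref{L:top} cannot close the gap as stated: Lemma~\ref{L:2} tracks $\quinv$ under $\rho_i$, Lemma~\ref{L:delta} tracks $\eta_{\S}$ under $\delta_i$, but nothing there compares the cross-$\quinv$ of $\sigma$ with the cross-$\eta$ of $\gamma(\sigma)$, and the intermediate ``sorted'' tableau inside Lemma~\ref{L:1} does not serve as a common reference once several blocks are being modified simultaneously. Concretely, with your scheme a tableau whose blocks are all singletons has $\gamma=\mathrm{id}$, so you would be asserting $\quinv(\sigma)=\eta(\sigma)$ outright for every two-row $\sigma$ with alternating descent pattern, which is false in general. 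The fix is precisely what the paper does: sort the top row first, group by descent type rather than by position, and replace the hoped-for lemma invocation with the explicit quadruple identity~\eqref{E:Q}.
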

\begin{proof}
It suffices to prove the case that $\lambda=(n,n)$ and $\sigma\in \T(\lambda)$ satisfying $0<\maj(\sigma)=\des(\sigma)<n$ by Proposition \ref{pro1A} and Lemma \ref{L:1}. 

For $i\in \{3,4\}$, since $\tau_i,\tau_i^*$ are elements of $a_i\,\dot\cup\, a_i^r$ in (\ref{eqthm1.12}) and they are non-neutral quadruples, the condition (4) of Definition \ref{Def:typeqinv} guarantees that all $\tau_i$ and $\tau_i^*$ are uniformly decreasing or increasing quadruples.
Without loss of generality, we assume that $\tau_i,\tau_i^*$ are increasing non-neutral quadruples for $i\in\{3,4\}$. For all tableaux $\sigma\in\T(\lambda)$ where the entries along the top row are  weakly decreasing, we must have
\begin{align}\label{E:chi34}
\chi(\tau_i)=\chi(\tau_i^*)=0
\end{align}
for $i\in \{3,4\}$ in \eqref{eqthm1.12}. Let $\sigma_{>}$ and $\sigma_{\le}$ denote the tableaux consisting of the descent columns and non-descent columns of $\sigma$ respectively, then one readily sees that $\maj(\sigma_{>})$ reaches its maximal value and $\maj(\sigma_{\le})=0$. Applying the bijection $\gamma$ in Lemma \ref{L:1} on $\sigma_{>}$ and $\sigma_{\le}$ gives two tableaux $\tau_{>}$ and $\tau_{\le}$ satisfying $\eta(\tau_{>})=\quinv(\sigma_{>})$ and $\eta(\tau_{\le})=\quinv(\sigma_{\le})$.

Define $\gamma(\sigma)$ as the tableau formed by combining $\tau_{>}$ and $\tau_{\le}$ according to the positions of descent and non-descent columns of $\sigma$. That is, column $i$ of $\sigma$ is a descent (or non-descent) column if and only if column $i$ of $\gamma(\sigma)$ is a descent (or non-descent) column. As a result, $\gamma(\sigma)_{>}=\tau_{>}$ and $\gamma(\sigma)_{\le}=\tau_{\le}$.
One notices that $\gamma(\sigma)\sim \sigma$ and  $\maj(\sigma)=\maj(\gamma(\sigma))$. We only need to examine $\quinv(\sigma)=\eta(\gamma(\sigma))$. 

Let $\quinv(\sigma_{>},\sigma_{\le})$ be the number of queue inversion triples $(a,b,c)$ of $\sigma$ such that $\chi(a>b)\ne\chi(d>c)$ where $d$ is right above $c$. Let $\eta(\sigma_{>},\sigma_{\le})$ be the number of $\S$-quadruples $(a,d,b,c)$ of $\sigma$ such that $\chi(a>b)\ne\chi(d>c)$. Returning to \eqref{eqthm1.12}, we have $\chi(\tau_i)=\chi(\tau_i^{*})=0$ for all $i\in \{1,2\}$ when only queue inversion triples and $\S$-quadruples between non-descent and descent columns are taken into account. Together with (\ref{E:chi34}), it follows that 
\begin{align}\label{E:quin=eta}
\quinv(\sigma_{>},\sigma_{\le})=\eta(\sigma_{>},\sigma_{\le}),
\end{align}
which leads to $\quinv(\sigma)=\quinv(\sigma_{>})+\quinv(\sigma_{\le}) + \quinv(\sigma_{>},\sigma_{\le})=\eta(\tau_{>})+\eta(\tau_{\le})+\eta(\sigma_{>},\sigma_{\le})$.
On the other hand, $\eta(\gamma(\sigma))=\eta(\tau_{>})+\eta(\tau_{\le})+\eta(\tau_{>},\tau_{\le})$. Consequently the proof of $\quinv(\sigma)=\eta(\gamma(\sigma))$ boils down to showing
\begin{align}\label{eqpro1A}
\eta(\tau_{>},\tau_{\le})=\eta(\sigma_{>},\sigma_{\le}).
\end{align}
Let us recall that the top rows of $\sigma_{>}$ and $\tau_{>}$ are identical, and the bottom row of $\tau_{>}$ is a rearrangement of the bottom row of $\sigma_{>}$. We transform $\sigma_{>}$ to $\tau_{>}$ and $\sigma_{\le}$ to $\tau_{\le}$ in the following way.

Let $(w,v)$ and $(w,u)$ be the two rightmost distinct columns located at the same position of $\sigma_{>}$ and $\tau_{>}$ respectively, swap the entries $(u,v)$ of the bottom row of $\sigma_{>}$. Repeat this procedure to the second rightmost distinct descent columns until $\sigma_{>}$ becomes $\tau_{>}$. Proceed by considering the two leftmost distinct columns at the same location of $\sigma_{\le}$ and $\tau_{\le}$. Continue to switch the entries of the bottom row of $\sigma_{\le}$ to produce $\tau_{\le}$. We claim that each transposition $(u,v)$ preserves the number $\eta(\sigma_{>},\sigma_{\le})$, which eventually justifies (\ref{eqpro1A}).

Here we only prove the case for descent columns as the other case follows analogously. 
Let $z$ be the entry right above $u$ in $\sigma_{>}$, drawn as below, where all columns $(z,u),(w,v),(w,u)$ are descent columns and the top row are weakly decreasing. Thus we have $z\ge w>\max(u,v)$.
\begin{center}
	\begin{minipage}[H]{0.25\linewidth}
		\begin{ytableau}
			  z  & \none[\dots]   & w     & \none[\dots] & a
			\\
		  u  &\none[\dots]  & v & \none[\dots]  &  b\\
		\end{ytableau}
		\, $\rightarrow$ 
	\end{minipage}
	\begin{minipage}[H]{0.25\linewidth}
		\begin{ytableau}
			 z  & \none[\dots]   & w     & \none[\dots] & a
			\\
			v  &\none[\dots]  & u & \none[\dots]  &  b\\
		\end{ytableau}
	\end{minipage}
\end{center}
Let $(a,b)$ be a non-descent column of $\sigma_{\le}$, which could stand to the right of $(w,v)$, between $(z,u)$ and $(w,v)$, or to the left of $(z,u)$.

Since the case $z=w$ is trivial, we only study the nontrivial case that $z>w>\max(u,v)$ and $w\ge a$. Our goal is to show that 
\begin{align}\label{E:goal}
\sum_{b\ge a}\CMcal{Q}(z,a,u,b)+\CMcal{Q}(w,a,v,b)=\sum_{b\ge a }\CMcal{Q}(z,a,v,b)+\CMcal{Q}(w,a,u,b),
\end{align}
for any fixed $a$ where $\CMcal{Q}(z,x,u,y)=\CMcal{Q}_{\S}(z,x,u,y)$. This is done by verifying that 
\begin{align}\label{E:Q}
\CMcal{Q}(z,a,u,b)+\CMcal{Q}(w,a,v,b)=\CMcal{Q}(z,a,v,b)+\CMcal{Q}(w,a,u,b).
\end{align}
Observe that each quadruple in (\ref{E:Q}) is a non-neutral one and recall that $\tau_i^*\in \S$ for $i=3,4$ are increasing quadruples satisfying $\CMcal{Q}(z,u,v)=0$ for the triple $(z,u,v)$ of $\tau_i^*$. 
It follows that $\S \cap a_3=\{(z,w,u,v) \mid (z>v>u\ge w)\}$ and $\S \cap a_4=\{(z,w,u,v) \mid (u\ge z>w>v)\}$. 

In Table \ref{Tab:1A}, we list all possibilities for $z>w>u>v$ and $w\ge a$. The remaining scenario that $w<a$ or $z>w>v>u$ can be discussed in the same way and we omit the details.
One immediately concludes (\ref{E:Q}) from Table \ref{Tab:1A}.
In addition, (\ref{E:Q}) is also valid for any quadruple $(z,w,u,v)$ of $\sigma_{\le}$ with $\min(u,v)\ge z\ge w$ and any descent pair $(a,b)$ with $a>b$. 

Finally we shall extend the bijection $\gamma$ from two-row tableaux with top-row entries weakly decreasing from left to right, to all two-row tableaux by a similar argument in Lemma \ref{L:1}. To be precise, any two-row tableau $\tau$ can be transformed into a two-row tableau with top-row entries weakly decreasing, say $\sigma$, i.e., $\tau=\rho_{i_1}\circ\cdots\circ\rho_{i_k}(\sigma)$. Then we define 
\begin{align}\label{E:gamma_final}
\gamma(\tau)=\delta_{i_1}\circ\cdots\circ\delta_{i_k}(\gamma(\sigma)).
\end{align}
Lemma \ref{L:top} tells that $\quinv(\sigma)-\quinv(\tau)=\eta(\gamma(\sigma))-\eta(\gamma(\tau))$ and $\maj(\tau)=\maj(\gamma(\tau))$. Finally the relation $\quinv(\sigma)=\eta(\gamma(\sigma))$ leads to $\quinv(\tau)=\eta(\gamma(\tau))$, as wished.
\end{proof}
\begin{table}
	\begin{center}
		\begin{tabular}{ccccccc}
			&  & $(\CMcal{Q}(z,a,u,b), \CMcal{Q}(w,a,v,b))$ &$(\CMcal{Q}(z,a,v,b),\CMcal{Q}(w,a,u,b))$\\
			\hline \\[-1.5ex]
			$b\ge z>w>a>u>v$ &  & $(0,0)$ & $(0,0)$\\[1ex]
			$z>b\ge w>a>u>v$& & $(1,0)$ & $(1,0)$\\[1ex]
			$z>w>b\ge a>u>v$ &  & $(1,1)$ & $(1,1)$ \\[1ex]
			
			$b\ge z>w=a>u>v$& & $(0,0)$ & $(0,0)$\\[1ex]
			$z>b\ge w=a>u>v$ &  & $(1,0)$ & $(1,0)$ \\[1ex]
			
			$b\ge z>w>u\ge a>v$ & & $(0,0)$ & $(0,0)$ \\[1ex]
			$z>b\ge w>u\ge a>v$  &  & $(1,0)$ & $(1,0)$ \\[1ex]
			$z>w>b> u\ge a>v$ &  & $(1,1)$ & $(1,1)$\\[1ex]
			$z>w>u\ge b\ge a>v$& & $(0,1)$ & $(1,0)$\\[1ex]
			
			$b\ge z>w>u>v\ge a$ &  & $(0,0)$ & $(0,0)$ \\[1ex]
			$z>b\ge w>u>v\ge a$ & & $(1,0)$ & $(1,0)$ \\[1ex]
			$z>w>b> u>v\ge a$  &  & $(1,1)$ & $(1,1)$ \\[1ex]
			$z>w>u\ge b> v\ge a$ &  & $(0,1)$ & $(1,0)$\\[1ex]
			$z>w>u>v\ge b\ge a$ &  & $(0,0)$ & $(0,0)$\\[1ex]
			\hline \\[-1.5ex]	
		\end{tabular}
		\caption{The change of $\S$-quadruples when $z>w>u>v$ and $w\ge a$.} 
		\label{Tab:1A}
	\end{center}
\end{table}

%\begin{figure}[H]
% \centering
%	\begin{minipage}[H]{0.49\linewidth}
%\centering
%		\begin{ytableau}
%		z  &\none[\dots] & w
%		\\
%		u &\none[\dots] & v\\
%	\end{ytableau}
%\caption*{type $A$ quadruple: the two columns of $zu$ and $wv$ are of the %same length.}
%	\end{minipage}
%	\begin{minipage}[H]{0.49\linewidth}
%\centering
%		\begin{ytableau}
%			a  &\none[]
%			\\
%			b  &\none[\dots] & c
%		\end{ytableau}
%\caption*{type {\em B} triple: the column of $c$ is shorter than the column of %$ab$.}
%	\end{minipage}
%\end{figure}
%\end{definition}

\begin{example}
	We give an example of Theorem \ref{T:1} for two-row tableaux. Here the statistic $\eta$ is associated to the set $\S$ in Example \ref{Example:S}.	Let
	\begin{align*}
	\centering
	\tau=\begin{ytableau}[] 2&3 &2& 4& 4& 5& 4& 5& 6& 8& 7\\ 3& 5& 1& 3& 7& 6& 5& 1& 4& 2& 6\end{ytableau}\,.
	\end{align*}
    Since $\S\cap a_3=\{(z,w,u,v):z>u>v\ge w\}$, $\S\cap a_3^r=\{(z,w,u,v):w>v>u\ge z\}$ and only one of these two quadruples satisfies $\CMcal(z,u,v)=0$, we have $\tau_3^*\in \S\cap a_3$ and $\tau_3^*$ is a decreasing quadruple. By Proposition \ref{prop:thm9rec}, we first sort the top-row entries of $\tau$ into a weakly increasing sequence. That is, let
	\begin{align*}
		\centering
		\sigma=\rho_2\circ\rho_6\circ\rho_{10}(\tau)
		=\begin{ytableau}[] 2&2 &*(green)3& *(green)4& 4& 4& 5& *(green)5& *(green)6& *(green)7& *(green)8\\ 3& 5& *(green)1& *(green)3& 7& 6& 5& *(green)1& *(green)4& *(green)2& *(green)6\end{ytableau} \,\mbox{ where }
	\end{align*}
	\begin{align*}
		\sigma_{>}=\begin{ytableau}[] 3&4&5 & 6& 7 & 8\\ 1& 3& 1& 4& 2 & 6
		\end{ytableau} \, ,\quad\quad
		\sigma_{\le}=\begin{ytableau}[] 2&2&4 & 4& 5\\ 3& 5& 7& 6& 5
		\end{ytableau}\, .
	\end{align*}
Then we use Lemma \ref{L:1} to find $\gamma(\sigma_{>})$ and  $\gamma(\sigma_{\le})$. Since $\S\cap a_1=\{(z,w,u,v):z>w>v>u\}$ and $\S\cap a_1^r=\{(z,w,u,v):w>z>u>v\}$ where only the second one satisfies $\CMcal{Q}(z,u,v)=0$, we obtain that $\tau_1^*\in \S\cap a_1^r$ and $\tau_1^*$ is an increasing quadruple. According to (\ref{E:pre1}), we permute the top-row entries of $\sigma_{>}$ into a weakly decreasing sequence by $\rho_i$'s, denote the resulting tableau by $\alpha_{>}$ and then generate $\gamma(\sigma_{>})$ by a sequence of $\delta_i$'s. That is, 
	\begin{align*}
		&\begin{ytableau}[] 3&4&5 & 6& 7 & 8\\ 1& 3& 1& 4& 2 & 6
		\end{ytableau} \xrightarrow{\rho_5\circ \rho_4\circ \rho_3 \circ\rho_2\circ \rho_1}
		\begin{ytableau}[] 4&5 & 6& 7 & 8 & 3\\ 3& 1& 4& 1& 6 & 2
		\end{ytableau} \xrightarrow{\rho_4\circ \rho_3 \circ\rho_2\circ \rho_1}
		\begin{ytableau}[] 5 & 6& 7 & 8 & 4& 3\\ 3& 4& 1& 6& 1 & 2
		\end{ytableau}\\
		&\xrightarrow{ \rho_3 \circ\rho_2\circ \rho_1}\begin{ytableau}[] 6& 7 & 8 & 5 &4& 3\\ 3& 4& 6& 1& 1 & 2\end{ytableau}
		\xrightarrow{ \rho_2\circ \rho_1}\begin{ytableau}[] 7 & 8& 6 & 5 &4& 3\\ 3& 6& 4& 1& 1 & 2\end{ytableau}
		\xrightarrow{ \rho_1}\begin{ytableau}[] 8 & 7& 6 & 5 &4& 3\\ 3& 6& 4& 1& 1 & 2\end{ytableau}=\alpha_{>}.
	\end{align*}
	Continue by  applying $\delta_1\circ \delta_2\circ \delta_3 \circ\delta_4\circ \delta_5 \circ  \delta_1\circ\delta_2\circ \delta_3 \circ\delta_4
	\circ\delta_1\circ\delta_2\circ \delta_3       \circ\delta_1\circ\delta_2  \circ\delta_1$ to $\alpha_{>}$ to give
	\begin{align*}
		\begin{ytableau}[] 3 & 4& 5 & 6 & 7& 8\\ 2 &1& 1& 4& 6 & 3
		\end{ytableau}=\gamma(\sigma_{>}).
	\end{align*}
	Repeat this process on $\sigma_{\le}$, that is,
	\begin{align*}
		&\begin{ytableau}[] 2&2& 4 & 4& 5\\ 3& 5& 7& 6& 5
		\end{ytableau} \xrightarrow{\rho_4\circ \rho_3 \circ\rho_2 }
		\begin{ytableau}[] 2& 4 & 4& 5 & 2\\ 3& 5& 7& 6& 5
		\end{ytableau} \xrightarrow{ \rho_3 \circ\rho_2\circ \rho_1}
		\begin{ytableau}[] 4 & 4& 5 & 2 & 2\\ 5& 7& 6& 3& 5
		\end{ytableau}\\
		&\xrightarrow{ \rho_2}\begin{ytableau}[] 4& 5 & 4 & 2 & 2\\ 5& 7& 6& 3& 5 \end{ytableau}
		\xrightarrow{  \rho_1}\begin{ytableau}[] 5 & 4& 4 & 2 & 2\\ 5& 7& 6& 3& 5\end{ytableau}
		=\alpha_{\le},
	\end{align*}
	and then perform $\delta_2\circ \delta_3\circ \delta_4    \circ  \delta_1\circ\delta_2\circ \delta_3
	\circ\delta_2\circ \delta_1$ on $\alpha_{\le}$, obtaining 
		$\begin{ytableau}[] 2 & 2& 4 & 4 & 5 \\ 3 &5& 7& 6& 5
		\end{ytableau}=\gamma(\sigma_{\le})$.
	Consequently $\gamma(\sigma)= \begin{ytableau}[] 2&2 &*(green)3& *(green)4& 4& 4& 5& *(green)5& *(green)6& *(green)7& *(green)8\\ 3&5& *(green)2& *(green)1& 7& 6& 5& *(green)1& *(green)4& *(green)6& *(green)3\end{ytableau}$ 
	is produced by mixing $\gamma(\sigma_{>})$ and $\gamma(\sigma_{\le})$ so that the descent columns are located at the same positions as $\sigma$ (highlighted in green). Finally, (\ref{E:gamma_final}) shows that 
	\begin{align*}
		\gamma(\tau)=\delta_2\circ\delta_6\circ\delta_{10}(\gamma(\sigma))
		=\begin{ytableau}[] 2&3 &2& 4& 4& 5& 4& 5& 6& 8& 7\\ 3& 2& 5& 1& 7& 5& 6& 1& 4& 3& 6\end{ytableau}\, .
	\end{align*}
	One computes that $\quinv(\tau)=\eta(\gamma(\tau))=72$ and $\maj(\tau)=\maj(\gamma(\tau))=6$.
\end{example}
We shall finally establish Theorem \ref{T:4} by extending the bijection $\gamma$ from rectangular tableaux to arbitrary tableaux.

{\em Proof of Theorem \ref{T:4}}. For any tableau $\sigma\in \T(\lambda)$, let $\sigma_j$ be the rectangular tableau of height $j$ in the factorization (\ref{E:sigmadec}). Define  $\tau\in \langle\sigma\rangle$ if $\tau_i\in[\sigma_i]$ for all $i$. Thus by definition, $\langle\sigma\rangle\subseteq [\sigma]$ and hence $[\sigma]=\cup_{\tau\in [\sigma]}\langle\tau\rangle$. In order to prove (\ref{E:eq16}), it is sufficient to show that for $\eta\in \A^+$, 
\begin{align}\label{E:eq162}
\sum_{\tau\in\langle\sigma\rangle}q^{\maj(\tau)}t^{\quinv(\tau)}=\sum_{\tau\in\langle\sigma\rangle}q^{\maj(\tau)}t^{\eta(\tau)}.
\end{align}

A close inspection of the bijection $\gamma:\T(\lambda)\rightarrow \T(\lambda)$ for rectangular diagrams (Proposition \ref{pro1A}, Lemma \ref{L:1} and Proposition \ref{prop:thm9rec}) suggests that $\gamma$ keeps the number of queue inversion triples induced by two columns of unequal heights, written as
\begin{align}\label{E:quinv_diff}
\sum_{(a,b,c)\in\sigma}\CMcal{Q}_{\S}(a,b,c),
\end{align}
invariant for all $\S$. This is true because of the following argument. To discuss the change of the number (\ref{E:quinv_diff}) under $\gamma$, it suffices to consider two-row tableaux $\sigma$. If $\sigma$ is a two-row tableau with top-row entries weakly decreasing or increasing, then 
$\gamma$ swaps the entries $u,v$ of each quadruple $(z,w,u,v)$ only if all columns $(z,u), (w,v)$ and $(z,v),(w,u)$ are descents or non-descents. That is, $\max(z,w)\le \min(u,v)$ or $\min(z,w)>\max(u,v)$. As a direct consequence, $\CMcal{Q}(z,u,v)=\CMcal{Q}(w,u,v)$, implying that (\ref{E:quinv_diff}) is unchanged by $\gamma$. If $\sigma$ is any two-row tableau, then (\ref{E:quinv_diff}) is preserved by $\gamma$ as a result of (\ref{E:gamma_final}) where both $\delta_i,\rho_i$ respect (\ref{E:quinv_diff}) by Lemma \ref{L:2} and the definition of $\delta_i$ (see Definition \ref{Def:2}). 

Define $\gamma(\sigma)=\gamma(\sigma_n) \sqcup \cdots \sqcup \gamma(\sigma_1)\in\langle\sigma\rangle$, we obtain
\begin{align*}
\eta(\gamma(\sigma))&=\sum_{1\le i\le n}\eta(\gamma(\sigma_i))+\sum_{(a,b,c)\in\sigma}\CMcal{Q}_{\S}(a,b,c),\\
&=\sum_{1\le i\le n}\quinv(\sigma_i)+\sum_{(a,b,c)\in\sigma}\CMcal{Q}_{\S}(a,b,c),\\
&=\quinv(\sigma),
\end{align*}
and $\maj(\gamma(\sigma))=\maj(\sigma)$ by Proposition \ref{prop:thm9rec}. That is, (\ref{eqthm1.1A}) is true for arbitrary tableaux where the top row of $\gamma(\sigma)$ as the top row of $\gamma(\sigma_{n})$, is identical to the top row of $\sigma_n$, or equivalently $\sigma$. 
Furthermore, by the bijection $\sigma\mapsto\sigma'$ given in Definition \ref{Def:eta}, we define $\gamma(\sigma')=\gamma(\sigma)'$ and obtain (\ref{eqthm1.1A2}) for non-rectangular diagrams, thus resulting in (\ref{E:eq16}) for $\eta^*\in \A\,\backslash \,\A^+$ by (\ref{E:T1}).

\qed

\section{Canonical and dual canonical tableaux}\label{S:can}
The main task of this section is to introduce the concept of canonical and dual canonical tableaux, which is of central importance to the compact formulas of the modified Macdonald polynomials in Theorem \ref{T:2}.

\begin{definition}[(dual) canonical tableaux]\label{Def:3}
	Let $\sigma=\sigma_n \sqcup \cdots \sqcup \sigma_1\in \T(\lambda)$ where $n=\ell(\lambda)$ and $\sigma_j$ is a filling of the rectangle of $\dg(\lambda)$ with height $j$. Then $\sigma$ is {\em (dual) canonical} if each $\sigma_j$ is (dual) canonical, which is to be defined as below.
	
For a partition $\lambda=(m^n)$ of rectangular shape and any tableau $\sigma\in \T(\lambda)$, we always assume that the entries above the top row of $\sigma$ are all zeros, that is, $\sigma(n+1,j)=0$ for $1\le j\le m$.
Then the filling $\sigma$ is {\em canonical} if the following (I)--(II) are satisfied:
\begin{enumerate}
	\item [(I)] For any $1\le r\le n$ and for all columns $\young(a,b)$ and $\young(a,c)$ of $\sigma\vert_{r}^{r+1}$ where $b$ is to the left of $c$. Then $a>b\ge c$, $c\ge a>b$ or $b\ge c\ge a$.
	\item [(II)] For any $1\le r\le n$ and for any non-descent columns $\young(a,c)$ and $\young(b,d)$ of $(\sigma\vert_{r}^{r+1})_{\le}$ where $a$ is to the left of $b$. If $a\ge b$, then $c\ge d$; otherwise if $a<b$, then $c\le d$. That is, if we rearrange the upper entries $a,b$ in weakly decreasing order from left to right, the corresponding lower entries $c,d$ are also weakly decreasing.
\end{enumerate}

The rectangular tableau $\sigma$ is {\em dual canonical} if (I) and the condition obtained from (II) by substituting non-descent columns of $(\sigma\vert_{r}^{r+1})_{\le}$ by descent columns of $(\sigma\vert_{r}^{r+1})_{>}$ are satisfied.

%(III) For any descent columns $\young(a,c)$ and $\young(b,d)$ of $(\sigma\vert_{r}^{r+1})_{>}$ where $a$ is to the left of $b$. If $a\ge b$, then $c\ge d$; otherwise if $a<b$, then $c\le d$. 

%The condition (III) becomes condition (II) of Definition \ref{Def:3} if descent columns $(\sigma\vert_{r}^{r+1})_{>}$ is substituted by non-descent columns $(\sigma\vert_{r}^{r+1})_{\le}$. 
\end{definition}
%\begin{remark}\label{Rem:2}
%We provide equivalent definitions of $\{\quinv,\inv\}$-canonical tableaux as follows. Let $\mu=(m,m)$ be a two-row rectangular diagram, and let $\pi:\T(\mu)\rightarrow \T(\mu)$ be a map that permutes the columns of a tableau $\tau\in \T(\mu)$ as below: 

%Move all descent columns $\tau_{>}$ to the front of non-descent columns $\tau_{\le}$, and sort descent (or non-descent) columns in weakly decreasing order of the top-row entries. Then $\sigma\in \T(\lambda)$ is a $\quinv$-canonical tableau if and only if  $\pi(\sigma\vert_{r}^{r+1})$

%for all $1\le r\le n$, 

%by introducing a total order. Consider all non-descent pairs between two consecutive rows, say $(a_i,q_i)$ with $a_i\le q_i$ for $i\in [m]$. For $i,j\in [m]$, define $i\prec j$ if and only if $a_i>a_j$ or ($a_i=a_j$ and $i<j$).
%Clearly this total order $\prec$ gives a permutation $\pi=\pi(1)\cdots \pi(m)$ of $[m]$, where $i$ is the $\pi(i)$-th smallest element of the ordered set $([m],\prec)$. As a result, (III) is equivalent to require that 
%\begin{align*}
%q_{\pi^{-1}(1)}\ge q_{\pi^{-1}(2)}\ge \cdots\ge q_{\pi^{-1}(m)}.
%\end{align*}
%Similarly we define a total order determined by the lower row $(q_i)_{i=1}^{m}$, that is, $i\prec j$ if and only if $q_i<q_j$ or ($q_i=q_j$ and $i<j$). Therefore, (iii) is equivalent to the condition that
%\begin{align*}
%a_{\pi^{-1}(1)}\le a_{\pi^{-1}(2)}\le \cdots\le a_{\pi^{-1}(m)}.
%\end{align*}
%\end{remark} 

\begin{example}\label{Eg:5}
	A canonical tableau $\sigma$ and a dual canonical tableau $\bar{\sigma}$ are displayed as below where the top row is always filled with weakly decreasing entries from left to right. 

	\begin{align*}
	\sigma=\begin{ytableau}[] 9 &9& 8& 8& 8& 5& 5& 5& 5& 3& 3\\   4& 3& 6 &3 &9 &4& 3& 2& 9& 2& 6 \\ 2 &2 &8 &7 &8 &1 &6 &4 &6& 2 &7
	\end{ytableau}
	\end{align*}
	\begin{align*}
	\bar{\sigma}=\begin{ytableau}[] 9 &9& 8& 8& 8& 5& 5& 5& 5& 3& 3\\   6& 4& 4 &3 &9 &3& 3& 2& 9& 2& 6 \\ 8 &2 &2 &1 &8 &7 &6 &4 &6& 2 &7
	\end{ytableau}
	\end{align*}
\end{example}
We will create a family of tableaux $\CMcal{G}(\sigma)$ by applying a sequence of the operators $\delta_i^r$ to a fixed canonical tableau $\sigma$. With this in mind, we shall introduce the concept of blocks, which is different from the ones \cite{CHO22} in that we separate descent columns and non-descent columns of two consecutive rows.
In the sequel we assume that the largest entry of all the fillings under consideration is $N$.
\begin{definition}[three types of blocks]\label{Def:block}
	For $\sigma\in\T(\lambda)$, set $\sigma(\lambda_i'+1,i)=0$ and $\sigma(0,i)=\infty$ for all $i$. Let $p=(p_1,\ldots,p_k)$ be a sequence (not necessarily contiguous) of entries in row $r$ of $\sigma$, a sequence $a=(a_1,\ldots,a_k)$ is called the {\em upper neighbor} of $p$ if $a_i$ is the entry right above $p_i$ in $\sigma$ for all $1\le i\le k$. Then
	\begin{itemize}
		\item $p$ is a {\em descent block} if $p$ is a maximal-by-inclusion sequence such that $a_i>p_i$ for all $i$, denoted by $a>p$;\\
		\vspace{-2mm}
		\item $p$ is a {\em non-descent block} if $p$ is a maximal-by-inclusion sequence such that $a_i\le p_i$ for all $i$, denoted by $a\le p$;\\
		\vspace{-2mm}
		\item $p$ a {\em neutral block} if $p$ is a maximal-by-inclusion sequence such that $a_i=a_j$ for all $i\ne j$.
	\end{itemize}

\end{definition}
\begin{example}
	Let $\sigma$ be the canonical tableau in Example \ref{Eg:5}. Consider the filling $\sigma\vert_{1}^{2}$ where the columns $(3,2),(3,7),(3,6)$ gives a neutral block $(2,7,6)$. Furthermore, the columns $(6,8)$, $(3,7)$, $(3,6)$, $(2,4)$, $(2,2)$ and $(6,7)$ are all non-descent columns, the sequence $(8,7,6,4,2,7)$ is thus a non-descent block. 
\end{example}
Given a sequence $p=(p_1,\ldots,p_k)$ and any permutation $\omega\in S_k$, we simply write
\begin{align}\label{E:piep}
\omega(p)=(p_{\omega^{-1}(1)},\ldots,p_{\omega^{-1}(k)}).
\end{align}
We shall introduce two permutations $\pi^{\bullet}$ and $\pi^{\diamond}$ to rearrange the entries of a block.

\begin{definition}(two permutations $\pi^{\bullet}$ and $\pi^{\diamond}$)\label{Def:perm2}
Let $\sigma\in \T(\lambda)$ for $\lambda=(k,k)$ given in Figure \ref{Fig:1}.
\begin{figure}[ht]
	\centering
	\includegraphics[scale=0.4]{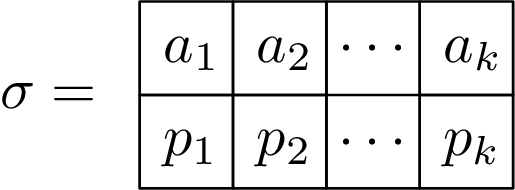}
	\caption{A two-row rectangular tableau}
	\label{Fig:1}
\end{figure}

Let $\pi^{\bullet}$ be the permutation of $[k]$ that sorts all entries of $a=(a_1,\ldots,a_k)$ into a weakly decreasing sequence $\pi^{\bullet}(a)$ such that if $a_i=a_j$ with $i<j$, then $\pi^{\bullet}(i)<\pi^{\bullet}(j)$.

Let $\pi^{\diamond}$ be the permutation of $[k]$ such that $\pi^{\diamond}(a)$ starts with entries of $a$ in descent columns of $\sigma$ in the left-to-right order, followed by the entries of $a$ in non-descent columns in weakly decreasing order such that if $a_i=a_j\le \min(p_i,p_j)$ with $i<j$, then $\pi^{\diamond}(i)<\pi^{\diamond}(j)$.

%defined by $\sort^{\circ}(i)<\sort^{\circ}(j)$ if and only if ($a_i=a_j$ and $i<j$) or $a_i>a_j$. 
%defined in Lemma \ref{L:equ2Def} and we shall extend it to a permutation of $[k]$ by defining 
%$\pi^{\circ}(k-m+i)=\pi^{\circ}(i)+k-m$ for $1\le i\le m$ and $\pi^{\circ}(x)=x$ for $1\le x\le k-m$. That is, 

\end{definition}

\begin{example}
	
	Consider the entries of the second row $a=(4,3,6,3,9,4,3,2,9,2,6)$ in Example \ref{Eg:5},
	then $\sort^{\diamond}=1268349\,10\,5\,11\,7$ in one-line notation maps the sequence $a$ into $\sort^{\diamond}(a)=(4,3,9,4,9,6,6,3,3,2,2)$ and $\pi^{\bullet}=5738169\,10\,2\,11\,4$ in one-line notation so that $\pi^\bullet(a)=(9,9,6,6,4,4,3,3,3,2,2)$ is weakly decreasing.
	
	%which also transforms the entries of the bottom row $p=(2,2,8,7,8,1,6,4,6,2,7)$ into $\sort^{\circ}(p)=(8,7,7,6,4,2)$ in weakly decreasing order.
	
	%If we permute these non-descent columns by a weakly decreasing order of the upper entries, we obtain $\young(6,8)$, $\young(6,7)$, $\young(3,7)$, $\young(3,6)$, $\young(2,4)$ and $\young(2,2)$, in which the lower entries $(8,7,7,6,4,2)$ are also weakly decreasing.
\end{example}

Definition \ref{Def:block} implies that each row has exactly one non-descent block (if any), while it may contain different disjoint neutral blocks. The Lemma below describes a criterion to identify a non-descent block of a canonical tableau $\sigma$ by $\pi^{\diamond}$, which is equivalent to (II) of Definition \ref{Def:3}.

\begin{lemma}\label{L:equ2Def}
A two-row rectangular tableau $\sigma$ defined in Figure \ref{Fig:1} with $a\le p$ is canonical if and only if the sequence $\pi^{\diamond}(p)$
weakly decreases.
\end{lemma}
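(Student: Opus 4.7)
The plan is to reduce both directions of the equivalence to condition~(II) of Definition~\ref{Def:3}, which then becomes essentially a bookkeeping check against the sorting rule built into $\pi^{\circ}$. The hypothesis $a\le p$ forces every column of $\sigma$ to be non-descent, so Definition~\ref{Def:perm2} specializes: $\pi^{\circ}$ is exactly the permutation that sorts $a$ weakly decreasingly, with ties broken so that if $a_i=a_j$ and $i<j$ then $\pi^{\circ}(i)<\pi^{\circ}(j)$. Moreover, condition~(I) restricted to columns with $a_i=a_j$ (and $i<j$) can only be satisfied by $p_i\ge p_j\ge a_i$, since $a_i\le p_i$ rules out the other two alternatives; but this is exactly the $a=b$ subcase of~(II). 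So the canonicity of $\sigma$ is equivalent to condition~(II) alone.

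For the ``only if'' direction I will pick two consecutive positions $l,l+1$ in the sorted sequence, set $i=(\pi^{\circ})^{-1}(l)$ and $j=(\pi^{\circ})^{-1}(l+1)$, so that $a_i\ge a_j$, and deduce $p_i\ge p_j$ in each of three subcases: (a) $a_i>a_j$ with $i<j$: apply the $a\ge b$ clause of (II) directly to the pair of columns $i,j$; (b) $a_i>a_j$ with $i>j$: apply the $a<b$ clause of (II) at the pair of columns $j,i$, giving $p_j\le p_i$; (c) $a_i=a_j$: the tie-breaking rule of $\pi^{\circ}$ forces $i<j$, and the $a\ge b$ clause of (II) gives $p_i\ge p_j$.

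For the ``if'' direction I will take arbitrary columns $i<j$ and verify~(II) directly. If $a_i\ge a_j$, the construction of $\pi^{\circ}$ (weakly-decreasing sort with tie-breaking) gives $\pi^{\circ}(i)<\pi^{\circ}(j)$, so the assumption that $\pi^{\circ}(p)$ is weakly decreasing yields $p_i\ge p_j$. If $a_i<a_j$, the same construction gives $\pi^{\circ}(j)<\pi^{\circ}(i)$, and monotonicity of $\pi^{\circ}(p)$ yields $p_j\ge p_i$. Condition~(I) is recovered as the $a_i=a_j$ specialization of the first case.

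The argument is essentially a translation between two indexings; I do not expect any real obstacle. The only mild subtlety is to ensure that checking ``weakly decreasing'' pointwise on adjacent entries of $\pi^{\circ}(p)$ is enough to recover (II) for arbitrary, not necessarily adjacent, pairs of columns---but this is automatic, since a weakly decreasing sequence satisfies $\pi^{\circ}(p)_l\ge\pi^{\circ}(p)_m$ for every $l\le m$.
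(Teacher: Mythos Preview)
Your proof is correct and follows essentially the same route as the paper's: both reduce canonicity to condition~(II) and then observe that (II) is exactly the statement that $p_i\ge p_j$ whenever $\pi^{\circ}(i)<\pi^{\circ}(j)$, which is precisely weak decrease of $\pi^{\circ}(p)$. Your version is more explicit in two respects---you spell out the case analysis on adjacent positions, and you actually address condition~(I), noting that under $a\le p$ it collapses to the $a_i=a_j$ subcase of~(II)---whereas the paper compresses the argument to a single sentence and silently absorbs~(I) into~(II).
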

\begin{proof}
	The condition (II) of Definition \ref{Def:3} holds if and only if $p_i\ge p_j$ for all $a_i>a_j$ or ($a_i=a_j$ and $i<j$), that is for all $1\le \sort^{\diamond}(i)<\sort^{\diamond}(j)\le k$. This is equivalent to say that $\sort^{\diamond}(p)$ is weakly decreasing.
\end{proof}

Let us next review some classical results on multiset permutations, mostly following the notations from \cite{CHO22}.
\begin{definition}(permutation)\label{Def:reduced}
	The symmetric group $S_k$ is generated by simple transpositions $s_i=(i,i+1)$ for $1\le i<k$. Each permutation can be written as a product of simple transpositions and $\tilde{w}=s_{i_{\ell}}\cdots s_{i_1}$ is {\em reduced} if $\ell$ is the smallest number of simple transpositions needed to express any permutation $w$. 
	We call $\ell$ the length of $\tilde{w}$ and denote it by $\ell(\tilde{w})$. 
	For simplicity, we use $\tilde{w}$ to be the reduced expression of a permutation.
	
	The length of a permutation can be computed by counting inversions \cite{BB:05}, i.e., $\ell(\tilde{w})=\inv(w)$. 
	For any sequence $p=(p_1,\ldots,p_k)$ of positive entries, define
    \begin{align*}
    \mathsf{Sym}(p)=\{w=(w_{1},\ldots,w_{k}): w \textrm{ is a rearrangement of the elements of }p\}.
    \end{align*}
    For $w=(w_{1},\ldots,w_{k})\in\mathsf{Sym}(p)$, let $\tilde{w}$ be the shortest permutation of $[k]$ such that $\tilde{w}(p)=w$.
    %\begin{align}\label{E:tildew}
    %\tilde{w}(p)%=(p_{\tilde{w}^{-1}(1)},\ldots,p_{\tilde{w}^{-1}(k)})
    %=(w_{1},\ldots,w_{k}).
    %\end{align}
    Define that $\widetilde{\mathsf{Sym}}(p)=\{\tilde{w}:w\in\mathsf{Sym}(p)\}$. 
    Let $L_p(w)$ denote the length $\ell(\tilde{w})$ of $w$ with respect to the initial sequence $p$. Then $L_p(w)=\inv(w)$ if $p$ is weakly increasing.   
\end{definition}
\begin{remark}\label{Rem:5}
Let $p^r=(p_k,\ldots,p_1)$ be the reverse of $p=(p_1,\ldots,p_k)$.
If $p$ is weakly decreasing, then $p^r$ is weakly increasing, thus $\tilde{w}(p)=w$ if and only if $\tilde{w}(p^r)=w^r$, 
which further leads to $L_{p}(w)=L_{p^r}(w^r)=\ell(\tilde{w})=\inv(w^r)=\coinv(w)$, where $\coinv$ is defined in Remark \ref{Rem:coinv}. 
\end{remark}

%\begin{remark}
%	The {\em braid relation} of each operator $u_i$ refers to the equality $u_iu_{i+1}u_i=u_{i+1}u_iu_{i+1}$. For instance, $s_i=(i\,i+1)$ has the braid relation. Further, $\delta_i^r$ satisfies the braid relation, meaning that $\E_{\tilde{w}}^r$ is independent of the reduced expressions $\tilde{w}$.
%\end{remark}

We now relate reduced expression $\tilde{w}$ of a permutation with the flip operators $\delta_i^r$, to determine the changes of statistics $\eta$ under the rearrangements of entries in each block. 

\begin{definition} ($\A$-flip operator associated with a permutation)\label{Def:delta}
	Given a reduced expression $\tilde{w}=s_{i_{\ell}}\cdots s_{i_1}$, set
	\begin{align*}
	\Delta_{\tilde{w}}^r=\delta_{i_{\ell}}^r\circ \cdots\circ \delta_{i_1}^r.
	\end{align*} 
	Here $\Delta_{\tilde{w}}^r$ is irrelevant of the choice of reduced expressions of $\tilde{w}$ as a result of Lemma \ref{L:braid}.
	
	For a rectangular tableau $\tau\in\T(\lambda)$ with $\lambda=(k^n)$ where $\tau(n+1,j)=0$ for $1\le j\le k$, let $p=(p_1,\ldots,p_k)=\tau\vert_r^r$ and $a=(a_1,\ldots,a_k)=\tau\vert_{r+1}^{r+1}$ for some $1\le r\le n$.
	%Suppose that $\des(\tau\vert_{r}^{r+1})=k-m$, 
	%Let $Q$ be the unique non-descent block of $\tau\vert_{r}^{r}$, which is also a subsequence of $p$. 
	
	For each $w=(w_1,\ldots,w_k)\in \mathsf{Sym}(p)$, let $\tilde{w}$ be the shortest permutation such that $\tilde{w}(p)=w$. The permutation $\tilde{w}=\tilde{v}\tilde{u}$ is factorized into two independent permutations $\tilde{u},\tilde{v}$ such that $\tilde{u}$ preserves the relative ordering of non-descent columns formed by any neutral block of $p$ and its upper neighbor, while $\tilde{v}$ changes such order.  Let 
	\begin{align}\label{E:delta}
	\delta_{\tilde{w}}^r(\tau)=\delta_{\tilde{v}}^r\circ\delta_{\tilde{u}}^r(\tau),
	\end{align}
	thus it suffices to define $\delta_{\tilde{w}}^r(\tau)$ for $w=u$ and $w=v$ separately.
	
	Let $\star=\bullet\chi(w=u)+\diamond\chi(w=v)$, that is, $\star\in \{\bullet,\diamond\}$ depending on the choices of $w=u$ or $w=v$.
	The new tableau $\delta_{\tilde{w}}^r(\tau)$ is obtained by replacing $\tau\vert_{1}^{r+1}$ by
	\begin{align}\label{E:delta2}
	\Delta_{(\pi^{\star})^{-1}}\circ\Delta_{\tilde{w}}^r\circ\Delta_{\pi^{\star}}(\tau\vert_{1}^{r+1}),
	\end{align}
    where $\delta_{\tilde{w}}=\delta_{\tilde{w}}^r$ and $\Delta_{\tilde{w}}=\Delta_{\tilde{w}}^r$ if $r=\lambda'_i$. 
\end{definition}	

\begin{example}
	We continue with our running example $\sigma$ in Example \ref{Eg:5}. Consider the bottom row $p=(2,2,8,7,8,1,6,4,6,2,7)$ and the second row $a=(4,3,6,3,9,4,3,2,9,2,6)$. Let $w=(1,7,8,6,6,2,2,4,8,2,7)\in\sym(p)$, and the two-row tableau formed by $a$ and $w$ is
	\begin{align*}
	\begin{ytableau}[] 4& 3& 6 &3 &9 &4& 3& 2& 9& 2& 6 \\ 1 &7 &8 &6 &6 &2 &2 &4 &8& 2 &7
	\end{ytableau}\,.
	\end{align*}   
	%The unique non-descent block of $\sigma\vert_1^1$ is $Q=({\bf 8},7,6,4,2,{\bf 7})$ where 
	 Compare it with $\sigma\vert_1^2$, since the relative orderings of columns $(6,8),(6,7)$, columns $(3,7),(3,6)$ and columns $(2,4)$, $(2,2)$ are preserved by the map from $p$ to $w$, we have $\star=\bullet$ in (\ref{E:delta2}). By Definition \ref{Def:perm2}, $\pi^{\bullet}=5738169\,10\,2\,11\,4\in S_{11}$ in one-line notation so that  $\pi^\bullet(a)=(9,9,6,6,4,4,3,$ $3,3,2,2)$ and $\pi^{\bullet}(p)=(8,6,8,7,2,1,2,7,6,4,2)$. As a consequence, 
	\begin{align*}
	\Delta_{\pi^\bullet}(\sigma\vert_{1}^2)=\begin{ytableau}[]  9& 9& 6& 6& 4 &4 &3& 3& 3& 2& 2 \\ 8 &6 & 8 &7 &2 &1 &2  &7 &6& 4 &2
	\end{ytableau}\,.
	\end{align*}
	Furthermore, $\pi^{\bullet}(w)=(6,8,8,7,1,2,7,6,2,4,2)$ is also generated by permuting the entries of $\pi^{\bullet}(p)$ via the shortest permutation
	$\tilde{w}=(12)(56)(798)=s_1s_5s_{8}s_{7}$. %Note that $\tilde{w}$ is also the shortest permutation to transform $\pi(p)$ into $\pi(w)$.
	Implementing $\Delta_{\tilde{w}}^1$ on $\Delta_{\pi^{\bullet}}(\sigma\vert_{1}^2)$ produces
	\begin{align*}
	\tau'=\begin{ytableau}[]  9& 9& 6& 6& 4 &4 &3& 3& 3& 2& 2 \\ 6 &8 & 8 &7 &1 &2 &7  &6 & 2 & 4 &2
	\end{ytableau}\,.
	\end{align*}
	Finally, the top row of $\Delta_{(\pi^{\bullet})^{-1}}(\tau')$ becomes the sequence $a=(4,3,6,3,9,4,3,2,9,2,6)$ again.
	\begin{align*}
	\Delta_{(\pi^{\bullet})^{-1}}(\tau')=\begin{ytableau}[]  4& 3& 6& 3& 9 &4 &3& 2& 9& 2& 6 \\ 1 &7 & 8 & 6 & 6 &2 & 2  &4 & 8 & 2 &7
	\end{ytableau}\,.
	\end{align*}
	Substituting $\sigma\vert_{1}^2$ by the above one gives
	\begin{align*}
	\delta_{\tilde{w}}^1(\sigma)=\begin{ytableau}[] 9 &9& 8& 8& 8& 5& 5& 5& 5& 3& 3\\   4& 3& 6 &3 &9 &4& 3& 2& 9& 2& 6 \\ 1 &7 & 8 & 6 & 6 &2 & 2  &4 & 8 & 2 &7
	\end{ytableau}\, .
	\end{align*}
\end{example}
As we see from Definition \ref{Def:delta}, the permutation $\tilde{u}$ sorts the entries of a neutral block $P$, while keeping the relative order of the ones from non-descent columns. That means $\tilde{u}$ belongs to a subset of $\widetilde{\mathsf{Sym}}(P)$, which is to be described as below.

\begin{definition}
	For a canonical tableau $\sigma$ of a rectangular diagram, let $p=(p_1,\ldots,p_m)$ be a neutral block of $\sigma$ with upper neighbor $(a,\ldots,a)$. Then let
	\begin{align*}
	\alpha(p)=(\alpha(p_1),\ldots,\alpha(p_m))
	\end{align*}
	be the sequence obtained from $p$ by replacing each $p_i$ by $\alpha(p_i)=a$ if $p_i\ge a$. 
\end{definition}
Clearly $\widetilde{\mathsf{Sym}}(\alpha(p))\subseteq \widetilde{\mathsf{Sym}}(p)$ and $\tilde{u}\in \widetilde{\mathsf{Sym}}(\alpha(p))$ in (\ref{E:delta}).
\begin{example}
	For the canonical filling $\sigma$ of Example \ref{Eg:5}. Let $p=(6,3,9)$ be the neutral block of the second row of $\sigma$ with upper neighbor $(8,8,8)$, then $\alpha(p)=(6,3,8)$. 
\end{example}
We are now ready to produce the family $\CMcal{G}(\sigma)$ of tableaux by permuting the entries of each block of a canonical tableau $\sigma$.

\begin{definition}(the family $\CMcal{G}(\sigma)$ of tableaux)\label{D:family}
	For a partition $\lambda=(k^n)$ and a canonical tableau $\sigma\in\T(\lambda)$ with $\sigma(n+1,j)=0$ for $1\le j\le k$, let $\CMcal{G}(\sigma;0)=\{\sigma\}$, and we recursively define $\CMcal{G}(\sigma;r)$ from $\CMcal{G}(\sigma;r-1)$ for $1\le r\le n$ as follows. 
	
	For any $\tau\in \CMcal{G}(\sigma;r-1)$, let $p=(p_1,\ldots,p_k)=\tau\vert_{r}^r$ with upper neighbor $a=(a_1,\ldots,a_k)$. Suppose that $P_1,\ldots,P_{m}$ are disjoint neutral blocks in row $r$ of  $\Delta_{\pi^{\bullet}}(\tau\vert_{1}^{r+1})$, define
     \begin{align*}
     \widetilde{\sym}(P)&=\widetilde{\sym}(\alpha(P_1))\times\cdots\times \widetilde{\sym}(\alpha(P_{m}))\\
     &=\{\tilde{u}: u\in \sym(\alpha(P_1))\times \cdots \times \sym(\alpha(P_m))\}.
     \end{align*}
     Let $Q$ be the unique non-descent block in row $r$ of $\Delta_{\pi^{\diamond}}(\tau\vert_{1}^{r+1})$,
     and let $\sym^*(Q)$ be the set of permutations $\tilde{v}$ of $[k]$ that only sort the elements of $Q$ and the block $Q$ after rearrangement is still a non-descent block.
     
	Define $\CMcal{G}(\sigma;r)=\{\delta_{\tilde{w}}^r(\tau): \tau\in \CMcal{G}(\sigma;r-1), \tilde{w}=\tilde{v}\tilde{u}\in \sym^*(Q)\times \widetilde{\sym}(P)\}$. Finally $\CMcal{G}(\sigma)=\CMcal{G}(\sigma;n)$. For non-rectangular canonical tableau $\sigma$, say $\sigma=\sigma_n \sqcup \cdots \sqcup \sigma_1$, where $\sigma_j$ is a canonical rectangular tableau of height $j$, let $\CMcal{G}(\sigma)=\{\tau: \tau_i\in \CMcal{G}(\sigma_i),\, 1\le i\le n\}$ where $\tau=\tau_n\sqcup \cdots \sqcup \tau_1$ and simply write
	\begin{align}\label{E:nonrec}
	\CMcal{G}(\sigma)=\CMcal{G}(\sigma_n) \sqcup  \cdots \sqcup \CMcal{G}(\sigma_1). 
	\end{align}
\end{definition}
The family $\CMcal{G}(\sigma)$ can be modified to define $\CMcal{G}(\bar{\sigma})$ for dual canonical tableaux $\bar{\sigma}$. We adopt the notations from Definitions \ref{Def:3}, \ref{Def:delta}--\ref{D:family}, and only mention the differences between $\CMcal{G}(\sigma)$ and $\CMcal{G}(\bar{\sigma})$. 

Let $\pi^{\dagger}$ be the permutation of $[k]$ such that $\pi^{\dagger}(a)$ starts with entries of $a$ in descent columns of $\sigma$ in weakly decreasing order such that if $a_i=a_j> \max(p_i,p_j)$ with $i<j$, then $\pi^{\dagger}(i)<\pi^{\dagger}(j)$, followed by the entries of $a$ in non-descent columns in the left-to-right order. 

For a rectangular dual canonical tableau $\bar{\sigma}\in\T(\lambda)$ where $\lambda=(k^n)$ and $\bar{\sigma}(0,j)=\infty$ for $1\le j\le k$, let $\star=\bullet\chi(w=u)+\dagger\chi(w=v)$ in (\ref{E:delta2}). Define $\bar{\alpha}(p)$ to be the sequence obtained from $p$ by substituting each $p_i$ by $a$ if $p_i<a$. 
The family $\CMcal{G}(\bar{\sigma})$ of tableaux is constructed by replacing non-descent blocks by descent blocks and substituting $\alpha(P_i)$ by $\bar{\alpha}(P_i)$ in Definition \ref{D:family}. 

\section{Proof of Theorem \ref{T:2}}\label{S:thm3}
In this section we validate the compact formulas of the modified Macdonald polynomials in Theorem \ref{T:2}, which follows from the theorem below.
\begin{theorem}\label{P:can11}
	Let $\CMcal{C}_{\diamond}(\lambda)$ and $\CMcal{C}_{\dagger}(\lambda)$ be the sets of canonical and dual canonical tableaux of a given shape $\dg(\lambda)$, respectively. Then for $\varepsilon\in \{\diamond,\dagger\}$,
	\begin{align}\label{E:g11}
	\dot{\bigcup}_{\sigma\in\CMcal{C}_{\varepsilon}(\lambda)}
	\CMcal{G}(\sigma)=\T(\lambda).
	\end{align}
	For any $\eta\in \A^+$  such that $(v>u\ge z>w)\in \S \cap a_2$ and for $\sigma\in\CMcal{C}_{\diamond}(\lambda)$, we have 
	\begin{align}\label{E:count11}
	\sum_{\tau\in\CMcal{G}(\sigma)}q^{\maj(\tau)}t^{\eta(\tau)}
	&=q^{\maj(\sigma)}t^{\eta(\sigma)}d_{\diamond}(\sigma),\\
	\label{E:count12}\sum_{\tau\in\CMcal{G}(\sigma)}q^{\maj(\tau')}t^{\eta^*(\tau')}
	&=q^{\maj(\sigma')}t^{\eta^*(\sigma')}d_{\diamond}(\sigma'),
	\end{align}
    where $\tau'$ is given in Definition \ref{Def:eta}.
For any $\eta\in \A^+$  such that $(z>w>v>u)\in \S \cap a_1$ and for $\sigma\in\CMcal{C}_{\dagger}(\lambda)$, we have 
\begin{align}\label{E:count13}
	\sum_{\tau\in\CMcal{G}(\sigma)}q^{\maj(\tau)}t^{\eta(\tau)}
	&=q^{\maj(\sigma)}t^{\eta(\sigma)}d_{\dagger}(\sigma),\\
	\label{E:count14}\sum_{\tau\in\CMcal{G}(\sigma)}q^{\maj(\tau')}t^{\eta^*(\tau')}
	&=q^{\maj(\sigma')}t^{\eta^*(\sigma')}d_{\dagger}(\sigma'),
\end{align}
where $d_{\varepsilon}(\sigma)$ is an explicit $t$-multinomial for $\varepsilon\in \{\diamond,\dagger\}$ such that $d_{\varepsilon}(\sigma)=d_{\varepsilon}(\sigma_n) \cdots d_{\varepsilon}(\sigma_1)$ for $\sigma=\sigma_n \sqcup \cdots \sqcup \sigma_1$ in (\ref{E:sigmadec}).
\end{theorem}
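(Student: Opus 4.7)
The plan is to prove Theorem~\ref{P:can11} in three stages: first the disjoint-union decomposition (\ref{E:g11}), then the generating-function identity (\ref{E:count11}) for canonical tableaux, and finally (\ref{E:count12})--(\ref{E:count14}) by transfer arguments.

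For (\ref{E:g11}), I would produce a canonicalization algorithm by reversing the construction in Definition~\ref{D:family}. Given $\tau\in\T(\lambda)$, I would work on each rectangular component $\tau_j$ separately, as permitted by (\ref{E:nonrec}), and within each rectangle process rows from the top down. At row $r$, Lemma~\ref{L:equ2Def} characterizes the canonical configuration: the unique non-descent block $Q$ must be arranged so that $\pi^{\circ}(Q)$ is weakly decreasing, and each neutral block $P_i$ must be arranged so that $\pi^{\bullet}(P_i)$ is weakly decreasing. Applying the involution $\delta_{\tilde{w}^{-1}}^{r}$ for the unique $\tilde{w}\in\sym^{*}(Q)\times\widetilde{\sym}(P)$ that performs this rearrangement produces the canonical representative; the braid relations from Lemma~\ref{L:braid} ensure independence from the choice of reduced expression. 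Uniqueness is forced because two distinct canonical tableaux already differ in their sorted signatures, which no $\delta_{\tilde{w}}^{r}$ operator can reconcile. Thus $\{\CMcal{G}(\sigma):\sigma\in\CMcal{C}_{\circ}(\lambda)\}$ partitions $\T(\lambda)$; the dual canonical case is parallel, using condition (III) of Definition~\ref{Def:3} in place of (II).

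For (\ref{E:count11}), I would induct on rows within each rectangular component. The inductive step at row $r$ rests on three claims: (a) $\maj$ is invariant under $\delta_{\tilde{w}}^{r}$ for every $\tilde{w}\in\sym^{*}(Q)\times\widetilde{\sym}(P)$; (b) the increment $\eta(\delta_{\tilde{w}}^{r}(\tau))-\eta(\tau)$ equals $\ell(\tilde{u})+\ell(\tilde{v})$ when $\tilde{w}=\tilde{v}\tilde{u}$; and (c) the resulting sum factors as a product of $t$-multinomial coefficients, one for each neutral block indexed by the multiplicities of $\alpha(P_i)$, together with one for the non-descent block $Q$. Claim (a) follows from Lemma~\ref{L:delta}, which localizes the potential change to the strip $\tau|_{r}^{r+1}$, combined with the observation that $\sym^{*}(Q)$ and $\widetilde{\sym}(P)$ preserve the descent set there by design. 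Claim (c) is the standard fact that the length generating function of a multiset rearrangement is a $t$-multinomial, and is immediate once (b) is in hand.

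The main obstacle is claim (b), for which I would carry out a careful case analysis on $\S$-quadruples that exploits the hypothesis $(v>u\ge z>w)\in\S\cap a_{2}$. For the neutral-block component $\tilde{u}$: each inversion within a neutral block produces quadruples $(z,z,u,v)$ whose upper entries are equal, and by item~(3) of Definition~\ref{Def:typeqinv} exactly one of $(z,z,u,v)$ and $(z,z,v,u)$ contributes to $\eta_{\S}$, yielding one new $\S$-quadruple per inversion of $\tilde{u}$ with respect to $\alpha(P_i)$. For the non-descent component $\tilde{v}$: inversions of $\tilde{v}$ relative to the reference $\pi^{\circ}$ produce precisely the orderings singled out by the stipulated condition on $\S\cap a_{2}$, so each inversion again contributes exactly one $\S$-quadruple, with the neutral counts (II)--(5) of Definition~\ref{Def:typeqinv} forcing the remaining quadruples to cancel in pairs. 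Assembling (a)--(c) yields $d_{\circ}(\sigma)$ as an explicit product of $t$-multinomials indexed by the blocks of every row. Identity (\ref{E:count12}) then follows from the involution $\sigma\mapsto\sigma'$ of Definition~\ref{Def:eta}, which intertwines $\eta$ with $\eta^{*}$ while preserving $\maj$ and canonical structure. Finally (\ref{E:count13})--(\ref{E:count14}) are obtained by the verbatim argument after swapping non-descent blocks with descent blocks, replacing $\pi^{\circ}$ by $\pi^{\dagger}$ and $\alpha$ by $\bar{\alpha}$, and substituting the dual hypothesis $(z>w>v>u)\in\S\cap a_{1}$ at step~(b).
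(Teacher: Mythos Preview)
Your proposal follows essentially the same architecture as the paper: canonicalization for (\ref{E:g11}) (the paper's Lemma~\ref{L:dec1}), reduction to rectangles (Lemma~\ref{L:rec}), row-wise induction for (\ref{E:count11}), and transfer via $\sigma\mapsto\sigma'$ and the dual substitutions $\pi^{\circ}\rightsquigarrow\pi^{\dagger}$, $\alpha\rightsquigarrow\bar{\alpha}$ for (\ref{E:count12})--(\ref{E:count14}).

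There is, however, a real gap in claims~(b) and~(c) for the non-descent block $Q$. Claim~(c) is \emph{not} the standard multiset-rearrangement identity here: the sum runs only over rearrangements $v$ of $Q$ obeying the columnwise constraint $v_{i}\ge a_{i}$ (this is what $\sym^{*}(Q)$ means), so Lemma~\ref{L:length_inv} does not apply directly and the answer is not a single $t$-multinomial in the multiplicities of $Q$. The paper resolves this in Proposition~\ref{prop:q2} by a nontrivial bijection $\pi^{\circ}(w)\mapsto (w(x_{1}),\ldots,w(x_{m}))$ that decomposes each constrained rearrangement into independent two-letter words indexed by the distinct entries of $Q$; it is this decomposition that yields the product of $t$-binomials appearing in $d_{\circ}(\sigma)$, and your sketch does not supply it. Relatedly, your justification of~(b) for $\tilde v$ is too local: the paper's Lemma~\ref{L:symQ1} does not match individual inversions of $\tilde v$ to individual $\S$-quadruples, but instead argues globally (after conjugating by $\Delta_{\pi^{\circ}}$) that the number of top-left/bottom-right pairs $(a,d)$ with $a>d$ is invariant under any non-descent-preserving rearrangement of the bottom row, whence the net change in $\eta_{\S}$ equals $\coinv(\pi^{\circ}(w))=L_{Q}(v)$. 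The hypothesis $(v>u\ge z>w)\in\S\cap a_{2}$ is used exactly to make this invariance go through, not to pair inversions with quadruples one at a time.
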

%\begin{remark}
%Note that there are exactly two $\S$-sets $\S_2$ and $\S_7$ fulfilling $(v>u\ge z>w)\in \S \cap a_2$ and $(z>w>v>u)\in \S \cap a_1$, thus two statistics $\eta$ of $\A^+$ have both (\ref{E:count11}) and (\ref{E:count13}).
%\end{remark}
The proof of Theorem \ref{P:can11} uses the proof techniques from \cite{CHO22}, which starts by reducing the proof to the case of rectangular partition $\lambda$ as follows. To avoid confusion, we shall only prove (\ref{E:count11})--(\ref{E:count12}) for canonical tableaux, as (\ref{E:count13})--(\ref{E:count14}) follow by the same line of argument.

%In what follows, we distinguish the statistics of $\A^+$ by letting $\eta_i=\eta$ and %$\S=\S_i$ in (\ref{E:etai}) for $1\le i\le 8$. We observe that  
%\begin{align}\label{E:equi13}
%\sum_{\tau\in\CMcal{G}(\sigma)}q^{\maj(\tau)}t^{\eta_i(\tau)}
%=\sum_{\tau\in\CMcal{G}(\sigma)}q^{\maj(\tau)}t^{\eta_j(\tau)},
%\end{align}
%for any $\sigma\in \CMcal{C}_{\circ}(\lambda)\,\dot\cup\,\CMcal{C}_{\dagger}(\lambda)$ and $i\ne j$. That is, 

\begin{lemma}\label{L:rec}
	Suppose that (\ref{E:count11}) is true for rectangular diagram of $\lambda$, then both  (\ref{E:count11}) and (\ref{E:count12}) hold for any diagram of $\lambda$.
\end{lemma}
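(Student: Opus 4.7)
The plan is to exploit the rectangular decomposition $\sigma = \sigma_n \sqcup \cdots \sqcup \sigma_1$ together with the product structure $\CMcal{G}(\sigma) = \CMcal{G}(\sigma_n) \sqcup \cdots \sqcup \CMcal{G}(\sigma_1)$ from (\ref{E:nonrec}), under which every $\tau \in \CMcal{G}(\sigma)$ splits uniquely as $\tau = \tau_n \sqcup \cdots \sqcup \tau_1$ with $\tau_j \in \CMcal{G}(\sigma_j)$. I will then factorize the generating function in (\ref{E:count11}) into a product over rectangular blocks, to which the rectangular hypothesis can be applied.

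First I would establish compatible additive splittings of both statistics. Because each column of $\dg(\lambda)$ lies in a unique $\sigma_j$ and descents together with their legs are column-local, $\maj(\tau) = \sum_{j=1}^{n} \maj(\tau_j)$ holds. For $\eta$, any $\S$-quadruple requires two columns of equal height and hence sits inside a single rectangular block, giving $\eta_{\S}(\tau) = \sum_j \eta_{\S}(\tau_j)$; within each rectangle all columns share the same height, which forces $\eta(\tau_j) = \eta_{\S}(\tau_j)$. Writing $T(\tau) := \eta(\tau) - \sum_j \eta(\tau_j)$ for the residual contribution, one checks that $T(\tau)$ equals the number of $\S$-triples $(a,b,c)$ whose column of $(a,b)$ lies in some $\tau_j$ and whose $c$ lies in a strictly shorter block $\tau_{j'}$ situated to the right.

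The main obstacle is proving the invariance $T(\tau) = T(\sigma)$ for all $\tau \in \CMcal{G}(\sigma)$. I would reduce this to a local compatibility statement for each operator $\delta_{\tilde w}^r$ in (\ref{E:delta2}): the full-height flips $\Delta_{\pi^{\star}}$ permute entire columns of $\sigma_j$ and therefore trivially preserve every column-pair multiset, while the partial factor $\Delta_{\tilde w}^r$ only rearranges entries inside neutral or non-descent blocks; the equality (respectively, monotonicity) of upper-neighbour values defining these blocks should force the boundary-row transposition to leave the $\S$-triple count against any shorter rectangle untouched. This parallels the argument at the end of the proof of Theorem \ref{T:4}, where the bijection $\gamma$ was shown to preserve queue inversion triples between columns of unequal heights precisely because its swaps are confined to homogeneous column blocks.

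Granting this invariance, the generating function factorizes as
\begin{align*}
\sum_{\tau\in\CMcal{G}(\sigma)} q^{\maj(\tau)} t^{\eta(\tau)} = t^{T(\sigma)} \prod_{j=1}^{n} \sum_{\tau_j \in \CMcal{G}(\sigma_j)} q^{\maj(\tau_j)} t^{\eta(\tau_j)},
\end{align*}
and the rectangular case of (\ref{E:count11}) evaluates each factor as $q^{\maj(\sigma_j)} t^{\eta(\sigma_j)} d_{\circ}(\sigma_j)$. Setting $d_{\circ}(\sigma) := \prod_j d_{\circ}(\sigma_j)$ and combining with $\eta(\sigma) = \sum_j \eta(\sigma_j) + T(\sigma)$ yields (\ref{E:count11}) for arbitrary $\sigma$. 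For (\ref{E:count12}), I would observe that the prime operation of Definition \ref{Def:eta} acts rectangle-by-rectangle, so $\sigma' = \sigma_n' \sqcup \cdots \sqcup \sigma_1'$ and the family $\{\tau' : \tau \in \CMcal{G}(\sigma)\}$ inherits the analogous factorization with $(\tau')_j = (\tau_j)'$. The same reduction then applies, with $T$ now measuring cross-rectangle inversion triples rather than queue inversion triples, and the rectangular case of (\ref{E:count12}) applied to each $\sigma_j$ completes the deduction.
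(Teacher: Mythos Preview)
Your proposal is correct and follows essentially the same route as the paper: use the product structure $\CMcal{G}(\sigma)=\CMcal{G}(\sigma_n)\sqcup\cdots\sqcup\CMcal{G}(\sigma_1)$, split $\maj$ and $\eta$ into per-rectangle pieces plus a cross-rectangle $\S$-triple count (the paper writes this as $x(\sigma)$), argue that this cross term is constant on $\CMcal{G}(\sigma)$ because the operators generating $\CMcal{G}(\sigma)$ only swap entries $u,v$ in configurations where $\Q(z,u,v)=\Q(w,u,v)$, then factorize and apply the rectangular hypothesis. One small point to tighten: in your last paragraph you invoke ``the rectangular case of (\ref{E:count12})'', but the lemma's hypothesis only supplies the rectangular case of (\ref{E:count11}); you should first observe, via Remark~\ref{Rem:3}, that $\eta^*(\tau')=\eta(\tau)$ and $\maj(\tau')=\maj(\tau)$ for rectangular $\tau$, which immediately converts rectangular (\ref{E:count11}) into rectangular (\ref{E:count12}) with $d_\circ(\sigma')=d_\circ(\sigma)$, exactly as the paper does.
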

\begin{proof}
	Suppose that $\sigma=\sigma_n \sqcup \cdots \sqcup \sigma_1$ for any $\sigma\in\T(\lambda)$, it suffices to prove the statement for two non-empty rectangles, that is $\sigma=\sigma_i\sqcup \sigma_j$ for some $j<i$, in that the argument for arbitrary $p$ follows analogously. For  $\tau\in\CMcal{G}(\sigma)$, say, $\tau=\tau_i\sqcup\tau_j$ for some $\tau_i\in \CMcal{G}(\sigma_i)$ and $\tau_j\in \CMcal{G}(\sigma_j)$. Let $x(\sigma)$ be the number of queue inversion triples induced by two columns respectively of $\sigma_i$ and $\sigma_j$, that is,
	\begin{align*}
	x(\sigma)=\sum_{(a,b,c)\in\sigma_i\sqcup\sigma_j}\Q_{\S}(a,b,c).
	\end{align*}
	By the construction of $\CMcal{G}(\sigma)$ (see Definition \ref{D:family}), all operators $\delta_i^r$ only transpose the entries $u,v$ of quadruples $(z,w,u,v)$ when $u,v$ are elements of a neutral or non-descent block, which satisfies $z=w>v>u$ or $u\ge z=w>v$ or $v>u\ge z\ge w$. Since $\Q(z,u,v)=\Q(w,u,v)$, Lemma \ref{L:2} guarantees that the number $x(\sigma)$ of queue inversion triples located at two columns of different heights is invariant in $\CMcal{G}(\sigma)$. That is,
	\begin{align*}
	\eta(\tau)=\eta(\tau_i)+\eta(\tau_j)+x(\sigma).
	\end{align*}
	Since (\ref{E:count11}) holds for rectangular fillings $\sigma_i$ and $\sigma_j$, we are led to
	\begin{align*}
	\sum_{\tau\in\CMcal{G}(\sigma)}t^{\eta(\tau)}&=t^{x(\sigma)}\sum_{\tau_i\in\CMcal{G}(\sigma_i)}t^{\eta(\tau_i)}\sum_{\tau_j\in\CMcal{G}(\sigma_j)}t^{\eta(\tau_j)}\\
	&=t^{x(\sigma)}t^{\eta(\sigma_i)}d_{\diamond}(\sigma_i)t^{\eta(\sigma_j)}d_{\diamond}(\sigma_j)\\
	&=t^{\eta(\sigma)}d_{\diamond}(\sigma).
	\end{align*}
	Thus (\ref{E:count11}) follows by  nothing that $\maj(\tau)=\maj(\tau_i)+\maj(\tau_j)=\maj(\sigma_i)+\maj(\sigma_j)=\maj(\sigma)$. Similarly one can reduce the proof of (\ref{E:count12}) to the rectangular case. Remark \ref{Rem:3} tells that $\eta^*(\tau')=\eta(\tau)$ for any $\tau\in \T(\lambda)$ provided that the diagram of $\lambda$ is a rectangle, which in light of (\ref{E:count11}) implies (\ref{E:count12}) where $d_{\diamond}(\sigma')=d_{\diamond}(\sigma)$.
\end{proof}

Since $\CMcal{G}(\sigma)=\CMcal{G}(\sigma_n) \sqcup \cdots \sqcup \CMcal{G}(\sigma_1)$, the proof of (\ref{E:g11}) is also reduced to the rectangular case, which is to be presented in the lemma below.
\begin{lemma}\label{L:dec1}
	For $\lambda=(k^n)$ and for any $\tau\in \T(\lambda)$, there is a unique way to rearrange the entries of each row of $\tau$ to produce a canonical or a dual canonical filling $\sigma$ such that $\tau\in\CMcal{G}(\sigma)$.
\end{lemma}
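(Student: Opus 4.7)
My plan is as follows. First, by the decomposition (\ref{E:nonrec}) and the fact that Definition \ref{Def:3} defines (dual) canonical tableaux componentwise on the maximal rectangular strips of $\dg(\lambda)$, the claim for general $\lambda$ follows once it is established for each rectangular piece; the non-rectangular case is then obtained by gluing. Since every operator $\delta_i^r$ preserves the multiset of entries in each row (each $t_i^x$ only swaps two entries within the same row), the candidate $\sigma$ must be some rearrangement of each row of $\tau$, so the statement reduces to exhibiting a unique row-by-row sorting recipe.

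For existence, I will process a rectangular $\tau \in \T((k^n))$ one row at a time from the top down, leaving row $n$ fixed and, at step $r$ (for $r = n-1, \ldots, 1$), rearranging row $r$ of $\tau$ into the unique arrangement satisfying Definition \ref{Def:3}(I) and (II) relative to the already canonical row $r+1$. The rearrangement is determined in two stages: within each neutral block of row $r+1$ (maximal run of columns sharing an upper entry), arrange the row-$r$ entries to meet condition (I), which by Lemma \ref{L:equ2Def} is equivalent to $\pi^\circ$ applied to the row producing a weakly decreasing sequence; then globally across the non-descent block, arrange the row-$r$ entries so that condition (II) holds. These two sorts act on disjoint degrees of freedom once we pass through the conjugations $\Delta_{\pi^\bullet}$ and $\Delta_{\pi^\circ}$ used in Definition \ref{Def:delta}, yielding a well-defined canonical $\sigma$ with $\tau \in \CMcal{G}(\sigma)$, since the rearrangement carrying $\sigma|_r^r$ to $\tau|_r^r$ is precisely a product $\tilde v \tilde u \in \sym^*(Q) \times \widetilde{\sym}(P)$ appearing in Definition \ref{D:family}.

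For uniqueness, suppose $\sigma, \sigma' \in \CMcal{C}_\circ(\lambda)$ both contain $\tau$ in their $\CMcal{G}$-orbits. They share the row-multisets of $\tau$ by the orbit invariance noted above. Inducting from row $n$ (which is trivially the same in any canonical extension) downward, the existence step's uniqueness of the canonical row-$r$ arrangement given a fixed row $r+1$ forces $\sigma|_r^r = \sigma'|_r^r$ at each level, hence $\sigma = \sigma'$. The dual canonical case is entirely analogous, with condition (II) replaced by (III), the permutation $\pi^\circ$ replaced by $\pi^\dagger$, the set $\alpha(P_i)$ replaced by $\bar\alpha(P_i)$, and the non-descent block $Q$ replaced by the descent block.

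The main obstacle is the compatibility of the two sorting stages. Neutral blocks (grouped by equal upper entries) and the non-descent block (grouped by column descent type) can overlap intricately, so conditions (I) and (II) must be shown to be simultaneously realizable by exactly one row arrangement. I expect this to reduce to the observation that, within a single neutral block with common upper entry $a$, the descent and non-descent columns are separated by the threshold $a$, and once (I) fixes the linear order inside the block, condition (II) only constrains the relative order of non-descent entries across distinct neutral blocks; the braid relation of Lemma \ref{L:braid} ensures that the underlying operators act consistently regardless of the order in which rows are processed and of the reduced word chosen for each permutation, making the identification of the canonical form unambiguous.
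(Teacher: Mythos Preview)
Your overall strategy---process the tableau top-down, at each level sort row $r$ into the unique arrangement that is canonical relative to the already-processed row $r+1$, then observe that reversing these sorts exhibits $\tau$ as an element of $\CMcal{G}(\sigma)$---is exactly the paper's approach. However, several concrete details are wrong. First, row $n$ is \emph{not} left fixed: condition~(I) with upper neighbor $a=0$ forces the top row of any canonical $\sigma$ to be weakly decreasing, and indeed $\CMcal{G}(\sigma;n)$ allows arbitrary rearrangement of the top row (the whole row is the non-descent block $Q$), so the first step of the sorting must be to put row~$n$ in weakly decreasing order. Your uniqueness argument, which starts from ``row $n$ is trivially the same,'' therefore does not get off the ground as written. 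Second, you have swapped the roles of the two stages: Lemma~\ref{L:equ2Def} characterizes condition~(II), not condition~(I), and the permutation $\pi^{\circ}$ governs the \emph{non-descent} block $Q$, whereas $\pi^{\bullet}$ governs the neutral blocks $P$. In the paper, one first conjugates by $\Delta_{\pi^{\circ}}$ and sorts $Q$ into a weakly decreasing sequence (achieving~(II)), then conjugates by $\Delta_{\pi^{\bullet}}$ and, within each neutral block with common upper entry $b$, places the entries $<b$ first in weakly decreasing order followed by the entries $\ge b$ in their original left-to-right order (achieving~(I)).

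There is also a verification you have omitted that the paper carries out explicitly: when you sort the non-descent block $Q$ of $\tau$ into weakly decreasing order, you must check that the resulting columns remain non-descent (so that the inverse permutation genuinely lies in $\mathsf{Sym}^*(Q)$). This is not automatic; the paper argues it by noting that adjacent swaps in $\tilde{v}$ only exchange $c<d$ with $d>c\ge e\ge f$ above, so $(e,d)$ and $(f,c)$ stay non-descent. Likewise one must check that the neutral-block sort preserves the relative order of non-descent entries, so that $\tilde{u}\in\widetilde{\mathsf{Sym}}(\alpha(P))$. Without these checks, ``reversing the process'' does not show $\tau\in\CMcal{G}(\sigma)$.
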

\begin{proof}
	We only prove the case that $\sigma$ is a canonical tableau, as the other case follows similarly.
	We start from the top row of $\tau$ and sort all entries of the top row in weakly decreasing order. Suppose that the top $n-r$ rows of $\tau$ form a canonical filling, we will construct a unique canonical tableau $\sigma$ of the top $n-r+1$ rows as below.
	
	Let $p=(p_1,\ldots,p_k)=\tau\vert_r^{r}$ with upper neighbor $a=(a_1,\ldots,a_k)$. The sequences $a$ and $p$ determines $\pi^{\diamond}$.
	For the unique non-descent block $Q$ of the $r$th row of $\Delta_{\pi^{\diamond}}(\tau\vert_1^{r+1})$, we sort the entries of $Q$ into a weakly decreasing sequence. Choose $\tilde{v}$ to be the shortest permutation of $[k]$ to achieve this purpose. The block $Q$ after the rearrangement by $\tilde{v}$ is still a non-descent block because the neighbor entries $c,d$ of $Q$ are swapped by $\tilde{v}$ only if $c<d$, that is, $d>c\ge e\ge f$ for the quadruple $(e,f,c,d)$ of two neighbor non-descent columns. Here $e\ge f$ because the upper neighbor of $Q$ weakly decreases.
	$$\begin{ytableau}
		e  & f   \\
		c  & d 
	\end{ytableau}
	\, \rightarrow 
	\begin{ytableau}
		e  & f   \\
		d  & c 
	\end{ytableau}
	\, .$$
	Consequently Lemma \ref{L:equ2Def} assures the non-descent block in row $r$ of $\delta_{\tilde{v}}^r(\tau)$ fulfills condition (II) of canonical tableaux in Definition \ref{Def:3}. 
	
	For every neutral block $P$ in row $r$ of $\Delta_{\pi^{\bullet}}(\tau\vert_1^{r+1})$, let  $(b,\cdots,b)$ be its upper neighbor such that not all columns formed by $P$ and its upper neighbor are non-descents. 
	Then we rearrange the elements of $P$ into a new sequence $w$ with respect to $b$ as follows: move all entries that are less than $b$ to the front in weakly decreasing order, followed by entries that are larger than or equal to $b$ in the left-to-right order of $P$. %while maintaining its relative order. 
	
	This sorting process of neutral blocks is implemented as follows: Take $\tilde{u}$ as the shortest permutation of $[k]$ that sorts all neutral blocks $P$ as above.
	Since the relative order of entries in the non-descent columns formed by each neutral block and its upper neighbor is preserved by $\tilde{u}$, $\tilde{u}$ is also the shortest permutation that transforms $\alpha(P)$ to $\alpha(w)$. Therefore, all neutral blocks in row $r$ of $\delta_{\tilde{u}}^r(\tau)$ satisfy condition (I) of canonical tableaux in Definition \ref{Def:3}.

	Denote the unique canonical filling after the entire sorting process on $\tau$ by $\sigma$. Reversing the process produces $\tau\in \CMcal{G}(\sigma)$ by Definition \ref{D:family}. This finishes the proof, thus yielding (\ref{E:g11}).
\end{proof}
\begin{example}
	For $\lambda=(5,5,5)$, let $\tau\in\T(\lambda)$, the sorting process to derive a unique canonical filling $\sigma$ is presented as follows. 
	
	Start from the top row $p=(5,1,3,2,1)$ with upper neighbor $a=(0,0,0,0,0)$. The block $p$ is the unique non-descent block of top row and $\pi^{\diamond}=id$, thus $\Delta_{\pi^{\diamond}}=id$. Since the shortest permutation that rearranges $p$ into a weakly decreasing sequence is $\tilde{v}=s_3s_2$, we have $\delta_{\tilde{v}}=\Delta_{\tilde{v}}=\delta_3\circ \delta_2$, by which $\delta_{\tilde{v}}(\tau)$ is produced as the second one below. Proceed by sorting other rows and eventually arrive at the canonical tableau $\sigma$.
	
	\begin{align*}
	\tau&=\begin{ytableau}[] *(green)5 & *(green)1 & *(green)3 &*(green)2 &*(green)1  \\ 3 & 3 & 2 &4 &7 \\ 3 & 8 &1& 2&2
	\end{ytableau}\xrightarrow{\delta_3 \circ\delta_2}
	\begin{ytableau}[] 5 & 3 & 2 &1 &1  \\ *(green)3 & *(green)2 & *(green)4 &*(green)3 &*(green)7 \\ 3 & 1 &2& 8&2
	\end{ytableau}
	\xrightarrow{\delta_3^2\circ \delta_4^2} \begin{ytableau}[]  5 & 3 & 2 &1 &1  \\ 3 & 2 & 7 &4 &3 \\ *(green)3 & *(green)1 &*(green)2& *(green)2&*(green)8
	\end{ytableau} \\[5pt]
	&\xrightarrow{\delta_4^1 \circ\delta_3^1 \circ\delta_2^1 \circ\delta_1^1 \circ \delta_2^1 \circ\delta_3^1 \circ\delta_4^1}\begin{ytableau}[]  5 & 3 & 2 &1 &1  \\ 3 & 2 & 7 &4 &3 \\ 8 & 1 &2& 2&3
	\end{ytableau}=\sigma
	\end{align*}	
\end{example}
For canonical tableaux, what is left to establish is (\ref{E:count11}) for rectangular diagrams. 

Let us define the coefficient $d_{\diamond}(\sigma)$ for a canonical tableau $\sigma$ by associating it with a unique set of sequences $\{\nu_{i,j}\}_{i\le j}$ and $s=\{s_k^h\}_{1\leq h\leq k\le N}$ subject to conditions (\ref{E:defs}), \eqref{eq7} and \eqref{eq8}. For any partition $\lambda$, let $\sigma=\sigma_n \sqcup \cdots \sqcup \sigma_1\in \CMcal{C}_{\diamond}(\lambda)$ where $n=\ell(\lambda)$ and $\sigma_j$ is a rectangular tableau of height $j$. %and width $\lambda_j-\lambda_{j+1}$ for all $1\le j<n$.

Let $N$ be the largest entry of $\sigma$. Then the sequence $\nu_{i,j}=(\nu_{i,j}^{1}\le \cdots\le \nu_{i,j}^{N})$ of nonnegative integers is constructed from $\sigma_j$ as follows: Define 
\begin{align*}
\nu_{i,j}^{1}&=\# \, N's \textrm{ in row } i \textrm{ of } \sigma_j,\\
\nu_{i,j}^{k}-\nu_{i,j}^{k-1}&=\# \, (N+1-k)'s \textrm{ in row } i \textrm{ of } \sigma_j,
\end{align*}
for any $1<k\le N$. Since the row $i$ of $\sigma_j$ contains $\lambda_j-\lambda_{j+1}$ boxes and $\sigma_j$ has exactly $j$ rows, the conditions \eqref{eq7}--\eqref{eq8} are satisfied. 
%Let $\mu=(\mu_1,\ldots,\mu_{N})$ be the partition defined by \eqref{eq9}, then
%$\mu_k$ counts the occurrences of $(N-k+1)$'s in $\sigma$, leading to
%\begin{align*}
%x^{\sigma}=\prod_{k=1}^{N}x_{N-k+1}^{\mu_k}.
%\end{align*}
The sequence $s=\{s_k^h\}_{1\leq h\leq k\le N}$ is determined from $\sigma_j\vert_{i}^{i+1}$ as follows: 
\begin{align*}
s_{k}^{k}&=\#\,\textrm{ columns } (a,b) \textrm{ in } \sigma_j\vert_i^{i+1} 
\textrm{ such that } N+1-k=a\le b,\\
s_{h}^k-s_{h-1}^k&=\#\,\textrm{ columns } (a,b) \textrm{ in } \sigma_j\vert_i^{i+1} 
\textrm{ such that } N+1-k=a>b=N-h+1
\end{align*}
for $k<h$. This implies that $s_{N}^k=\nu_{i+1,j}^{k}-\nu_{i+1,j}^{k-1}$ counts the occurrences of $N+1-k$ in $\sigma_j\vert_{i+1}^{i+1}$, by which $s=\{s_k^h \}_{1\leq h\leq k\le N}$ satisfies (\ref{E:defs}) for $\nu=\nu_{i+1,j}$. 

For any filling $\sigma\in \T(\lambda)$ with $n=\ell(\lambda)$, define 
\begin{align}\label{E:coeffd}
d_{\diamond}(\sigma)&=d_{\diamond}(\sigma_n)\cdots d_{\diamond}(\sigma_1),\,\,\mbox{ where }\,\\
\label{E:coeffd2}d_{\diamond}(\sigma_j)&=\prod_{i\le j}\prod_{1\le k<N}{\nu_{i,j}^{k+1}-s_{k+1}^{1,k}\brack \nu_{i,j}^{k}-s_{k}^{1,k}}_t\prod_{1\le h\le k}{s_{k+1}^{h}\brack s_{k}^{h}}_t
\end{align}
and $d_{\diamond}(\sigma_i)=1$ if $i$ is not a part of $\lambda'$. 

We now introduce some total orderings related to inversions and prove a sequence of Lemmas towards the proof of (\ref{E:count11}).
\subsection{Some auxiliary lemmas}
A generalization of inversions (see Definition \ref{Def:generlization_inv}) with respect to any total ordering of positive integers was introduced by Loehr and Niese \cite{LN12} and the follow-up lemma relates the distribution of generalized inversion numbers of permutations and $t$-multinomials.

\begin{definition}[$\prec$-inversions \cite{LN12}]\label{Def:generlization_inv}
	Given any total ordering $\prec$ on the set $[N]$, the number of $\prec$-inversions of a sequence $w=(w_1,\ldots,w_k)$ is defined as the number of pairs $(i,j)$ such that $1\le i<j\le k$ and $w_i\succ w_j$. Let $\prec$-$\inv(w)$ be the number of $\prec$-inversions of $w$. %Dually, $\prec$-$\coinv(w)=\prec$-$\inv(w^r)$ where $w^r=(w_k,\ldots,w_1)$  is the number of $\prec$-coinversions of $w$.
	
	In particular, let $\overset{u}{<}$ be a total ordering given by
	\begin{align*}
	(u+1)\overset{u}{<}(u+2)\overset{u}{<}\cdots\overset{u}{<}(N-1)\overset{u}{<}N\overset{u}{<}1
	\overset{u}{<}\cdots\overset{u}{<}(u-1)\overset{u}{<}u
	\end{align*} 
	for all $u\in [N]$ where $u$ is the largest entry by $\overset{u}{<}$. Particularly the usual total ordering $<$ is the special case that $u=N$. 
	
	Dually, let $\overset{u}{\lessdot}$ be the total ordering derived by reversing the direction of $\overset{u}{<}$, i.e.,
	\begin{align*}
	u\overset{u}{\lessdot}(u-1)\overset{u}{\lessdot}\cdots\overset{u}{\lessdot}1\overset{u}{\lessdot}N\overset{u}{\lessdot}(N-1)
	\overset{u}{\lessdot}\cdots\overset{u}{\lessdot}(u+2)\overset{u}{\lessdot}(u+1)
	\end{align*} 
	and now $u$ becomes the smallest entry. Then define $\overset{u}{<}$-$\coinv(w)=\overset{u}{\lessdot}$-$\mathsf{inv}(w)$, which is equal to $\coinv(w)$ when $u=N$.

\end{definition}
\begin{lemma}[\cite{FH,LN12}]\label{L:length_inv}
	Suppose that the sequence $p=(p_1,\ldots,p_n)$ is weakly increasing, having $r$ distinct entries such that the integer $i$ appears exactly $m_i$ times for $1\le i\le r$. Then $n=m_1+\cdots+m_r$ and
	\begin{align*}
	\sum_{\tilde{w}\in \widetilde{\sym}(p)}t^{\ell(\tilde{w})}=
	\sum_{w\in \sym(p)}t^{\prec\text{-}\inv(w)}%=\sum_{w\in \sym(p)}t^{\prec\text{-}\mathsf{coinv}(w)}
	={n\brack m_1,\ldots,m_r}_t.
	\end{align*}
	%Equivalently, let $P^r=(p_n,\ldots,p_1)$, then
	%\begin{align*}
	%\sum_{\tilde{w}\in \widetilde{\sym}(P^r)}t^{\ell(\tilde{w})}=\sum_{w\in %\sym(P^r)}t^{\quinv(w)}={n\brack m_1,\ldots,m_r}_t.
	%\end{align*}
\end{lemma}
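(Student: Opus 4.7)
The plan is to establish the two equalities separately, both reducing to MacMahon's classical $t$-multinomial identity $\sum_{w\in\sym(p)} t^{\inv(w)} = {n \brack m_1,\ldots,m_r}_t$.

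First I would handle the equality between the leftmost and middle terms under the standard ordering. By Definition \ref{Def:reduced}, the map $\tilde{w} \mapsto \tilde{w}(p)$ is a bijection from $\widetilde{\sym}(p)$ to $\sym(p)$, and since $p$ is weakly increasing the identity $\ell(\tilde{w}) = L_p(w) = \inv(w)$ holds (precisely the remark following the definition of $L_p$). Thus
$$\sum_{\tilde{w}\in\widetilde{\sym}(p)} t^{\ell(\tilde{w})} = \sum_{w\in\sym(p)} t^{\inv(w)},$$
which is the standard $t$-multinomial by the classical argument, e.g., by induction on $r$: conditioning on the positions of the smallest value factors out a Gaussian binomial ${n \brack m_1}_t$, after which the inductive hypothesis applies to the reduced multiset.

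To pass from $\inv$ to $\prec$-$\inv$ for a general total ordering $\prec$ on $[N]$, I would introduce the relabeling bijection $\phi:[N]\to[N]$ defined by $\phi(a) < \phi(b)$ iff $a \prec b$. This induces a bijection $\sym(p)\to\sym(\phi(p))$, $w\mapsto(\phi(w_1),\ldots,\phi(w_n))$, under which $\prec$-$\inv(w)=\inv(\phi(w))$ termwise. Since $\phi(p)$ is a multiset of the same cardinality whose multiset of multiplicities is merely a permutation of $\{m_1,\ldots,m_r\}$, and since the $t$-multinomial coefficient is symmetric in its lower arguments, applying the identity already established to $\phi(p)$ yields
$$\sum_{w\in\sym(p)} t^{\prec\text{-}\inv(w)} = \sum_{w'\in\sym(\phi(p))} t^{\inv(w')} = {n \brack m_1,\ldots,m_r}_t,$$
completing the chain of equalities.

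No step presents a serious obstacle, since the result is essentially a classical identity (attributed here to \cite{FH,LN12}); the main care required is to keep the three viewpoints (reduced-word length in $S_n$, standard inversion count on multiset permutations, and $\prec$-inversion count) aligned in the notation, and to verify that the relabeling bijection $\phi$ interacts with $\prec$-$\inv$ in the claimed term-by-term fashion.
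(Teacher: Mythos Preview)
Your proposal is correct. The paper does not supply its own proof of this lemma; it simply states the result and attributes it to \cite{FH,LN12}. Your argument---identifying $\ell(\tilde w)$ with $\inv(w)$ via the remark in Definition~\ref{Def:reduced}, invoking MacMahon's classical $t$-multinomial identity, and then transferring to an arbitrary total order $\prec$ by an alphabet relabeling---is the standard route and is sound throughout.
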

We next show that the length $L_p(w)$ equals the extended $\overset{u}{\lessdot}$-inversion numbers, as a natural generalization of Remark \ref{Rem:5} for $a=N+1$. The proof is similar to the one for $\ell(\tilde{w})=\inv(w)$ in \cite{BB:05} and the one for sorted tableaux in \cite[Lemma 3.20]{CHO22}.
\begin{lemma}
	Let $p=(p_1,\ldots,p_k)$ be a sequence that starts with entries that are less than $a$ in weakly decreasing order, followed by entries that larger than or equal to $a$ in weakly decreasing order. Furthermore, let $w=(w_1,\ldots,w_k)\in \mathsf{Sym}(p)$. Then
	\begin{align}\label{E:lp}
	L_p(w)=\overset{a-1}{\lessdot}\textit{-}\mathsf{inv}(w).
	\end{align}
\end{lemma}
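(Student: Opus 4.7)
The plan is to reduce the identity to the case of a usual-weakly-increasing initial sequence, already recorded in Remark~\ref{Rem:5}. First I would unpack the definition of $\overset{a-1}{\lessdot}$ by listing its elements from smallest to largest:
\[
a-1,\;a-2,\;\ldots,\;1,\;N,\;N-1,\;\ldots,\;a+1,\;a.
\]
Three features are apparent: on the subset $\{1,\ldots,a-1\}$ the ordering $\overset{a-1}{\lessdot}$ is the reverse of the usual $<$; on the subset $\{a,\ldots,N\}$ the ordering $\overset{a-1}{\lessdot}$ is again the reverse of $<$; and every element of $\{1,\ldots,a-1\}$ is $\overset{a-1}{\lessdot}$-smaller than every element of $\{a,\ldots,N\}$.

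Next I would use these three features to verify that the hypothesized sequence $p$ is $\overset{a-1}{\lessdot}$-weakly increasing. By hypothesis, the initial segment of $p$ (entries $<a$) is usual-weakly-decreasing, hence $\overset{a-1}{\lessdot}$-weakly-increasing by the first feature; the final segment (entries $\geq a$) is likewise $\overset{a-1}{\lessdot}$-weakly-increasing by the second feature; and the transition between the two segments is $\overset{a-1}{\lessdot}$-weakly-increasing by the third feature. Globally, $p$ is therefore $\overset{a-1}{\lessdot}$-weakly increasing.

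Finally I would invoke an order-agnostic version of Remark~\ref{Rem:5}: for any total order $\prec$ on $\mathbb{P}$, any $\prec$-weakly-increasing sequence $q$, and any $w\in\mathsf{Sym}(q)$, one has $L_q(w)=\prec\text{-}\mathsf{inv}(w)$. The justification is a relabeling: let $\phi$ be the unique order isomorphism from the support of $q$ (equipped with $\prec$) onto an initial segment of $(\mathbb{P},<)$. Because the condition $\tilde w(q)=w$ is preserved verbatim after applying $\phi$ entrywise, we have $L_q(w)=L_{\phi(q)}(\phi(w))$; since $\phi(q)$ is weakly increasing in the usual order, Remark~\ref{Rem:5} gives $L_{\phi(q)}(\phi(w))=\mathsf{inv}(\phi(w))$; and $\mathsf{inv}(\phi(w))$ counts exactly the pairs $(i,j)$ with $i<j$ and $w_i\succ w_j$, i.e.\ $\prec\text{-}\mathsf{inv}(w)$. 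Specializing to $q=p$ and $\prec\,=\,\overset{a-1}{\lessdot}$ yields \eqref{E:lp}.

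I do not foresee any serious obstacle: the only genuine content is the verification that $p$ is weakly increasing under the twisted order, after which the identity becomes the classical $L=\mathsf{inv}$ statement via a purely formal relabeling.
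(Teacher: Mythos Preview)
Your argument is correct and takes a genuinely different route from the paper's. The paper proves the identity from scratch by a two-sided estimate: the inequality $L_p(w)\ge x(w):=\overset{a-1}{\lessdot}\text{-}\mathsf{inv}(w)$ is immediate since a simple transposition changes $x$ by at most one, and the reverse inequality is obtained by induction on $x(w)$, using that whenever $x(w)>0$ some adjacent pair $(w_i,w_{i+1})$ is $\overset{a-1}{\lessdot}$-out-of-order, so $x(s_i(w))=x(w)-1$ and one appeals to the hypothesis for $s_i(w)$.

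Your approach is more conceptual: you observe that the particular sequence $p$ is $\overset{a-1}{\lessdot}$-weakly increasing, and that the classical identity $L_q(w)=\mathsf{inv}(w)$ for usual-weakly-increasing $q$ (stated at the end of Definition~\ref{Def:reduced}) transports through any order-isomorphic relabeling, since $L_q(w)$ depends only on the coincidence pattern of entries and $\mathsf{inv}$ becomes $\prec\text{-}\mathsf{inv}$ under the relabeling. This buys you a shorter proof that also makes transparent why the specific shape of $p$ matters (it is precisely the $\overset{a-1}{\lessdot}$-sorted arrangement). The paper's hands-on induction, by contrast, is self-contained and does not invoke the $L=\mathsf{inv}$ fact as a black box. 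One small citation remark: the fact you actually need, $L_q(w)=\mathsf{inv}(w)$ for weakly increasing $q$, is recorded in Definition~\ref{Def:reduced} rather than in Remark~\ref{Rem:5} (which treats the weakly decreasing case).
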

\begin{proof}
	For simplicity, we write $x(w)=\overset{a-1}{\lessdot}\textit{-}\inv(w)$. Since swapping two neighbor entries could increase the number of $\lessdot$-inversions by at most one, we obtain $L_p(w)\ge x(w)$.
	
	What remains to prove is that $L_p(w)\le x(w)$ and we argue it by induction on $x(w)$. First observe that $x(w)=0$ if and only if $w=p$, that is, $\tilde{w}=id$. This implies that the base case for $L_p(w)\le x(w)$ is true.

	Suppose that $L_p(w)\le x(w)$ is true for $x(w)=\ell$, we will prove it for $x(w)=\ell+1$. We claim that $x(s_i(w))=x(w)-1=\ell$ for some $1\le i<k$.
	If it were not the case, then either $x(s_i(w))=x(w)$ or $x(s_i(w))=x(w)+1$ for all 
	$i$. This requires that $a>w_i\ge w_{i+1}$ or $a\le w_{i+1}\le w_i$ or $w_{i+1}\ge a>w_i$ for all $i$, which leads to the only possibility $w=p$. However, $w=p$ is against the assumption that $x(w)=\ell+1\ne 0$. This proves the claim. 
	
    By the induction hypothesis for $x(s_i(w))=\ell$, we come to $L_p(w)\le L_p(s_i(w))+1\le \ell+1$ and this finishes the inductive proof.	
\end{proof}

%%%%%%%%%%
%%%%%%%%%%%%%%%%%%%%%%%%%%%%%%%%%%%%%%%%%%
%%%%%%%%%%

Returning to Lemma \ref{L:length_inv}, we shall use it calculate the difference $\eta(\delta_{\tilde{w}}^r(\sigma))-\eta(\sigma)$ for each block $p$ of a canonical tableau $\sigma$. 

%establish the same distributions between $L_p(\tilde{w})$ and $\ell(\tilde{w})$ for each descent block $p$ of a canonical tableau.

\begin{lemma}\label{L:two_row}
For $\lambda=(k,k)$, let $\sigma\in \T(\lambda)$ with a neutral block $p=(p_1,\ldots,p_k)$ that starts with entries in descent columns in weakly decreasing order, followed by the entries in non-descent columns.
\begin{figure}[ht]
	\centering
	\includegraphics[scale=0.75]{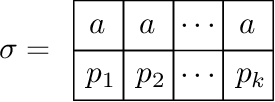}
\end{figure}

Let $\tau$ be the tableau obtained from $\sigma$ by permuting the entries of $p$ into  $w=(w_1,\ldots,w_k)$ while preserving the relative order of entries that are larger than or equal to $a$. Then
\begin{align}\label{E:symquinv}
L_p(w)=\quinv(\tau)-\quinv(\sigma)=\eta(\tau)-\eta(\sigma),
\end{align}
where $\tau=\delta_{\tilde{w}}(\sigma)$ and $\tilde{w}\in\widetilde{\sym}(\alpha(p))$ is the shortest permutation such that $\tilde{w}(p)=w$. %$$(p_{\tilde{w}(1)},\ldots,p_{\tilde{w}(k)})=(w_1,\ldots,w_k).$$
\end{lemma}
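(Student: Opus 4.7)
The plan is to translate both $\quinv$ and $\eta$ on this two-row rectangle into a generalized inversion count under the total ordering $\overset{a-1}{\lessdot}$ of Definition~\ref{Def:generlization_inv}, and then apply the preceding lemma (which evaluates $L_p(w)$ as precisely such a count) to the reduced sequence $\alpha(p)$.

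The equality $\eta(\tau)-\eta(\sigma)=\quinv(\tau)-\quinv(\sigma)$ will come for free. Since the diagram is a two-row rectangle, no two columns have different heights, so the $\S$-triple sum in (\ref{E:etai}) vanishes and $\eta=\eta_{\S}$; moreover the top row is the constant $a$, so every quadruple $(z,w,u,v)$ in $\sigma$ has $z=w=a$, and clause~(3) of Definition~\ref{Def:typeqinv} gives $\Q_{\S}(a,a,u,v)=\Q(a,u,v)$. Summing over quadruples yields $\eta(\sigma)=\quinv(\sigma)$ and $\eta(\tau)=\quinv(\tau)$, so the problem reduces to proving $L_p(w)=\quinv(\tau)-\quinv(\sigma)$.

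To translate $\quinv$, I will verify by a short case check on Definition~\ref{Def:quinv} --- splitting on whether $b=a$, $b<a$, or $b>a$ and then comparing $c$ with $a$ and with $b$ --- that
\begin{equation*}
\Q(a,b,c)=1 \iff b\overset{a-1}{<}c.
\end{equation*}
Summing over column pairs $i<j$ gives $\quinv(\sigma)=\overset{a-1}{<}\text{-}\coinv(p)=\overset{a-1}{\lessdot}\text{-}\inv(p)$ and similarly $\quinv(\tau)=\overset{a-1}{\lessdot}\text{-}\inv(w)$, so the goal reduces further to $L_p(w)=\overset{a-1}{\lessdot}\text{-}\inv(w)-\overset{a-1}{\lessdot}\text{-}\inv(p)$.

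To finish, I will use that $\tilde{w}\in\widetilde{\sym}(\alpha(p))$ preserves the relative order of entries $\geq a$, so $\tilde{w}$ is also the shortest permutation sending $\alpha(p)$ to $\alpha(w)$; hence $L_p(w)=L_{\alpha(p)}(\alpha(w))$. Since $\alpha(p)$ consists of the entries $<a$ in weakly decreasing order followed by copies of $a$ (trivially weakly decreasing), the hypothesis of the preceding lemma is satisfied and $L_{\alpha(p)}(\alpha(w))=\overset{a-1}{\lessdot}\text{-}\inv(\alpha(w))$. I will then partition the $\overset{a-1}{\lessdot}$-inversions of any sequence into three types (both entries $<a$, both $\geq a$, mixed): those involving at least one entry $<a$ depend only on positions and on which side of $a$ each entry lies, hence are preserved when the entries $\geq a$ are replaced by $a$, while the type with both entries $\geq a$ is annihilated by that replacement since all such entries collapse to $a$. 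Writing $J(\cdot)$ for the number of $\overset{a-1}{\lessdot}$-inversions involving two entries $\geq a$, this gives $\overset{a-1}{\lessdot}\text{-}\inv(\alpha(w))=\overset{a-1}{\lessdot}\text{-}\inv(w)-J(w)$ and analogously for $p$; since $\alpha(p)$ is $\overset{a-1}{\lessdot}$-sorted we have $\overset{a-1}{\lessdot}\text{-}\inv(\alpha(p))=0$, and the preservation of the relative order of $\geq a$ entries under $\tilde{w}$ forces $J(w)=J(p)$. Chaining these four identities yields $L_p(w)=\overset{a-1}{\lessdot}\text{-}\inv(w)-\overset{a-1}{\lessdot}\text{-}\inv(p)$, as required. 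The one genuinely delicate step will be the case check $\Q(a,b,c)=1\iff b\overset{a-1}{<}c$; it is elementary but splits into several subcases, whereas the inversion bookkeeping in the final step is routine once separated by pair-type.
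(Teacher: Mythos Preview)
Your proposal is correct and follows essentially the same approach as the paper. The paper's proof is terser: it directly writes $\quinv(\tau)-\quinv(\sigma)=\sum_{i<j}\chi(w_j<a\le w_i)+\sum_{i<j}\chi(w_i<w_j<a)=\overset{a-1}{\lessdot}\text{-}\inv(\alpha(w))$ and then invokes (\ref{E:lp}) together with $L_p(w)=L_{\alpha(p)}(\alpha(w))$, whereas you unpack the same computation via the clean characterization $\Q(a,b,c)=1\iff b\overset{a-1}{<}c$ and the explicit $J(\cdot)$ bookkeeping---but the underlying idea (reduce to $\overset{a-1}{\lessdot}$-inversions of $\alpha(w)$ and apply the preceding length lemma) is identical.
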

\begin{proof}
	The second equality of (\ref{E:symquinv}) holds by Definitions \ref{Def:typeqinv} and \ref{Def:eta}. Observe that 
	\begin{align*}
	\quinv(\tau)-\quinv(\sigma)=\sum_{i<j}\chi(w_j<a\le w_i)+\sum_{i<j}\chi(w_i<w_j<a)=\overset{a-1}{\lessdot}\textit{-}\inv(\alpha(w))
	\end{align*}
	where the last equality is true by the definition of $\alpha(w)$. Consequently the statement (\ref{E:symquinv}) holds in view of $L_p(w)=L_{\alpha(p)}(\alpha(w))$ and (\ref{E:lp}).
\end{proof}

\begin{corollary}\label{Cor:ellp}
	Under the assumption of Lemma \ref{L:two_row}, we have
	\begin{align}\label{E:length1}
	\sum_{\tilde{w}\in\widetilde{\mathsf{Sym}}(\alpha(p))}t^{L_{p}(w)}=	\sum_{\tilde{w}\in\widetilde{\mathsf{Sym}}(\alpha(p))}t^{\ell(\tilde{w})}.
	\end{align}
\end{corollary}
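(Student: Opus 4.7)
The plan is to establish the corollary termwise: I claim that for each $\tilde{w}\in\widetilde{\sym}(\alpha(p))$ the exponent on the left equals the exponent on the right. Once this is in hand, equality of the two sums is automatic because they are indexed by the same set.

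First I would unpack Definition \ref{Def:reduced}. By construction, $\widetilde{\sym}(\alpha(p))$ is the set of shortest permutations realizing each rearrangement of $\alpha(p)$. Writing $\alpha(w)=\tilde{w}(\alpha(p))$, this means
\[
\ell(\tilde{w})=L_{\alpha(p)}(\alpha(w))
\]
for every $\tilde{w}$ in the index set.

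Second, I would invoke Lemma \ref{L:two_row}, in which this same $\tilde{w}$ is characterized as the \emph{shortest} permutation satisfying $\tilde{w}(p)=w$, where $w$ is the rearrangement of $p$ obtained by preserving the relative order of the entries $\geq a$. Thus $\ell(\tilde{w})=L_p(w)$. The coincidence of the two ``shortest'' conditions is genuine: the minimal permutation taking $\alpha(p)$ to $\alpha(w)$ is precisely the one whose action preserves the left-to-right order of the repeated $a$-entries of $\alpha(p)$, and this in turn corresponds exactly to preserving the relative order of the entries of $p$ that are $\geq a$. This is the identity $L_p(w)=L_{\alpha(p)}(\alpha(w))$ already verified in the proof of Lemma \ref{L:two_row}.

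Combining the two displays gives $L_p(w)=\ell(\tilde{w})$ for every summand, so the two generating functions agree term by term. There is no substantive obstacle; the corollary is a bookkeeping reformulation of the identification inside Lemma \ref{L:two_row}. Its utility is that rewriting the exponent as the length of a permutation permits a subsequent evaluation of the sum as a $t$-multinomial via Lemma \ref{L:length_inv} (applied after reindexing $\widetilde{\sym}(\alpha(p))$ through a weakly increasing arrangement of the multiset of entries of $\alpha(p)$, for which the coset-representative description of $\widetilde{\sym}$ is standard).
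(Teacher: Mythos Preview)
Your proof is correct, and it is in fact more direct than the paper's. You establish the equality termwise via
\[
L_p(w)=L_{\alpha(p)}(\alpha(w))=\ell(\tilde w),
\]
the first equality being the one recorded in the proof of Lemma~\ref{L:two_row} and the second being immediate from the definition of $\widetilde{\sym}(\alpha(p))$ once one notes $\alpha(w)=\tilde w(\alpha(p))$. The paper instead routes through equation~\eqref{E:lp} to rewrite $L_{\alpha(p)}(\alpha(w))$ as $\overset{a-1}{\lessdot}$-$\inv(\alpha(w))$ and then invokes Lemma~\ref{L:length_inv} to identify \emph{both} sides with the same $t$-multinomial. Your argument bypasses this detour entirely; the references to \eqref{E:lp} and Lemma~\ref{L:length_inv} are not actually needed to prove the corollary as stated, only for the subsequent evaluation in Proposition~\ref{prop:q1} (which you correctly anticipate in your closing remark). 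Your explanation of why the two ``shortest'' conditions coincide---that minimality for $\alpha(p)$ forces preservation of the relative order of the $a$-block, hence of the entries of $p$ that are $\geq a$---is also a welcome clarification of the identity $L_p(w)=L_{\alpha(p)}(\alpha(w))$, which the paper simply asserts.
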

\begin{proof}
	This is a direct consequence of (\ref{E:lp}) where $L_p(w)=L_{\alpha(p)}(\alpha(w))$ and Lemma \ref{L:length_inv}.
	%For $p=\alpha(p)$, we find $\quinv(\sigma)=0$. Let $(\prec=\overset{a-1}{<})$, then
	%Lemma \ref{L:two_row} shows that
	%\begin{align*}
	%L_{p}(\tilde{w})=\quinv(\tau)=\prec\text{-}\mathsf{coinv}(w).
	%\end{align*}
	%Combining Lemma \ref{L:length_inv}, we are led to (\ref{E:length1}).
\end{proof}

\begin{lemma}\label{L:symQ1}
Let $\lambda=(k,k)$ and $\sigma\in \T(\lambda)$ be a canonical tableau drawn in Figure \ref{Fig:1} where $p=(p_1,\ldots,p_k)$ is a non-descent block.
Let $\tau$ be the tableau obtained from $\sigma$ by permuting the entries of $p$ into $w=(w_1,\ldots,w_k)$ and $a_i\le w_i$ for all $i$. 

Then, if $(v>u\ge z>w)\in \S\,\cap a_2$, we have 
\begin{align}\label{E:symquinv2}
L_{p}(w)=\eta(\tau)-\eta(\sigma).
\end{align}

\end{lemma}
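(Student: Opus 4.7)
My plan is to prove the identity via a pairwise computation, working primarily in the sorted frame determined by $\pi^\circ$. Since $\sigma$ is canonical with $p$ a single non-descent block, Lemma \ref{L:equ2Def} ensures that $\sigma'=\Delta_{\pi^\circ}(\sigma)$ has both $a'=\pi^\circ(a)$ and $p'=\pi^\circ(p)$ weakly decreasing. Because $\sigma$ and $\tau$ share the top row $a$ and the same column types (all non-descent), the same $\pi^\circ$ applies to $\tau$; the resulting $\tau'=\Delta_{\pi^\circ}(\tau)$ has top row $a'$, bottom row $w'=\pi^\circ(w)$, and the non-descent constraint $a'_i\le w'_i$ is preserved.

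For each pair $(i,j)$ with $i<j$, I will classify when the quadruple at those columns is an $\S$-quadruple, using the hypothesis $(v>u\ge z>w)\in\S\cap a_2$ together with Definition \ref{Def:typeqinv}. In the non-descent regime ($u\ge z$, $v\ge w$), this analysis yields: if $a'_i=a'_j$, it is an $\S$-quadruple iff the bottom entry at $j$ exceeds that at $i$; if $a'_i>a'_j$, it is an $\S$-quadruple iff the bottom entry at $j$ is strictly less than $a'_i$ (the $B$-type of Lemma \ref{L:setSi}) or strictly exceeds the bottom entry at $i$ (the chosen $a_2$-type), these alternatives being mutually exclusive under non-descent. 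Applied to $\sigma'$, only the first alternative survives in the $a'_i>a'_j$ case (since $p'$ is weakly decreasing), giving $\eta(\sigma')=\#\{i<j\colon a'_i>a'_j,\; p'_j<a'_i\}$.

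The central identity is the cancellation
\[\#\{i<j\colon a'_i>a'_j,\; w'_j<a'_i\}=\#\{i<j\colon a'_i>a'_j,\; p'_j<a'_i\}.\]
To prove it, fix a threshold $r$; non-descent forces $\{j\colon w'_j<r\}\subseteq\{j\colon a'_j<r\}$, so the left count collapses to $\#\{j\colon w'_j<r\}$, which coincides with $\#\{j\colon p'_j<r\}$ because $w'$ and $p'$ share the same multiset. After this cancellation, the surviving contributions give $\eta(\tau')-\eta(\sigma')=\#\{i<j\colon w'_i<w'_j\}=\coinv(w')=L_{p'}(w')$, the last equality by Remark \ref{Rem:5} since $p'$ is weakly decreasing.

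Finally, I will transfer back to the original frame. By Definition \ref{Def:delta}, $\delta_{\tilde w}^r$ applies $\tilde w$ in the sorted coordinate system via $\Delta_{\pi^\circ}$, so $L_p(w)$ is naturally identified with $L_{p'}(w')$ (consistent with the example following that definition). For $\eta$, since $\Delta_{\pi^\circ}$ is a fixed sequence of column swaps determined solely by $a$ (common to $\sigma$ and $\tau$), each swap induces the same $\eta$-change on both tableaux, so $\eta(\tau)-\eta(\sigma)=\eta(\tau')-\eta(\sigma')$. The principal obstacle will be making this last invariance rigorous: a full column swap may alter $\eta$ through its interactions with every other column, and certifying that these alterations coincide for $\sigma$ and $\tau$ requires verifying that the $\S$-quadruple indicators involving the swapped columns and any third column depend only on the shared top-row data, not on the bottom rows where $\sigma$ and $\tau$ differ.
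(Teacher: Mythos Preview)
Your proposal is correct and follows essentially the same route as the paper: pass to the $\pi^\circ$-sorted frame, use the hypothesis $(v>u\ge z>w)\in\S\cap a_2$ together with the mandatory element $(u\ge z>v\ge w)$ from property~(5) to characterise $\Q_\S$ on non-descent columns, show the ``$a'_i>w'_j$'' contributions cancel by a multiset argument, and read off $\coinv(\pi^\circ(w))$. The paper's proof is organised exactly this way.

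Your anticipated ``principal obstacle'' is not one. When $\Delta_{\pi^\circ}$ is applied at the top row of a two-row tableau, every constituent $\delta_i$ either does nothing (if the two top entries agree) or swaps the two full columns (if they differ). A full column swap leaves $\eta$ \emph{unchanged}, not merely ``changed by the same amount on $\sigma$ and $\tau$'': the quadruple between the two swapped columns is preserved by property~(1) of Definition~\ref{Def:typeqinv}, and for any third column the multiset of two quadruples $\{(\text{col }j,\text{col }i),(\text{col }j,\text{col }i{+}1)\}$ is literally unchanged (the contents have merely exchanged indices). Hence $\eta(\Delta_{\pi^\circ}(\sigma))=\eta(\sigma)$ and $\eta(\Delta_{\pi^\circ}(\tau))=\eta(\tau)$ outright; there is no delicate cancellation of third-column interactions to verify. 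This is exactly the one-line justification the paper gives (``as a consequence of property~(1)'').
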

\begin{proof}
	For both tableaux $\sigma$ and $\tau$, we rearrange the entries of $a=(a_1,\ldots,a_k)$ into a weakly decreasing sequence by the permutation $\pi^{\diamond}$ in Definition \ref{Def:perm2}. 
	As a consequence of property (1) of $\S$ in Definition \ref{Def:typeqinv}, we have 
	$\eta(\Delta_{\pi^{\diamond}}(\sigma))=\eta(\sigma)$ and $\eta(\Delta_{\pi^{\diamond}}(\tau))=\eta(\tau)$ for all $\S$. 
	
	Since $\tilde{w}\in\widetilde{\sym}(p)$ is the shortest permutation that maps $p$ to $w$, which also transforms $\pi^{\diamond}(p)$ to $\pi^{\diamond}(w)$ where $\pi^{\diamond}(p)$ is weakly decreasing by Lemma \ref{L:equ2Def}, we are led to $L_p(w)=\mathsf{coinv}(\pi^{\diamond}(w))$ by (\ref{E:lp}) for the case $a=N+1$. Therefore (\ref{E:symquinv2}) is equivalent to 
	\begin{align}\label{E:coinv1}
	\eta(\Delta_{\pi^{\diamond}}(\tau))-\eta(\Delta_{\pi^{\diamond}}(\sigma))=\mathsf{coinv}(\pi^{\diamond}(w)).
	\end{align}
	Suppose that $(a,b)$ and $(c,d)$ are two columns of $\Delta_{\pi^{\diamond}}(\tau)$ from left to right, then $a\ge c$. We assert that the set of pairs $(a,d)$ such that $a>d$ and $d$ is to the right of $a$ is invariant under the permutation $\tilde{w}$. 
	Since the entries of the top rows of $\Delta_{\pi^{\diamond}}(\sigma)$ and $\Delta_{\pi^{\diamond}}(\tau)$ are identical and weakly decrease from left to right, any entry $x$ of the bottom row that is smaller than $a$ must appear to the right of $a$ so that all columns are non-descent ones. Therefore, the assertion is validated. 
	
	On the other hand, if $b<d$, then $c\le a\le b<d$ and $\CMcal{Q}_{\S}(a,b,c,d)=1$ by noting that $(v>u\ge z>w)\in \S\,\cap a_2$; otherwise $b\ge d$, then $\CMcal{Q}_{\S}(a,c,b,d)=1$ if and only if $a>d$. Since the number of the pairs $(a,d)$ with $a>d$ is preserved by $\tilde{w}$, it follows that the number of $\S$-quadruples of $\Delta_{\pi^{\diamond}}(\sigma)$ equals the number of $\S$-quadruples $(a,b,c,d)$ of $\Delta_{\pi^{\diamond}}(\tau)$ such that $a>d$. Therefore the LHS of (\ref{E:coinv1}) equals the number of pairs $(b,d)$ with $b<d$, which is exactly $\mathsf{coinv}(\pi^{\diamond}(w))$, as given by (\ref{E:coinv1}).

\end{proof}

%Since any permutation is indeed a filling of one-row diagram, by Definition \ref{Def:inv} we obtain for $w=w(1)\cdots w(k)\in S_k$ in one-line notation that
%$\inv(w)=|\{(i,j):1\le i<j\le k\,\mbox{ and }\,w(i)>w(j)\}|$, consistent with the classical inversion numbers of permutations. 

%\begin{remark}\label{Rem:1}
%	If $\alpha(p)$ is a sequence of identical entries, then $\widetilde{\sym}(\alpha(p))$ contains only the identity permutation, contributing trivially to $\tilde{w}$.
%\end{remark}

We are now in a position to prove Theorem \ref{P:can11}, which further gives Theorem \ref{T:2}.

\subsection{Proof of Theorem \ref{P:can11}}
It suffices to prove (\ref{E:count11}) for rectangular fillings to complete the proof of Theorem \ref{P:can11} by Lemmas \ref{L:rec} and \ref{L:dec1}. The starting point of the proof is Propositions \ref{prop:q1} and \ref{prop:q2} for canonical tableaux.

\begin{proposition}\label{prop:q1}
	Following the notations of (\ref{E:coeffd}) and (\ref{E:coeffd2}),
	let $\sigma_j$ be a canonical tableau of rectangular shape with height $j$ and let
	$p$ be a neutral block in the $i$th row of $\sigma_j$ whose upper neighbor is a sequence of identical entries $N+1-h$ for some $1\le h\le N$. Then
	\begin{align}\label{E:alphap1}
	\sum_{\tilde{w}\in \widetilde{\sym}(\alpha(p))}t^{\ell(\tilde{w})}=\begin{bmatrix} s_{N}^{h} \\ s_{h+1}^{h}-s_{h}^{h},\ldots,s_{N}^{h}-s_{N-1}^{h},s_{h}^{h}
	 \end{bmatrix}_{t}.
	\end{align}	
\end{proposition}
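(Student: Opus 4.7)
The plan is to directly unpack $\alpha(p)$ as an explicit multiset using the bookkeeping of $\{s_k^h\}$ from $\sigma_j\vert_i^{i+1}$, and then apply Lemma \ref{L:length_inv} which expresses $\sum_{\tilde w}t^{\ell(\tilde w)}$ as a $t$-multinomial indexed by multiplicities. First, I would unpack the structure of the neutral block $p$. Since $p$ has constant upper neighbor $(N+1-h,\ldots,N+1-h)$, it consists precisely of the bottom entries of those columns of $\sigma_j\vert_i^{i+1}$ whose top is $N+1-h$, and there are exactly $s_N^h=\nu_{i+1,j}^h-\nu_{i+1,j}^{h-1}$ such columns. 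Among these, $s_h^h$ are non-descent columns (bottom $\ge N+1-h$), while for each index $k$ with $h<k\le N$, exactly $s_k^h-s_{k-1}^h$ are descent columns whose bottom equals $N+1-k$.

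Second, I would pass from $p$ to $\alpha(p)$. By definition, $\alpha(p)$ replaces every entry of $p$ that is $\ge N+1-h$ by $N+1-h$. Combining this with the column count above, $\alpha(p)$ is a sequence of length $s_N^h$ in which the value $N+1-h$ appears exactly $s_h^h$ times and the value $N+1-k$ appears exactly $s_k^h-s_{k-1}^h$ times for each $h<k\le N$. These multiplicities sum to $s_N^h$, as required.

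Third, I would invoke Lemma \ref{L:length_inv}, which gives
\begin{align*}
\sum_{\tilde{w}\in\widetilde{\sym}(\alpha(p))}t^{\ell(\tilde{w})}
&=\begin{bmatrix} s_N^h \\ s_h^h,\,s_{h+1}^h-s_h^h,\,\ldots,\,s_N^h-s_{N-1}^h\end{bmatrix}_t.
\end{align*}
Since the $t$-multinomial $\binom{n}{m_1,\ldots,m_r}_t=\frac{[n]_t!}{[m_1]_t!\cdots[m_r]_t!}$ is symmetric in its lower arguments, I may reorder the multiplicities to match the claimed form, placing $s_h^h$ last; this yields exactly the right-hand side of \eqref{E:alphap1}.

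The only non-routine point is the careful bookkeeping of $s_k^h$ in terms of descent/non-descent columns whose top is $N+1-h$; once this correspondence is made, the identity reduces to a direct application of the classical permutation-enumeration identity in Lemma \ref{L:length_inv} and the symmetry of the $t$-multinomial. The fact that $\sigma_j$ is canonical enters only in guaranteeing that the neutral block $p$ is well-defined (a single maximal run with constant upper neighbor), so that identifying the multiset of $\alpha(p)$ with the column counts $\{s_k^h\}$ is unambiguous.
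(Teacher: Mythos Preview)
Your proof is correct and follows essentially the same approach as the paper's own proof: identify the multiplicities of each value in $\alpha(p)$ via the column data $\{s_k^h\}$, then invoke Lemma~\ref{L:length_inv} to obtain the $t$-multinomial. One small point worth tightening: Lemma~\ref{L:length_inv} as stated requires the reference sequence to be weakly increasing (equivalently, to have a parabolic stabilizer), so you should note that in a canonical tableau the entries of $\alpha(p)$ with the same value occur in consecutive positions (by condition~(I) of Definition~\ref{Def:3} and the definition of $\alpha$), which is exactly what makes the lemma applicable; the paper handles this by citing Corollary~\ref{Cor:ellp} (equation~\eqref{E:length1}) alongside Lemma~\ref{L:length_inv}. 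Your closing remark that canonicity ``enters only in guaranteeing that the neutral block $p$ is well-defined'' is therefore not quite right: neutral blocks are defined for arbitrary fillings, and canonicity is used here to control the \emph{ordering} of $\alpha(p)$, not merely its existence.
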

\begin{proof}
The number of entries $N+1-h$ in $(i+1)$th row of $\sigma_j$ equals $s_{N}^h$, which also equals the number of elements of $\alpha(p)$. Furthermore, the sequence $\alpha(p)$ has exactly $s_{m}^h-s_{m-1}^h$ numbers of entries $N+1-m$ for $1\le h<m\le N$ and $s_h^h$ numbers of $N+1-h$. Thus (\ref{E:alphap1}) follows from (\ref{E:length1}) and Lemma \ref{L:length_inv}.
\end{proof}
\begin{proposition}\label{prop:q2}
	Following the notations of (\ref{E:coeffd}) and (\ref{E:coeffd2}),
	let $\sigma_j$ be a canonical tableau of rectangular shape with height $j$ and let
	$q$ be the unique non-descent block in the $i$th row of $\sigma$ with upper neighbor $a$. 
	Then
	\begin{align}\label{E:beta1}
	\sum_{\substack{w\in \sym(q)\\ w\ge a}}t^{L_q(w)}=\prod_{k=1}^{N-1}  \begin{bmatrix} \nu_{i,j}^{k+1}-s_{k+1}^{1,k} \\ \nu_{i,j}^{k}-s_{k}^{1,k} \end{bmatrix}_{t}.
	\end{align}	
\end{proposition}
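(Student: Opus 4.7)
My plan is to reduce the weighted sum to a product of $t$-binomials via a weight-preserving bijection between admissible permutations $w$ and tuples of subsets $(T_1, \ldots, T_{N-1})$, one for each ``level'' of values appearing in $q$.

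The first step is to exploit the canonical hypothesis. Condition (I) of Definition \ref{Def:3}, applied to the pair of rows at the top of $\sigma_j$ (recalling that the row above is filled with zeros), forces the top row of $\sigma_j$ to be weakly decreasing, and condition (II) then forces $q$ to be weakly decreasing in tableau order as well. Consequently Remark \ref{Rem:5} yields $L_q(w) = \coinv(w)$, so the left-hand side equals $\sum_{w \in \sym(q),\, w \ge a} t^{\coinv(w)}$. Writing $c_k$ (resp.\ $b_k$) for the number of $(N{+}1{-}k)$-entries in $q$ (resp.\ in $a$) and $C_k, B_k$ for their partial sums, one checks that $s_k^{1,k}$ equals the number of columns of $\sigma_j\vert_i^{i+1}$ whose upper and lower entries are both at least $N+1-k$; this yields the identities $\nu_{i,j}^k - s_k^{1,k} = C_k - B_k =: d_k$ and $\nu_{i,j}^{k+1} - s_{k+1}^{1,k} = d_k + c_{k+1}$, so the right-hand side reads $\prod_{k=1}^{N-1}\binom{d_k + c_{k+1}}{d_k}_t$.

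For each admissible $w$ and $k \in \{1, \ldots, N-1\}$, I would define $V_k = \{\ell > B_k : w_\ell \ge N-k\}$ and $U_k = \{\ell \in V_k : w_\ell \ge N+1-k\}$; a direct count yields $|V_k| = d_k + c_{k+1}$ and $|U_k| = d_k$. Relabeling the elements of $V_k$ by $\{1, \ldots, d_k+c_{k+1}\}$ in tableau order identifies $U_k$ with a subset $T_k$ of size $d_k$, and the assignment $w \mapsto (T_1, \ldots, T_{N-1})$ is a bijection between admissible $w$'s and tuples with $T_k \subseteq [d_k+c_{k+1}]$ of cardinality $d_k$: the inverse reconstructs $w$ recursively from $k = N-1$ down to $k=1$ by assigning value $N-k$ to $V_k \setminus U_k$ and deferring $U_k$ to lower levels, and the constraint $w \ge a$ is automatic because any $\ell$ with $a_\ell = N+1-k$ satisfies $\ell \le B_{k'}$ for all $k' \ge k$ and hence is never assigned a value smaller than $N+1-k$.

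The critical ingredient, and what I expect to be the main obstacle, is the weight compatibility $\coinv(w) = \sum_{k=1}^{N-1} \inv(T_k)$, where $\inv(T_k)$ counts pairs $(i, j)$ with $i < j$, $i \notin T_k$, $j \in T_k$. The key observation is that every coinversion pair $(p, q)$ of $w$ (with $p < q$ and $w_p < w_q$) is attributed to the \emph{unique} level $k^* = N - w_p$: since $a$ is weakly decreasing, $p < q$ forces $a_q \le a_p \le w_p$, placing both $p, q$ strictly beyond $B_{k^*}$, so $p \in V_{k^*} \setminus U_{k^*}$ and $q \in U_{k^*}$. Summing the identity $\binom{d_k + c_{k+1}}{d_k}_t = \sum_{T_k} t^{\inv(T_k)}$ (a special case of Lemma \ref{L:length_inv}) over all tuples then completes the proof.
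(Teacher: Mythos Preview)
Your level-by-level bijection is essentially the paper's argument restated: the paper decomposes $\pi^{\circ}(w)$ into the two-letter words $w(x)$, and your pairs $(V_k,U_k)$ record exactly the same data, with your identity $\coinv(w)=\sum_k\inv(T_k)$ matching the paper's $\coinv(\pi^{\circ}(w))=\sum_i\coinv(w(x_i))$.

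There is, however, a genuine gap in your first step. You assert that condition~(II) of Definition~\ref{Def:3} forces $q$ to be weakly decreasing in tableau order. Condition~(II) only yields this \emph{once the upper neighbour $a$ is itself weakly decreasing} (this is the content of Lemma~\ref{L:equ2Def}); your deduction ``top row of $\sigma_j$ is weakly decreasing $\Rightarrow$ $q$ is weakly decreasing'' conflates row $j$ with row $i{+}1$. For $i+1<j$ the entries of row $i{+}1$ above the non-descent columns need not be sorted. A concrete canonical counterexample is the three-row tableau with rows $(3,2,2)$, $(2,1,2)$, $(3,2,3)$ from top to bottom: all columns between rows $1$ and $2$ are non-descents, so $q=(3,2,3)$ and $a=(2,1,2)$. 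Here $q$ is not weakly decreasing, $L_q(2,3,3)=1$ while $\coinv(2,3,3)=2$, and your sum evaluates to $1+2t$ rather than the required $1+t+t^2$. Your downstream steps also tacitly use that $a$ is sorted (for instance, ``$a_\ell=N{+}1{-}k$ implies $\ell\le B_{k'}$'' fails otherwise).

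The paper's proof addresses precisely this point by first applying the column permutation $\pi^{\circ}$ of Definition~\ref{Def:perm2}, so that the upper neighbour becomes weakly decreasing and (by Lemma~\ref{L:equ2Def}) the non-descent block becomes weakly decreasing as well; this is the passage from $L_q(w)$ to $\coinv(\pi^{\circ}(w))$ in~\eqref{E:beta2}. Once you insert this sorting step, your bijection goes through verbatim with $q$, $a$, $w$ replaced by their $\pi^{\circ}$-images.
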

\begin{proof}
Equations (\ref{E:symquinv2}) and (\ref{E:coinv1}) yield that
\begin{align}\label{E:beta2}
\sum_{\substack{w\in \sym(q)\\ w\ge a}}t^{L_q(w)}
=\sum_{\substack{w\in \sym(q)\\ w\ge a}}t^{\mathsf{coinv}(\pi^{\diamond}(w))}
\end{align}
and we are going to see that it equals the RHS of (\ref{E:beta1}). For every $w\in \sym(q)$ and $w\ge a$, we shall decompose $\pi^{\diamond}(w)$ into a sequence of subwords. For every entry of $\pi^{\diamond}(w)$, say $x$, let $w(x)$ denote the sequence derived from $\pi^{\diamond}(w)$ by choosing all entries $w_i$ such that $a_i\le x\le w_i$ from left to right, and then replacing all entries $w_i$ that are larger than $x$ by $x+1$. 

We claim that the multiplicities of $x$ and $x+1$ in $w(x)$ are independent of the choice of $w$. That is, $w(x)$ is a permutation of $q(x)$. The occurrences of $x$ in $w(x)$ and $q(x)$ are trivially the same, as $w$ is a permutation of $q$ and $w,q$ are non-descent blocks. Note that the multiplicity of $x+1$ in $w(x)$ is the number of positions $i$ at which $a_i\le x<w_i$, which is equal to the number of entries $a_i$ such that $a_i\le x$, minus the number of $w_i$ for which $w_i\le x$. Since such numbers are invariant under any permutation of $w$, we conclude that the occurrences of $x+1$ in $w(x)$ and $q(x)$ are equal, thus $w(x)$ is a permutation of $q(x)$. 

Suppose that $\pi^{\diamond}(w)$ has $m$ distinct elements, say, $x_i$ is the $i$th smallest distinct entry of $w$ for $1\le i\le m$. The map $\pi^{\diamond}(w)\mapsto (w(x_1),\ldots,w(x_m))$ is a bijection from the set of $\pi^{\diamond}(w)$ satisfying $w\in \sym(q)$ with $w\ge a$, to the set $\sym(q(x_1))\times\cdots\times \sym(q(x_m))$. In addition,
\begin{align*}
\mathsf{coinv}(\pi^{\diamond}(w))=\sum_{i=1}^{m}\mathsf{coinv}(w(x_i)).
\end{align*}
By Lemma \ref{L:length_inv}, the RHS of (\ref{E:beta2}) becomes
\begin{align}\label{E:beta4}
\prod_{i=1}^m\sum_{\substack{w(x_i)}}t^{\mathsf{coinv}(w(x_i))}=\prod_{k=1}^{N-1}  \begin{bmatrix} \nu_{i,j}^{k+1}-s_{k+1}^{1,k} \\ \nu_{i,j}^{k}-s_{k}^{1,k} \end{bmatrix}_{t},
\end{align}
where the sum runs over all $w(x_i)\in \sym(q(x_i))$. The detailed reason for the above equality is the following. For $1\le k<N$,  
\begin{itemize}
	\item ${\sf m}_1(k)=\nu_{i,j}^{k+1}-s_{k+1}^{1,k}-\nu_{i,j}^{k}+s_{k}^{1,k}$
	counts the number of non-descent columns $(z,u)$ such that $1\le z\le N-k$ and $u=N-k$ between rows $i+1$ and $i$.\\
	\vspace{-2mm}
	\item ${\sf m}_2(k)=\nu_{i,j}^{k}-s_{k}^{1,k}$ is the number of non-descent columns $(z,u)$ such that $1\le z\le N-k$ and $N+1-k\le u\le N$ between rows $i+1$ and $i$.
\end{itemize}
Let $x=N-k$, then $w(x)$ contains exactly ${\sf m}_1(k)$ occurrences of $x$ and ${\sf m}_2(k)$ occurrences of $x+1$ for all $1\le k<N$. In consequence, (\ref{E:beta4}) follows from Lemma \ref{L:length_inv}, which completes the proof of (\ref{E:beta1}).
\end{proof}

We now reach a point to complete the proof of Theorem \ref{P:can11}.

{\em Proof of Theorem \ref{P:can11}}. For canonical tableaux, we only need to show (\ref{E:count11}) for rectangular diagrams with the help of Lemmas \ref{L:rec} and \ref{L:dec1}. For $\lambda=(n^m)$ and $\eta\in\A^+$, if $(v>u\ge z>w)\in \S\,\cap a_2$, then we shall see (\ref{E:count11}) by induction on the number $m$ of rows.

Suppose that $m=1$ and $\sigma\in \T(\lambda)$ is a canonical tableau, that is, the entries of $\sigma$ from left to right form a weakly decreasing sequence $(\sigma_1,\ldots,\sigma_n)$. Since each upper neighbor of $\sigma_i$ is zero, we have $s_k^h=0$ for all $k\ge h$, thus it follows from (\ref{E:beta1}), (\ref{E:beta2}) and (\ref{E:coeffd2}) that (with $\eta(\tau)=\quinv(\tau)$)
\begin{align*}
\sum_{\tau\in\sym(\sigma)}t^{\quinv(\tau)}=\sum_{\tau\in\sym(\sigma)}t^{\mathsf{coinv}(\tau)}
=\prod_{k=1}^{N-1}{\nu_{1,1}^{k+1}\brack \nu_{1,1}^{k}}_t=d_{\diamond}(\sigma)
\end{align*}
which is consistent with (\ref{E:count11}) when $\sigma$ is a canonical one-row tableau.

Suppose that (\ref{E:count11}) holds for $r-1$ rows, where $r\le m$. By Definition \ref{D:family}, for $\tau\in \CMcal{G}(\sigma;r-1)$, $\CMcal{G}(\sigma)$ contains 
$\delta_{\tilde{w}}^{r}(\tau)$ where $\tilde{w}\in \sym^*(Q)\times\widetilde{\sym}(P)$. 
Since $\delta_i^{r}$ only affects the $\quinv$-quadruples between rows $r$ and $r+1$ by Lemma \ref{L:delta}, we find ($\eta_{\S}=\eta$)
\begin{align*}
\eta(\delta_{\tilde{w}}^{r}(\tau))-\eta(\tau)&=\eta(\delta_{\tilde{w}}^{r}(\tau)\vert_{r}^{r+1})-\eta(\tau\vert_{r}^{r+1}).
\end{align*}
Together with Lemmas \ref{L:two_row} and \ref{L:symQ1}, we come to
\begin{align*}
\eta(\delta_{\tilde{w}}^{r}(\tau))=L_p(w)+\eta(\tau)
\end{align*}
where $p=(p_1,\ldots,p_n)$ is the entries of $r$th row of $\sigma$. As a result,
\begin{align}\label{E:rec2}
\sum_{\rho\in \CMcal{G}(\sigma;r)}t^{\eta(\rho)}=\sum_{w=\tilde{w}(p)}t^{L_p(w)}\sum_{\tau\in \CMcal{G}(\sigma;r-1)}t^{\eta(\tau)}.
\end{align}
Propositions \ref{prop:q1} and \ref{prop:q2} simplifies the first sum of the RHS of (\ref{E:rec2}) into
\begin{align*}
\sum_{w=\tilde{w}(p)}t^{L_p(w)}&=
\prod_{k=1}^{N-1}  \begin{bmatrix} \nu_{r,m}^{k+1}-s_{k+1}^{1,k} \\ \nu_{r,m}^{k}-s_{k}^{1,k} \end{bmatrix}_{t}\prod_{h=1}^{N-1}
\begin{bmatrix} s_{N}^{h} \\ s_{h+1}^{h}-s_{h}^{h},\ldots,s_{N}^{h}-s_{N-1}^{h},s_{h}^{h}
\end{bmatrix}_{t}\\
&=\prod_{k=1}^{N-1}  \begin{bmatrix} \nu_{r,m}^{k+1}-s_{k+1}^{1,k} \\ \nu_{r,m}^{k}-s_{k}^{1,k} \end{bmatrix}_{t}\prod_{h\le k}{s_{k+1}^h \brack s_k^h}_t\,.%=d_{\diamond}(\sigma\vert_r^m).
\end{align*}
Applying the induction hypothesis on the second sum of the RHS of (\ref{E:rec2}) and letting $1<r\le m$ yields 
(\ref{E:count11}) for $q=1$. Since $\delta_i^r$ flips two neighbor entries of a neutral block or a non-descent block of $\tau$, Lemma \ref{L:delta} assures that the major index of $\tau$ is preserved, i.e., $\maj(\rho)=\maj(\tau)$ for all $\rho\in \CMcal{G}(\sigma;r)$. Therefore we conclude (\ref{E:count11}).

For the dual canonical tableaux, the coefficient $d_{\dagger}(\sigma)$ is defined as follows. Given a dual canonical tableau $\sigma=\sigma_n \sqcup \cdots \sqcup \sigma_1\in \CMcal{C}_{\dagger}(\lambda)$ where $n=\ell(\lambda)$ and $\sigma_j$ is a rectangular tableau of height $j$, 
the sequence $\nu_{i,j}=(\nu_{i,j}^{1}\le \cdots\le \nu_{i,j}^{N})$ of nonnegative integers is constructed from $\sigma_j$ as follows: Define 
\begin{align*}
\nu_{i,j}^{1}&=\# \, 1's \textrm{ in row } i \textrm{ of } \sigma_j,\\
\nu_{i,j}^{k}-\nu_{i,j}^{k-1}&=\# \, k's \textrm{ in row } i \textrm{ of } \sigma_j,
\end{align*}
for any $1<k\le N$. The sequence $s=\{s_k^h\}_{0\leq h<N,\,h\leq k\le N}$ is determined from $\sigma_j\vert_{i}^{i+1}$ as follows: 
\begin{align*}
s_{k}^{k}&=\#\,\textrm{ columns } (a,b) \textrm{ in } \sigma_j\vert_i^{i+1} 
\textrm{ such that } k+1=a>b,\\
s_{h}^k-s_{h-1}^k&=\#\,\textrm{ columns } (a,b) \textrm{ in } \sigma_j\vert_i^{i+1} 
\textrm{ such that } k+1=a\le b=h
\end{align*}
for $k<h$. Consequently, $s_{N}^k=\nu_{i+1,j}^{k+1}-\nu_{i+1,j}^{k}$ counts the occurrences of $k+1$ in $\sigma_j\vert_{i+1}^{i+1}$, by which $s$ satisfies (\ref{E:defs2}) for $\nu=\nu_{i+1,j}$. Define 
\begin{align}\label{E:coeffdag}
d_{\dagger}(\sigma)&=d_{\dagger}(\sigma_n)\cdots d_{\dagger}(\sigma_1),\,\,\mbox{ where }\,\\
\label{E:coeffdag2}d_{\dagger}(\sigma_j)&=\prod_{i\le j}\prod_{0\le k<N}{\nu_{i,j}^{k+1}-s_{k+1}^{0,k}\brack \nu_{i,j}^{k}-s_{k}^{0,k}}_t\prod_{0\le h\le k}{s_{k+1}^{h}\brack s_{k}^{h}}_t
\end{align}
and $d_{\dagger}(\sigma_i)=1$ if $i$ is not a part of $\lambda'$. The proof of (\ref{E:count13})--(\ref{E:count14}) for dual canonical tableaux follows analogously to the one for canonical tableaux and we omit the details.

\section{Proof of Theorem \ref{T:mono1}}\label{S:thm41}
This section is devoted to enumerating canonical and dual canonical tableaux to establish (\ref{eq6})--(\ref{eq64}) of Theorem \ref{T:mono1}. We shall count the statistics $\eta(\sigma)$ and $\maj(\sigma)$. 

%For this purpose, the statistics $\eta$ for different $\S$-sets $\S_i$ are distinguished. Let $\eta_i=\eta$ defined in (\ref{E:etai}) for $\S=\S_i$ for $1\le i\le 8$. 

Let us remind that the set of sequences $\{\nu_{i,j}\}_{i\le j}$ and $s=\{s_k^h\}_{1\leq h\leq k\le N}$ subject to conditions (\ref{E:defs}), \eqref{eq7} and \eqref{eq8} are constructed from arbitrary canonical tableau $\sigma$ along our way to define the coefficient $d_{\diamond}(\sigma)$ in (\ref{E:coeffd})--(\ref{E:coeffd2}). In the lemma below, we shall see that such a construction is invertible, thus bijective, when the RHS of (\ref{E:coeffd2}) is not zero.
\begin{lemma}\label{L:sigmaj}
	Given a partition $\lambda$ and a fixed integer $j$, the sequences $\{\nu_{i,j}\}_{i\le j}$ with the properties \eqref{eq7}--\eqref{eq8} and the sequence $\{s_k^h\}_{1\leq h\leq k\leq N}$ for each $\nu=\nu_{i+1,j}$ in (\ref{E:defs}) determines a rectangular canonical tableau $\sigma_j$ of height $j$, provided that the RHS of (\ref{E:coeffd2}) is not zero.
\end{lemma}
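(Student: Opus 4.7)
The plan is to show that the map $\sigma_j \mapsto (\{\nu_{i,j}\}, \{s_k^h\})$ defined right before the lemma is a bijection (onto the tuples for which the $t$-multinomials have nonnegative entries), by constructing its inverse explicitly via a top-down reconstruction of $\sigma_j$ that is forced by the canonical conditions~(I) and~(II) of Definition~\ref{Def:3}. I would reconstruct row by row, beginning with the top row: the data $\nu_{j,j}$ specifies that $N{+}1{-}k$ appears $\nu_{j,j}^k - \nu_{j,j}^{k-1}$ times in row $j$, and condition~(I) applied with the implicit zero row above forces the top row to be weakly decreasing, so that row is determined uniquely.

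For the inductive step, suppose rows $i{+}1,\ldots,j$ have been reconstructed. To build row $i$, I would first use the $s$-data to classify each column between rows $i$ and $i{+}1$: for each upper value $a = N{+}1{-}k$ occurring in row $i{+}1$, there are $s_k^k$ non-descent columns and $s_h^k - s_{h-1}^k$ descent columns with lower entry $N{+}1{-}h$ for each $h>k$. Within each neutral block of row $i{+}1$ (the maximal set of positions sharing a common upper value), condition~(I) forces the descents to occupy the leftmost positions with lower entries in weakly decreasing order, and the remaining positions are non-descents. This uniquely determines every descent lower. For the non-descent positions, the multiset of lower entries is then obtained from $\nu_{i,j}$ by subtracting the multiset of descent lowers, and condition~(II) forces a unique assignment: sort the non-descent positions in decreasing order of upper entry (ties broken by position, left to right) and assign the non-descent lowers in weakly decreasing order.

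The main obstacle is showing that this greedy assignment is feasible, namely that every non-descent column receives a lower entry at least as large as its upper entry; the hypothesis that the RHS of~\eqref{E:coeffd2} is nonzero is precisely the combinatorial content of this feasibility. Each $t$-binomial ${\nu_{i,j}^{k+1}-s_{k+1}^{1,k} \brack \nu_{i,j}^{k}-s_{k}^{1,k}}_t$ encodes the chain of inequalities $0 \le \nu_{i,j}^k - s_k^{1,k} \le \nu_{i,j}^{k+1}-s_{k+1}^{1,k}$, which translates into the statement that, for every $k$, the number of non-descent lowers of value $\ge N{+}1{-}k$ does not exceed the number of non-descent positions whose upper is $\le N{+}1{-}k$; this is exactly what allows the sorted matching to be completed at every stage. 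The factors ${s_{k+1}^{h}\brack s_{k}^{h}}_t$ similarly enforce $s_k^h \le s_{k+1}^h$, guaranteeing that the descent counts within a neutral block do not overflow its size.

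Once feasibility is in hand, a direct inspection confirms that the reconstructed filling $\sigma_j$ satisfies conditions~(I) and~(II) by construction, hence is canonical. Finally, reading off $\nu_{i,j}$ and $\{s_k^h\}$ from this $\sigma_j$ recovers the original data, which together with the uniqueness at each step of the reconstruction establishes the asserted bijection and completes the proof.
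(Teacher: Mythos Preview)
Your proposal is correct and follows essentially the same approach as the paper: a top-down row-by-row reconstruction, first placing the descent entries within each neutral block (forced by condition~(I)) and then distributing the remaining non-descent entries by the sorted matching (forced by condition~(II)), with feasibility supplied by the nonvanishing of the $t$-binomials in~\eqref{E:coeffd2}.

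One small imprecision to fix: your combinatorial reading of the inequality $\nu_{i,j}^k - s_k^{1,k} \ge 0$ is stated backwards. What is actually needed for the greedy matching to succeed is that, for every threshold $v = N{+}1{-}k$, the number of non-descent lowers with value $\ge v$ is \emph{at least} the number of non-descent positions whose upper entry is $\ge v$ (equivalently, lowers $< v$ do not exceed positions with upper $< v$). Concretely, the non-descent positions with upper $\ge N{+}1{-}k$ number $\sum_{h\le k} s_h^h$, while the non-descent lowers $\ge N{+}1{-}k$ number $\nu_{i,j}^k - \sum_{h<k}(s_k^h - s_h^h)$; the required inequality between these simplifies exactly to $\nu_{i,j}^k \ge s_k^{1,k}$, which is the nonnegativity of the bottom entry of the binomial $\begin{bmatrix}\nu_{i,j}^{k+1}-s_{k+1}^{1,k}\\ \nu_{i,j}^{k}-s_{k}^{1,k}\end{bmatrix}_t$. (This is also the form the paper uses.) Your remark about the factors $\begin{bmatrix} s_{k+1}^h\\ s_k^h\end{bmatrix}_t$ enforcing $s_k^h \le s_{k+1}^h$ is superfluous, since that monotonicity is already built into the hypothesis~\eqref{E:defs}.
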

\begin{proof}
	We prove the statement by building up such a canonical tableau.	
	Start from the top row of $\sigma_j$, that is the $j$th row, we fill in the entries $x$ for $1\le x\le N$ in weakly decreasing order
	so that $N$ appears exactly $\nu_{j,j}^1$ times and $(N-k)$ appears $\nu_{j,j}^{k+1}-\nu_{j,j}^{k}$ times for all $1\le k<N$. 
	
	Suppose that the top $(j-i)$ rows are filled into a canonical one, we are going to construct the $i$th row. For every $1\le k<N$, consider all boxes right below the boxes with the entry $N-k+1$, 
	we fill in entries $N-h$ for $k\le h<N$ in weakly decreasing order from left to right, where the multiplicity of $N-h$ is $s_{h+1}^k-s_h^k$. This is the first-round insertion.
	
	Afterwards there are $s_1^1+\cdots+s_N^N$ boxes in row $i$ without entries where $s_h^h$ boxes have upper neighbors with identical entries $N-h+1$. We shall insert the integer $N$ with multiplicity $\nu_{i,j}^1$ and the element $N-h$ with multiplicity
	\begin{align}\label{E:v1}
	\nu_{i,j}^{h+1}-\nu_{i,j}^{h}-\sum_{k=1}^{h}(s_{h+1}^k-s_h^k)
	\end{align}
	for $1\le h<N$ into these empty boxes in the second round as follows: put these elements in a weakly decreasing order and insert the $m$th largest element to the empty box with the $m$th largest upper neighbor.
	
	We claim that the resulting columns between rows $i$ and $i+1$ must be non-descents. 
	Let us recall that the number of empty boxes whose upper neighbors have entries at least $N-k$ is given by $s_1^1+\cdots+s_{k+1}^{k+1}$. On the other hand, by (\ref{E:v1}), the number of integers that are larger than or equal to $N-k$ of row $i$ in the second-round insertion is 
	\begin{align}\label{E:v2}
	\nu_{i,j}^1+\sum_{h=1}^{k}
	(\nu_{i,j}^{h+1}-\nu_{i,j}^{h}-\sum_{m=1}^{h}(s_{h+1}^m-s_h^m))=\nu_{i,j}^{k+1}-s_{k+1}^{1,k}+\sum_{i=1}^k s_i^i\ge \sum_{i=1}^{k+1} s_i^i.
	\end{align}
	The last inequality holds because of $\nu_{i,j}^{k+1}\ge s_{k+1}^{1,k+1}$ according to the condition that the RHS of (\ref{E:coeffd2}) is not zero. Consequently the claim is true and we have found a unique canonical tableau $\sigma_j$ row-wise from top to bottom, as wished.
\end{proof}
We will make use of Lemma \ref{L:sigmaj} to identify termwise equal formulas of 
$\CMcal{P}_{\lambda\mu}(q,t)$ defined in (\ref{eq61}). Define the statistic $\bar{\eta}(\sigma)$ to be the number of non-$\S$-quadruples and non-$\S$-triples of $\sigma$. For $\sigma\in\T(\lambda)$, we claim that 
\begin{align}\label{E:nlambda}
\bar{\eta}(\sigma)+\eta(\sigma)=n(\lambda')=\sum_{i\ge 1}\binom{\lambda_i}{2}.
\end{align}
That is, the number of quadruples $(z,w,u,v)$ between two columns of equal height and triples $(z,u,v)$ between two columns of different heights of $\dg(\lambda)$ equals $n(\lambda')$. Since such number equals the number of pairs $(u,v)$ of the same row and the $i$-th row contains $\lambda_i$ boxes, we come to (\ref{E:nlambda}). 

Returning to (\ref{E:mmp3}), it follows from (\ref{E:g11}) and (\ref{E:nlambda}) that 
\begin{align*}
\tilde{H}_{\lambda}(X;q,t)&=t^{n(\lambda')}\sum_{\sigma\in\CMcal{C}_{\diamond}(\lambda)}
\sum_{\tau\in\CMcal{G}(\sigma)}q^{\maj(\tau)}t^{-\bar{\eta}(\tau)}x^{\tau}.
\end{align*}
Since $\tilde{H}_{\lambda}(X;q,t)$ is a symmetric function over $\mathbb{N}[q,t]$, we have $[x^{\tau}]\tilde{H}_{\lambda}(X;q,t)=[x^{\mu}]\tilde{H}_{\lambda}(X;q,t)$ where $x^{\mu}=x_N^{\mu_1}\cdots x_{N+1-k}^{\mu_k}$ provided that $\mu=(\mu_1,\ldots,\mu_k)$ is the partition obtained from $\tau$ by arranging the occurrences of each entry of $\tau$, denoted by $\lambda(\tau)=\mu$.
%That is, suppose that  $x^{\tau}=x_{i_1}^{\alpha_1}\cdots x_{i_k}^{\alpha_k}$ for $i_1<\cdots<i_k$ and $\alpha_j\in\mathbb{N}^+$ for $1\le j\le k$. Let $\mu=(\mu_1,\ldots,\mu_k)$ be the partition derived by sorting  $(\alpha_1,\ldots,\alpha_k)$ into a weakly decreasing sequence, denoted by $\lambda(\tau)=\mu$. 
By the bijection $(\nu,s)\mapsto \sigma$ in Lemma \ref{L:sigmaj} and (\ref{eq9}), we have for all $i$,
\begin{align*}
\tilde{H}_{\lambda}(X;q,t)&=t^{n(\lambda')}\sum_{\mu}\sum_{\nu,s}\sum_{\tau\in\CMcal{G}(\sigma)}q^{\maj(\tau)}t^{-\bar{\eta}(\tau)}
m_{\mu}(X).
\end{align*}
Equivalently, $\CMcal{P}_{\lambda\mu}(q,t)$ defined in (\ref{eq61}) is given by
\begin{align}\label{E:cri1}
\CMcal{P}_{\lambda\mu}(q,t)=\sum_{\nu,s}
\sum_{\tau\in\CMcal{G}(\sigma)}q^{\maj(\tau)}t^{\bar{\eta}(\tau)}
\end{align}
summed over all sequences $\{\nu_{i,j}\}_{i\le j}$ and $\{s_k^h\}_{1\le h\le k\le N}$ defined by \eqref{eq7}-\eqref{eq9} and \eqref{E:defs} where $\nu=\nu_{i+1,j}$ and the RHS of (\ref{E:coeffd2}) is not zero.
On the other hand, the equivalent class $\langle \sigma\rangle$ of tableaux appearing in (\ref{E:eq162}) is determined by the sequences $\{\nu_{i,j}\}_{i\le j}$, thus (\ref{E:eq162}) becomes
\begin{align}\label{E:cri2}
\sum_{\tau\in\langle \sigma\rangle}q^{\maj(\tau)}t^{\bar{\eta}(\tau)}=\sum_{s}\sum_{\tau\in\CMcal{G}(\sigma)}q^{\maj(\tau)}t^{\bar{\eta}(\tau)}
%=\sum_{s}\sum_{\tau\in\CMcal{G}(\sigma)}q^{\maj(\tau)}t^{\bar{\eta}_j(\tau)}
\end{align}
Two expressions of $\CMcal{P}_{\lambda\mu}(q,t)$ in (\ref{E:cri1}) are termwise different only if the qualities in
(\ref{E:cri2}) are not equal. As a consequence, it makes no difference for all statistics $\eta\in \A^+$. We choose to take $\S=\S_2$, because $(v>u\ge z>w)\in\S_2\,\cap a_2$ and (\ref{E:count11}) can be applied.

Let $\overline{\quinv}(\sigma)$ be the number of non-$\quinv$ triples induced by two columns of different heights of $\sigma$. Then  
\begin{align}\label{E:quinvc}
\bar{\eta}(\sigma)=\sum_{i\le j}\bar{\eta}(\sigma_{j}\vert_{i}^{i+1})+\overline{\quinv}(\sigma).
\end{align}
\begin{theorem}\label{T:count2}
Under the assumption of Lemma \ref{L:sigmaj}, let $\sigma_j$ be the canonical tableau determined by Lemma \ref{L:sigmaj}. Then for $\S=\S_2$, we have
\begin{align}\label{E:majsigma1}
\maj(\sigma_j\vert_i^{i+1})&=(j-i)\sum_{k=1}^{N-1}(s_{N}^{k}-s_{k}^{k}),\\
\bar{\eta}(\sigma_{j}\vert_{i}^{i+1})&=\xi_1(s,\nu_{i,j})+\sum_{k=1}^{N}
 \binom{\nu_{i,j}^{k}-\nu_{i,j}^{k-1}}{2}+\sum_{k=1}^{N-1}\sum_{h=1}^k(s_{k+1}^{h}-s_{k}^{h})s_{k}^{h},\notag\\
\label{E:gwsigma1}&\qquad +\sum_{k=1}^{N-1}(\nu_{i,j}^{k+1}-s_{k+1}^{1,k}-\nu_{i,j}^{k}+s_{k}^{1,k})(\nu_{i,j}^{k}-s_{k}^{1,k}).\\
\overline{\quinv}(\sigma)&=\sum_{i=1}^j(\lambda_{j}-\lambda_{i})\sum_{k=1}^{N-1}(s_{N}^{k}-s_{k}^{k}),\notag\\
\label{E:gwsigma2}&\qquad +\sum_{i=1 }^j\sum_{k=1}^{N}\sum_{\ell>j}(\nu_{i,j}^{k}-\nu_{i,j}^{k-1})(\nu_{i,\ell}^{k}-\nu_{i+1,\ell}^{k-1}).
\end{align}
\end{theorem}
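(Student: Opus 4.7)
The plan is to directly enumerate each of the three quantities on the specific canonical tableau $\sigma_j$ built in Lemma~\ref{L:sigmaj}, using the two-round insertion procedure to split the columns of $\sigma_j\vert_i^{i+1}$ into descent columns (first round: for each row-$(i+1)$ entry $N+1-k$, place $N+1-h$ below it with multiplicity $s_h^k - s_{h-1}^k$ for $h>k$) and non-descent columns (second round: paired with common tops so that bottoms weakly decrease as the tops do, by condition~(II) of Definition~\ref{Def:3}). For (\ref{E:majsigma1}), the descents in $\sigma_j\vert_i^{i+1}$ are exactly the first-round columns; summing the multiplicities over $h>k$ yields $s_N^k - s_k^k$ descents with top entry $N+1-k$. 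Since each cell in row $i$ of $\sigma_j$ has leg $j-i-1$, each descent contributes $j-i$ to $\maj$, and summing over $k=1,\ldots,N-1$ (top entry $1$ can never give a descent) produces the stated identity.

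For (\ref{E:gwsigma1}), I would partition ordered pairs of columns $(C_1, C_2)$ with $C_1$ left of $C_2$ in $\sigma_j\vert_i^{i+1}$ into four disjoint classes and show each class contributes exactly one of the four summands. The classes are: (i) pairs with $u=v$, contributing $\sum_k \binom{\nu_{i,j}^k - \nu_{i,j}^{k-1}}{2}$ since $\Q_{\S_2}(z,w,u,v)=0$ when $u=v$ by Remark~\ref{Rem:3}; (ii) pairs of descent columns sharing top entry $N+1-k$ but with different bottoms, giving $\sum_{k<N}\sum_{h\le k}(s_{k+1}^h - s_k^h)\,s_k^h$ by checking the $(z=w,u,v)$ orderings against the explicit $\S_2$-list in Example~\ref{Example:S}; (iii) descent--descent and non-descent--non-descent pairs with distinct tops and distinct bottoms, jointly contributing $\xi_1(s, \nu_{i,j})$; (iv) mixed descent/non-descent pairs, producing the final product via the identifications $\nu_{i,j}^k - s_k^{1,k}$ = (number of non-descent columns with bottom $\ge N+1-k$ and top $<N+1-k$) and $(\nu_{i,j}^{k+1} - s_{k+1}^{1,k}) - (\nu_{i,j}^k - s_k^{1,k})$ = (non-descent columns with bottom exactly $N-k$ together with the descent count $s_{k+1}^k - s_k^k$). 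For each class I would tabulate the orderings of $(z,w,u,v)$ against the $\S_2$-list and its complement from Example~\ref{Example:diff}, using the canonical arrangement to restrict which orderings actually arise.

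For (\ref{E:gwsigma2}), the non-queue-inversion triples $(a,b,c)$ between columns of different heights split into two pieces. The first sum accounts for triples where $(a,b)$ is a descent column in $\sigma_j\vert_i^{i+1}$ and $c$ lies in row $i$ of a strictly shorter rectangle (reading $|\lambda_j - \lambda_i|$ as the count of such $c$'s); the number of descents is $\sum_{k<N}(s_N^k - s_k^k)$ from Step~1, and for a descent column every such $c$ to the right yields a non-$\quinv$ contribution once the column is descent. The second sum accounts for triples with $(a,b)$ in a longer rectangle $\sigma_\ell$ ($\ell>j$) and $c = N+1-k$ in row $i$ of $\sigma_j$: for fixed $c$, the number of column pairs $(a,b)$ in $\sigma_\ell\vert_i^{i+1}$ making $(a,b,c)$ non-queue-inversion is $\nu_{i,\ell}^k - \nu_{i+1,\ell}^{k-1}$, a fact verified by case analysis on the four relative orderings of $a,b,c$ given the fixed value of $c$.

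The main obstacle lies in (\ref{E:gwsigma1}). One must verify that condition~(II) of canonical tableaux (weakly-decreasing bottoms under weakly-decreasing tops in non-descent columns) restricts the admissible orderings of $(z,w,u,v)$ in the mixed descent/non-descent class so that the non-$\S_2$ count collapses exactly to $(\nu_{i,j}^{k+1} - s_{k+1}^{1,k} - \nu_{i,j}^k + s_k^{1,k})(\nu_{i,j}^k - s_k^{1,k})$, and that the contribution of descent--descent pairs with distinct tops matches $\xi_1(s,\nu_{i,j})$ term-by-term via the explicit $\S_2$-orderings of Example~\ref{Example:S}. Careful bookkeeping of which orderings contribute to $\eta_{\S_2}$ versus to $\quinv$ (using the eight-term difference formula in Example~\ref{Example:diff}) will be central to identifying each of the four summands separately rather than merely verifying the aggregate.
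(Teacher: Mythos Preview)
Your plan for \eqref{E:majsigma1} is correct and is exactly what the paper does.  The difficulties are in your decompositions for \eqref{E:gwsigma1} and \eqref{E:gwsigma2}, where the classes you propose do not match the four summands.

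For \eqref{E:gwsigma1}, the four terms do \emph{not} correspond to the four classes you name.  First, $\sum_{h\le k}(s_{k+1}^{h}-s_{k}^{h})s_{k}^{h}$ counts same-top pairs in which \emph{one} column is the descent $(N{+}1{-}h,\,N{-}k)$ and the other has the same top and bottom $\ge N{+}1{-}k$; by item~(1) of the paper's list this second column may be descent \emph{or} non-descent, so your class~(ii) ``pairs of descent columns sharing top'' is too narrow.  Second, your classes~(iii) and~(iv) are essentially swapped: by items~(3) and~(5), the product $(\nu_{i,j}^{k+1}-s_{k+1}^{1,k}-\nu_{i,j}^{k}+s_{k}^{1,k})(\nu_{i,j}^{k}-s_{k}^{1,k})$ counts pairs of \emph{non-descent} columns with bottoms $N{-}k$ and ${>}N{-}k$ and the larger-bottom top $\le N{-}k$ (equivalently $v\ge z$), not mixed pairs; conversely $\xi_{1}(s,\nu_{i,j})$, via items~(2) and~(6), pairs a descent column $(N{+}1{-}h,\,N{-}k)$ with any column of bottom $\ge N{+}1{-}k$ and top $\le N{-}k$ or $\ge N{+}2{-}h$, so $\xi_{1}$ captures descent--descent (different tops) \emph{and} the mixed cases.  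Your identification ``$(\nu_{i,j}^{k+1}-s_{k+1}^{1,k})-(\nu_{i,j}^{k}-s_{k}^{1,k}) =$ non-descent columns with bottom $N{-}k$ together with the descent count $s_{k+1}^{k}-s_{k}^{k}$'' is also wrong: by~(5) this difference counts \emph{only} non-descent columns with bottom $N{-}k$; the descents with bottom $N{-}k$ are already removed in $s_{k+1}^{1,k}$.

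For \eqref{E:gwsigma2}, neither sum is separately a non-$\quinv$ count.  The claim ``for a descent column every $c$ to the right yields a non-$\quinv$ contribution'' is false: with $a>b$, the triple $(a,b,c)$ is a queue inversion precisely when $c<a$, so only some $c$ contribute.  Likewise $\nu_{i,\ell}^{k}-\nu_{i+1,\ell}^{k-1}$ is \emph{not} the number of columns $(a,b)$ of $\sigma_{\ell}$ with $(a,b,c)$ non-$\quinv$ for fixed $c=N{+}1{-}k$; it is a signed difference of row counts, namely $\#\{b:b\ge c\}-\#\{a:a>c\}$.  The paper's argument shows that the $\ell>j$ sum equals $\#\{u>v\ge z\}-\#\{z>v\ge u\}+\#\{u=v\}$ and the $\ell<j$ sum equals $\#\{z>u\}$, and only after adding them and regrouping does one obtain the non-$\quinv$ triples $\{u\ge v\ge z\}\cup\{v\ge z>u\}\cup\{z>u\ge v\}$.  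You will need this algebraic recombination rather than a class-by-class match.
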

\begin{proof}
For clarity, we provide a list of notations where $(z,u)$ is any column of entries between rows $i+1$ and $i$ of $\sigma_j$ and $h\le k$. 
\begin{enumerate}
	\item  $s_{k}^{h}$ is the number of $(z,u)$ such that $z=N+1-h$ and $N+1-k\le u\le N$.
	\item  $s_{k+1}^{h}-s_{k}^{h}$ is the number of $(z,u)$ such that $z=N+1-h$ and $u=N-k$.
	\item  $\nu_{i,j}^{k}-s_{k}^{1,k}$ is the number of $(z,u)$ such that $1\le z\le N-k<u\le N$.
	\item  $\nu_{i,j}^{k+1}-s_{k+1}^{1,k} $ is the number of $(z,u)$ such that $1\le z\le N-k\le u\le N$.
	\item $\nu_{i,j}^{k+1}-s_{k+1}^{1,k}-\nu_{i,j}^{k}+s_{k}^{1,k} $ is the number of $(z,u)$ such that $1\le z\le u=N-k$.
	\item $\nu_{i,j}^{k}-s_{k}^{h,k}$ is the number of $(z,u)$ such that $1\le z\le N-k$ or $N+2-h\le z\le N$ and $N+1-k\le u\le N$.
\end{enumerate}
Thus (1) shows that the number of descent columns $(z,u)$ between rows $i+1$ and $i$ equals
\begin{align}\label{E:descol1}
\#\{(z,u):z>u\}=\sum_{k=1}^{N-1}(s_{N}^{k}-s_{k}^{k})
\end{align}
and (\ref{E:majsigma1}) holds as a result of $\mathsf{leg}(z)=j-i-1$. 

We now turn to count the quadruples $(z,w,u,v)$ between rows $i+1$ and $i$ that are not $\S_2$-quadruples. If $u=v$, then each quadruple $(z,w,u,v)$ is not an $\S_2$-one, contributing
\begin{align}\label{E:chi1}
\#\{(z,w,u,v):u=v\}=\sum_{k=1}^{N}\sum_{1\le i\le j\le n}  \binom{\nu_{i,j}^{k}-\nu_{i,j}^{k-1}}{2}
\end{align}
non-$\S_2$ quadruples because there are $\nu_{i,j}^{k}-\nu_{i,j}^{k-1}$ occurrences of the element $N-k+1$ in row $i$ for $1\le k\le N$. 

Further, (1) and (2) indicate that $s_{k}^{h}(s_{k+1}^{h}-s_{k}^{h})$ counts the quadruples $(z,w,u,v)$ such that $z=w=N+1-h>u>v=N-k$ or $v\ge z=w=N+1-h>u=N-k$. Therefore, 
\begin{align*}
\#\{(z,w,u,v):z=w>u>v\,\textrm{ or }\,v\ge z=w>u\}=\sum_{k=1}^{N-1}\sum_{h\le k}(s_{k+1}^{h}-s_{k}^{h})s_{k}^{h}
\end{align*}
and all these quadruples are not $\S_2$-quadruples. It follows from (3) and (5) that
\begin{align*}
(\nu_{i,j}^{k}-s_{k}^{1,k}) (\nu_{i,j}^{k+1}-s_{k+1}^{1,k}-\nu_{i,j}^{k}+s_{k}^{1,k})
\end{align*}
counts the quadruples $(z,w,u,v)$ such that $u>v=N-k\ge z\ge w$ or $v>u=N-k\ge w>z$. In consequence,
\begin{align*}
&\quad \#\{(z,w,u,v):u>v\ge z\ge w\,\textrm{ or }\,v>u\ge w>z\}\\
&=\sum_{k=1}^{N-1}(\nu_{i,j}^{k}-s_{k}^{1,k}) (\nu_{i,j}^{k+1}-s_{k+1}^{1,k}-\nu_{i,j}^{k}+s_{k}^{1,k}).
\end{align*}
Combining (2) and (6), one sees that $(s_{k+1}^{h}-s_{k}^{h})  (\nu_{i,j}^{k}-s_{k}^{h,k})$ counts the quadruples $(z,w,u,v)$ satisfying one of the conditions:
\begin{itemize}
	\item $z=N+1-h$, $u=N-k<v\le N$ and $1\le w\le N-k$ or $N+1-h<w\le N$;
	\item $w=N+1-h$, $v=N-k<u\le N$ and $1\le z\le N-k$ or $N+1-h<z\le N$.
\end{itemize}
This shows that $\xi_1(s,\nu_{i,j})=\sum_{1\leq h\leq k<N} (s_{k+1}^{h}-s_{k}^{h})  (\nu_{i,j}^{k}-s_{k}^{h,k})$ counts the quadruples $(z,w,u,v)$ subject to the conditions
$z>u\ge w$, $v>u$  or  $w>v\ge z$, $u>v$ or  $w>z>u$, $v>u$  or $z>w>v$, $u>v$. These quadruples are not $\S_2$-quadruples.

It is not hard to check that we have considered all non-$\S_2$ quadruples of $\sigma_j$, which leads to (\ref{E:gwsigma1}). We finally count the non-$\quinv$ triples between $\sigma_j$ and a different $\sigma_{\ell}$, proving (\ref{E:gwsigma2}).

We distinguish the cases that $\ell>j$ or $\ell<j$. For the case $\ell>j$, that is, $\sigma_{\ell}$ is to the left of $\sigma_j$, we will interpret the last term of (\ref{E:gwsigma2}) as the number of triples $(z,u,v)$ where $(z,u)$ is a column of $\sigma_{\ell}$ between rows $i+1$ and $i$, and $v$ is to the right of $u$ in row $i$ of $\sigma_j$. By definition,
\begin{align*}
&\quad (\nu_{i,j}^{k}-\nu_{i,j}^{k-1})\nu_{i,\ell}^{k}-(\nu_{i,j}^{k}-\nu_{i,j}^{k-1})
\nu_{i+1,\ell}^{k-1},\\
&=\#\{(u,v):u\ge v=N+1-k\}-\#\{(z,v):z>v=N+1-k\},\\
&=\#\{(z,u,v):u>v=N+1-k\}-\#\{(z,u,v):z>v=N+1-k\},\\
&\qquad+\#\{(z,u,v):u=v=N+1-k\},\\
&=\#\{(z,u,v):u>v=N+1-k\ge z\}-\#\{(z,u,v):z>v=N+1-k\ge u\},\\
&\qquad+\#\{(z,u,v):u=v=N+1-k\}.
\end{align*}
Summing over $k$ and $\ell$ produces that
\begin{align}\label{eqthm2}
&\sum_{i\le j}\sum_{k=1}^{N} \sum_{\ell>j}(\nu_{i,j}^{k}-\nu_{i,j}^{k-1})(\nu_{i,\ell}^{k}-\nu_{i+1,\ell}^{k-1})\\
&=\#\{(z,u,v):u>v\ge z\}-\#\{(z,u,v):z>v\ge u\}+\#\{(z,u,v):u=v\}\notag.
\end{align}
For the case $\ell<j$, that is, $\sigma_{\ell}$ is to the right of $\sigma_j$, consider the triples $(z,u,v)$ where $(z,u)$ is a column of $\sigma_j\vert_{i}^{i+1}$, and $v$ is to the right of $u$ in row $i$ of $\sigma_{\ell}$. Since there are totally $(\lambda_{i}-\lambda_{j})$ boxes to the right of $u$ in row $i$ of $\sigma$, and (\ref{E:descol1}) is the number of descent columns of $\sigma_j$ between rows $i+1$ and $i$, we deduce that 
\begin{align*}
\sum_{i\le j}(\lambda_{i}-\lambda_{j})\sum_{k=1}^{N-1}(s_{N}^k-s_k^k)=\#\{(z,u,v):z>u\}.
\end{align*}
In combination of (\ref{eqthm2}),
\begin{align*}
&\quad\#\{(z,u,v):u>v\ge z\}-\#\{(z,u,v):z>v\ge u\}\\
&\qquad+\#\{(z,u,v):u=v\}+\#\{(z,u,v):z>u\}\\
&=\#\{(z,u,v):u>v\ge z\}+\#\{(z,u,v):v\ge z>u\}+\#\{(z,u,v):z>u>v\}\\
&\qquad+\#\{(z,u,v):u=v\}\\
&=\#\{(z,u,v):u\ge v\ge z\}+\#\{(z,u,v):v\ge z>u\}+\#\{(z,u,v):z>u\ge v\}.
\end{align*}
Since such triples are exactly non-$\quinv$ triples located at two columns of unequal heights, the proof of (\ref{E:gwsigma2}) is finished.
\end{proof}
We are going to establish the four formulas (\ref{eq6})--(\ref{eq64}) of Theorem \ref{T:mono1}.

\subsection{Proof of (\ref{eq6})} Plugging (\ref{E:gwsigma1}) and (\ref{E:gwsigma2}) to (\ref{E:quinvc}), we obtain for $\S=\S_2$,
\begin{align}\label{E:gwc1}
\bar{\eta}(\sigma)&=\chi_1(\nu)+\sum_{i\le j}\xi_1(s,\nu_{i,j})+\sum_{i\le j}(\lambda_{i}-\lambda_{j})\sum_{k=1}^{N-1}(s_{N}^{k}-s_{k}^{k})\\
&+\sum_{i\le j}\sum_{k<N}((\nu_{i,j}^{k+1}-s_{k+1}^{1,k}-\nu_{i,j}^{k}+s_{k}^{1,k})(\nu_{i,j}^{k}-s_{k}^{1,k})+\sum_{h\le k}(s_{k+1}^{h}-s_{k}^{h})s_{k}^{h})\notag.
\end{align}
Consequently (\ref{eq6}) follows by Theorem \ref{T:count2}, (\ref{E:cri1}) and (\ref{E:count11}), as desired.

We introduce two new variations of canonical tableaux: $\inv$-canonical tableaux and dual $\inv$-canonical tableaux. 
As (\ref{eq6}) is derived from the compact formula (\ref{E:count11}) for canonical tableaux, the other three formulas  (\ref{eq62})--(\ref{eq64}) are consequences of compact formulas (\ref{E:count12})--(\ref{E:count14}) for $\inv$-canonical tableaux, dual canonical tableaux and dual $\inv$-canonical tableaux, respectively.

Since the proof strategy for (\ref{eq62})--(\ref{eq64}) is essentially the same as the one for (\ref{eq6}), we only provide the different proof details.

\subsection{Proof of (\ref{eq63})} Lemma \ref{L:sigmaj} 
also holds for dual canonical tableaux where the sequences $\{\nu_{i,j}\}_{i\le j}$ and $\{s_k^h\}_{0\le h<N,\,h\le k\le N}$ satisfy (\ref{E:defs2}), (\ref{eq7}) and (\ref{eq8}), under the condition that the RHS of (\ref{E:coeffdag2}) is not zero.

We shall take $\S=\S_5$ as $(z>w>v>u)\in\S_5\,\cap a_1$ and (\ref{E:count13}) can be used.
The bijection $\sigma\mapsto (\nu,s)$ for a dual canonical tableau $\sigma$ is given at the end of Section \ref{S:thm3}, where the coefficient $d_{\dagger}(\sigma)$ is defined. In consequence, for $0\le h<N$ and $h\le k\le N$,
\begin{enumerate}
	\item  $s_{k}^{h}$ is the number of $(z,u)$ such that $z=h+1$ and $1\le u\le k$.
	\item  $s_{k+1}^{h}-s_{k}^{h}$ is the number of $(z,u)$ such that $z=h+1$ and $u=k+1$.
	\item  $\nu_{i,j}^{k}-s_{k}^{0,k}$ is the number of $(z,u)$ such that $1\le u\le k$ and $k+1<z\le N$.
	\item  $\nu_{i,j}^{k+1}-s_{k+1}^{0,k} $ is the number of $(z,u)$ such that $1\le u\le k+1<z\le N$.
	\item $\nu_{i,j}^{k+1}-s_{k+1}^{0,k}-\nu_{i,j}^{k}+s_{k}^{0,k} $ is the number of $(z,u)$ such that $k+1=u<z\le N$.
	\item $\nu_{i,j}^{k}-s_{k}^{h,k}$ is the number of $(z,u)$ such that $1\le z\le h$ or $k+2\le z\le N$ and $1\le u\le k$.
	\item $\nu_{i+1,j}^{h}-s_{k+1}^{0,h-1}$ is the number of $(z,u)$ such that $1\le z\le h$ and $k+2\le u\le N$.
	\item $s_{h}^{0,h-1}$ is the number of $(z,u)$ such that $1\le z\le h$ and $1\le u\le h$.
\end{enumerate}
By (1), we know that $\sum_{0\le k<N}(s_{N}^k-s_k^k)$ counts the non-descent columns $(z,u)$ of $\sigma_j\vert_{i}^{i+1}$ and $\mathsf{leg}(z)=j-i-1$, leading to
\begin{align*}
\maj(\sigma_j\vert_i^{i+1})=\sum_{i,j}(\mathsf{leg}(z)+1)-\sum_{i,j, z\le u}(\mathsf{leg}(z)+1)
=n(\lambda)-(j-i)\sum_{0\le k<N}(s_{N}^k-s_k^k).
\end{align*}
From (2) and (6), one sees that $(s_{k+1}^h-s_k^h)(\nu_{i,j}^k-s_k^{h,k})$ counts the quadruples $(z,w,u,v)$ with one of the following properties:
\begin{itemize}
	\item $u=k+1\ge z=h+1$, $u>v$ and $1\le w\le h$ or $k+2\le w\le N$;
	\item $v=k+1\ge w=h+1$, $v>u$ and $1\le z\le h$ or $k+2\le z\le N$.
\end{itemize}
This leads to 
\begin{align*}
\xi_0(s,\nu_{i,j})&=\sum_{0\le h\le k<N}(s_{k+1}^h-s_k^h)(\nu_{i,j}^k-s_k^{h,k})\\
&=\#\{(z,w,u,v),(w,z,v,u): (u>v\ge z>w), (u\ge z>v\ge w), \\
&\quad \quad (u\ge z>w>v), (z>v>u\ge w), (z>v\ge w>u)\}.
\end{align*}
The definition of $\S_5$ indicates that all these quadruples are exactly the ones counted by $\eta(\sigma)$. 
An important point to note here is that the quadruples $(z,w,u,v),(w,z,v,u)$ with the total ordering $(z>w>v>u)$ of $\S_5$ is not taken into account
because any dual canonical tableau $\sigma$ does not contain such quadruples.

 It remains to count the queue inversion triples induced by $\sigma_j$ and $\sigma_{\ell}$ with $\ell\ne j$. For the case $\ell<j$, let us recall that $\lambda_i-\lambda_j$ is the number of boxes of row $i$ that is to the right of $\sigma_j$. Then
\begin{align}\label{E:dual1}
\sum_{i\le j}(\lambda_i-\lambda_j)\sum_{0\le k\le N}(s_{N}^{k}-s_{k}^{k})=\#\{(z,u,v): z\le u\}.
\end{align}
For the case $\ell>j$, consider the triples $(z,u,v)$ where $(z,u)$ is a column of $\sigma_{\ell}\vert_i^{i+1}$ and $v$ is an entry of row $i$ of $\sigma_j$. Because  $\nu_{i,j}^k-\nu_{i,j}^{k-1}$ counts the occurrences of $k$ in row $i$ of $\sigma_j$, $\nu_{i,\ell}^{k-1}$ equals the number of entries that are strictly less than $k$ in row $i$ of $\sigma_{\ell}$ and $\nu_{i+1,\ell}^{k}$ is the number of elements that are not greater than $k$ in row $i+1$ of $\sigma_{\ell}$, we arrive at
\begin{align*}
(\nu_{i,j}^{k}-\nu_{i,j}^{k-1})(\nu_{i,\ell}^{k-1}-\nu_{i+1,\ell}^k)=\#\{(z,u,v):u<v=k<z\}
-\#\{(z,u,v):z\le v=k\le u\}.
\end{align*}
Summing over all $i\le j<\ell$ and $1\le k\le N$, we are led to
\begin{align*}
&\quad\chi_3(\nu)+\sum_{i\le j}(\lambda_i-\lambda_j)\sum_{0\le k\le N}(s_{N}^{k}-s_{k}^{k})\\
&=\#\{(z,u,v):u<v<z\}-\#\{(z,u,v):z\le v\le u\}+\#\{(z,u,v): z\le u\}\\
&=\#\{(z,u,v):u<v<z\}+\#\{(z,u,v):v<z\le u\}+\#\{(z,u,v): z\le u<v\},
\end{align*}
by (\ref{E:dual1}), which is exactly the number of queue inversion triples between two rectangular fillings $\sigma_j$ and $\sigma_{\ell}$ for all $j\ne \ell$. Therefore (\ref{eq63}) follows from the relation by (\ref{E:count13}):
\begin{align*}
\CMcal{P}_{\lambda\mu}(q,t)=t^{n(\lambda')}\sum_{\nu,s}
\sum_{\sigma\in\CMcal{C}_{\dagger}(\lambda)}d_{\dagger}(\sigma)q^{\maj(\sigma)}t^{-\eta(\sigma)}
\end{align*}
for any $\sigma\in\CMcal{C}_{\dagger}(\lambda)$, where the sum is ranged over all sequences $\{\nu_{i,j}\}_{i\le j}$ and $\{s_k^h\}_{0\le h<N,\,h\le k\le N}$ satisfy (\ref{E:defs2}), (\ref{eq7})--(\ref{eq9}) and the coefficient $d_{\dagger}(\sigma)$ is defined in (\ref{E:coeffdag}) and (\ref{E:coeffdag2}).

\subsection{Proof of (\ref{eq62}) and (\ref{eq64})} Let us review the bijection $\sigma\mapsto \sigma'$ given in Definition \ref{Def:eta}, we shall use it to introduce $\inv$-canonical and dual $\inv$-canonical tableaux.
\begin{definition}[(dual) $\inv$-canonical tableaux]
A tableau $\sigma$ is called {\em a (dual) $\inv$-canonical tableau} if and only if $\sigma'$ is a (dual) canonical tableau. 
\end{definition}
Following the notations of Theorem \ref{T:count2}, one has for $\sigma\in\CMcal{C}_{\diamond}(\lambda)$,
\begin{align*}
\maj(\sigma'_j\big\vert_{j-i}^{j-i+1})&=i\sum_{k=1}^{N-1}(s_N^k-s_k^k),\\
\eta^*(\sigma'_j\big\vert_{j-i}^{j-i+1})&=\eta(\sigma_j\vert_i^{i+1}),\\
\overline{\inv}(\sigma')&=\sum_{i=1}^j(\lambda_{j-i+1}-\lambda_{j})\sum_{k=1}^{N-1}(s_{N}^{k}-s_{k}^{k}),\notag\\
&\qquad +\sum_{i=1 }^j\sum_{k=1}^{N}\sum_{\ell>j}(\nu_{i,j}^{k}-\nu_{i,j}^{k-1})
(\nu_{\ell-j+i,\ell}^{k}-\nu_{\ell-j+i+1,\ell}^{k-1}),
\end{align*}
where $\overline{\inv}(\sigma')$ denotes the number of non-$\inv$ triples located at two columns of different heights of $\sigma'$. Compare these statistics with the ones in Theorem \ref{T:count2} and compare (\ref{E:count12}) with (\ref{E:count11}),  we conclude (\ref{eq62}). In the same manner, (\ref{eq64}) is obtained from (\ref{eq63}) and (\ref{E:count14}).

\section{Final remarks}\label{S:final_remark}

As we mentioned in Section \ref{S:intro}, canonical tableaux are $\quinv$-sorted tableaux from \cite{O:23,O:24}, but not all $\quinv$-sorted tableaux are canonical. For example, 
\begin{align*}
	\sigma=\begin{ytableau}[] 5&5&5&5&  3&3&3 \\ 4&4&2&2&  2&1&4 \\ 2&5&1&3&  5&2&1
	\end{ytableau}
\end{align*}
is a $\quinv$-sorted tableau, while it is not canonical, as the neutral block $(2,5,1)$ of the bottom row  does not have property (I) of Definition \ref{Def:3}. 

The main reason is that the block defined for $\quinv$-sorted tableaux in \cite{O:23,O:24} consists of exclusively contiguous entries, where the number of queue inversion triples is locally minimized. For instance, $(2,5)$ and $(1)$ are regarded as two separate blocks. For block $(2,5)$, the number of queue inversion triples of the square formed by $(2,5)$ and its upper neighbor $(4,4)$ is zero, so is the number of queue inversion triples of the column $(4,1)$.
In contrast, the block defined for canonical tableaux in Definition \ref{Def:block} may contain non-consecutive entries, by which the number of $\S$-quadruples and $\S$-triples counted by the statistic $\eta$ is minimized.

On the other hand, $\A\cap \{\inv,\quinv\}=\varnothing$ and compact formulas stated in Theorem \ref{T:2} seem to be quite different from (\ref{E:sorted}). It would be interesting to see whether the compact formula (\ref{E:sorted}) as a sum over all sorted tableaux could give rise to a positive monomial expansion of the modified Macdonald polynomials.

Required by (2) of Definition \ref{Def:typeqinv}, $\S \cap a_2$ has exactly one element, that is, either $\S \cap a_2=\{(z,w,u,v): (v>u\ge z>w)\}$ or $\S \cap a_2=\{(z,w,u,v): (u>v\ge z>w)\}$.  Theorem \ref{P:can11} provides the compact formula (\ref{E:count11}) under the condition that $(v>u\ge z>w)\in \S \cap a_2$. For the other case that $\S \cap a_2=\{(z,w,u,v): (u>v\ge z>w)\}$, we find the following proposition.

\begin{proposition}\label{prop:a2equal}
	Let $\eta_i=\eta$ defined in (\ref{E:etai}) for $\S=\S_i$ for $1\le i\le 8$. 
	For two sets $\S_i$ and $\S_j$ that differ only by the elements of $a_2$, say $\S_i \cap a_2=\{(z,w,u,v): v>u\ge z>w\}$ and $\S_j \cap a_2=\{(z,w,u,v): u>v\ge z>w\}$, we have
	\begin{align}\label{E:equi13}
		\sum_{\tau\in\CMcal{G}(\sigma)}q^{\maj(\tau)}t^{\eta_i(\tau)}=\sum_{\tau\in\CMcal{G}(\sigma)}q^{\maj(\tau)}t^{\eta_j(\tau)}.
	\end{align}
\end{proposition}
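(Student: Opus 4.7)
Here is my plan.

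The two statistics $\eta_i$ and $\eta_j$ agree on all quadruples outside $a_2 \cup a_2^r$, since $\S_i$ and $\S_j$ differ only within $a_2$ (and, via property (1) of Definition \ref{Def:typeqinv}, within $a_2^r$). For a pair of columns of equal height with tops $z > w$ and bottoms $u, v \ge z$ with $u \neq v$, $\S_i$ counts the quadruple iff $v > u$ while $\S_j$ counts iff $u > v$. The plan is to show that $\eta_i(\tau) + \eta_j(\tau)$ is constant on $\CMcal{G}(\sigma)$ and that the distribution of $\eta_i$ on $\CMcal{G}(\sigma)$ is palindromic, whence (\ref{E:equi13}) follows immediately from $\sum t^{\eta_j(\tau)} = \sum t^{T - \eta_i(\tau)} = \sum t^{\eta_i(\tau)}$, where $T$ denotes the constant value of $\eta_i + \eta_j$.

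To establish both claims, I adapt the case analysis in the proof of Lemma \ref{L:symQ1} to $\S_j$. For two columns $(a,b),(c,d)$ of $\Delta_{\pi^\circ}(\tau)$ with $a > c$ in a non-descent block, the quadruple $(a,c,b,d)$ lies in $\S_j$ iff $b > d$, matching the $(u > v \ge z > w)$ pattern of $\S_j \cap a_2$, in place of the $\S_i$-criterion ``$b < d$ or $a > d$''. The shared invariant piece (subcase $a > d$, corresponding to the $B$-pattern $u \ge z > v \ge w$) is preserved by $\sym^*(Q)$, so the varying contribution yields $\eta_j(\tau) = c + \inv(\pi^\circ(w))$ in place of $\eta_i(\tau) = c' + \coinv(\pi^\circ(w))$ from Lemma \ref{L:symQ1}, where $w$ is the block's bottoms and $c, c'$ are constants determined by $\sigma$. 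Since $\inv(w) + \coinv(w)$ equals the number of non-equal pairs in the bottom multiset --- a constant $K$ independent of the arrangement --- $\eta_i(\tau) + \eta_j(\tau) = c' + c + K$ is constant on $\CMcal{G}(\sigma)$.

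The distribution of $\eta_i$ is palindromic because, by the compact formula of Theorem \ref{P:can11}, $\sum_\tau t^{\eta_i(\tau)} = t^{c'} P(t)$ where $P(t) = d_\circ(\sigma)$ is a product of palindromic $q$-multinomial coefficients, each obtained via Lemma \ref{L:length_inv}. The center of symmetry of $t^{c'} P(t)$ is $c' + \tfrac{1}{2}\deg P$, and for the palindromic identity $\sum t^{\eta_j} = \sum t^{T-\eta_i} = \sum t^{\eta_i}$ to close, this center must coincide with $T/2 = (c' + c + K)/2$. This reduces to the identity $c' - c = K - \deg P$, which is equivalent to $\eta_j(\sigma) - \eta_i(\sigma) = \deg P$ for canonical $\sigma$: the left-hand side counts pairs $(i,j)$ in each non-descent block of $\sigma$ with $a_i > a_j$, $c_j \ge a_i$, $c_i > c_j$ (i.e., the $a_2$-pairs counted by $\eta_j$ in the weakly-decreasing-bottoms canonical), while the right-hand side equals the maximum value of $\coinv$ over the valid rearrangements of the block's bottoms.

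The main obstacle is the block-by-block verification of $\eta_j(\sigma) - \eta_i(\sigma) = \deg P$: one must show that in the $\circ$-canonical form the count of $\S_j$-contributing $a_2$-pairs exactly matches the maximum $\coinv$ achievable by a valid rearrangement under the constraint $c_i \ge a_i$. This is a local combinatorial identity at each non-descent block that, once established, combines with the palindromic $q$-multinomial structure via the same recursive scheme used in the proof of Theorem \ref{P:can11} to yield (\ref{E:equi13}).
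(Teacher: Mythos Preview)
Your proposal has a genuine gap: the claimed identity $\eta_j(\tau)=c+\inv(\pi^{\circ}(w))$ is false, and consequently $\eta_i(\tau)+\eta_j(\tau)$ is \emph{not} constant on $\CMcal{G}(\sigma)$.

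The error is in your case analysis for $\eta_j$ on a non-descent block. You correctly note that for a pair of columns $(a,b),(c,d)$ (left to right in $\Delta_{\pi^{\circ}}(\tau)$) with $a>c$, the quadruple lies in $\S_j$ iff $b>d$. But you omit the case $a=c$: there the quadruple has $z=w$ and is governed by $\S^{*}$, which is common to $\S_i$ and $\S_j$ and counts iff $d>b$. Thus on pairs with equal tops, $\eta_j$ contributes $\coinv$-like terms, not $\inv$-like, and $\eta_j$ is not of the form $c+\inv(\pi^{\circ}(w))$.

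A concrete counterexample: take a single two-row non-descent block with top row $(2,1,1)$ and bottom multiset $\{2,2,3\}$. The canonical bottom is $(3,2,2)$, and $\CMcal{G}(\sigma)$ contains the three bottoms $(3,2,2)$, $(2,3,2)$, $(2,2,3)$. A direct computation gives
\[
(\eta_i,\eta_j)=(0,2),\quad (1,0),\quad (2,1)
\]
respectively, so $\eta_i+\eta_j\in\{2,1,3\}$ is not constant. (The distributions $\{0,1,2\}$ and $\{2,0,1\}$ do coincide as multisets, as the proposition predicts, but not via your palindrome mechanism.) In particular $\inv(\pi^{\circ}(w))\in\{2,1,0\}$ for these three bottoms, and $\eta_j-\inv$ takes the values $0,-1,1$, so no constant $c$ works.

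The paper proves the proposition by an entirely different route: it builds an explicit bijection $g$ on $\CMcal{G}(\sigma)$ with $(\maj,\eta_i)(\tau)=(\maj,\eta_j)(g(\tau))$. On a two-row non-descent block one first reverses both rows, then applies a sequence of $\rho_i$'s to restore the top row to weakly decreasing order, and finally reverses each neutral sub-block in the bottom row; one checks that the count of quadruples with $b>d\ge\{a,c\}$ is invariant under each $\rho_i$, which is the key technical step. This bijection is then propagated through $\CMcal{G}(\sigma)$ via the same $\delta_{\tilde{v}}^{r}$ machinery used in the proof of Theorem~\ref{P:can11}. Your palindrome idea, while natural, cannot be repaired without essentially reproducing this bijection, because the obstruction lives precisely in the interaction between the $a>c$ pairs (where $\S_i,\S_j$ differ) and the $a=c$ pairs (where they agree but contribute $\coinv$-like, breaking the $\inv$/$\coinv$ complementarity).
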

\begin{remark}
If $\S_i \cap a_2=\S_j\cap a_2=\{(z,w,u,v): v>u\ge z>w\}$ for $i\ne j$, then  
(\ref{E:equi13}) is not valid anymore because of (\ref{E:count11}) and $\eta_i(\sigma)\ne \eta_j(\sigma)$ generally.
\end{remark}

\begin{proof}
	We shall establish (\ref{E:equi13}) bijectively. First we prove (\ref{E:equi13}) for the set $\T$ of two-row rectangular tableaux with exclusively non-descent columns and the top-row entries are weakly decreasing from left to right, that is, we claim that there is a bijection $g:\T\rightarrow \T$ such that $\sigma\sim g(\sigma)$ and
	\begin{align}\label{E:equi132}
	(\maj,\eta_{i})(\sigma)=(\maj,\eta_{j}) (g(\sigma)).
	\end{align}
	Suppose that $\sigma\in \T$ is drawn in Figure \ref{Fig:1} such that $a_i\le p_i$ for $1\le i\le k$ and $a_1\ge a_2\cdots\ge a_k$. Note that for any two columns $(a,b)$ and $(c,d)$ of $\sigma$ from left to right, we have $b\ge a\ge c$ and $d\ge c$. Let $\sigma^r$ be obtained from $\sigma$ by reversing the entries of each row.
	Thus the bijection $\sigma\mapsto \sigma^r$ shows that 
	\begin{align}\label{E:1}
	\#\{(a,c,b,d)\in \sigma:  d>b\ge \{a,c\}\}=\#\{(a,c,b,d)\in \sigma^r: b>d\ge \{a,c\}\} .
	\end{align}
	We then apply a sequence of $\rho_i$'s to reverse the top-row entries of $\sigma^r$, say
	\begin{align*}
	\rho_{i_1}\circ \rho_{i_2}\circ\cdots\circ\rho_{i_k}(\sigma^r)=\tau
	\end{align*}
	where $\tau$ is the resulting tableau with only non-descent columns by Lemma \ref{L:2}. We assert that 
	\begin{align}\label{eqlem1}
	\#\{(a,c,b,d)\in \sigma^r: b>d\ge \{a,c\}\} =\#\{(a,c,b,d)\in\tau: b>d\ge\{a,c\}\}.
	\end{align}
	Before we show (\ref{eqlem1}), we first discuss how it gives the bijection $g:\T\rightarrow \T$ with the property (\ref{E:equi132}). Suppose that \eqref{eqlem1} is true, we obtain
	\begin{align*}
	\#\{(a,c,b,d)\in \sigma: d>b\ge \{a,c\}\}=\#\{(a,c,b,d)\in \tau: b>d\ge \{a,c\}\}
	\end{align*}
	by (\ref{E:1}), which is equivalent to 
	\begin{align}\label{eqlem2}
	\#\{(a,c,b,d)\in \sigma:  d>b\ge a\ge c  \}=\#\{(a,c,b,d)\in \tau: b>d\ge a\ge c\}
	\end{align}
	as the top rows of $\sigma$ and $\tau$ are identical and weakly decreasing. For each maximal-by-inclusion sequence $(w_1,\ldots,w_m)$ in the bottom row of $\tau$ with the upper neighbor $(a,\ldots,a)$ for some $a\in \mathbb{N}$, replace the entries of $(w_1,\ldots,w_m)$ in $\tau$ by $(w_m,\ldots,w_1)$ from left to right. Define $g(\sigma)$ to be the resulting tableau and clearly $g(\sigma)\in\T$. Since the map $g:\T\rightarrow \T$ is a composition of two bijections $\sigma\mapsto \tau$ and $\tau\mapsto g(\sigma)$, $g$ is a bijection. Furthermore, the transformation $\tau\mapsto g(\sigma)$ produces
	\begin{align}
	\nonumber&\,\,\quad\#\{(a,c,b,d)\in \tau: b>d\ge a\ge c\}\\
	\label{eqlem3}&=\#\{(a,c,b,d)\in g(\sigma): (b>d\ge a>c) \ \text{or} \ (d>b\ge a=c)\}.
	\end{align}
	In addition, the set of pairs $(a,d)$ such that $a>d$ and $d$ is to the right of $a$ is invariant under $g$, because all columns are non-descents and the top-row entries are weakly decreasing for each tableau of $\T$. In other words,
	\begin{align}\label{eqlem4}
	\#\{(a,c,b,d)\in \sigma:b\ge a>d\ge c\}=\#\{(a,c,b,d)\in g(\sigma): b\ge a>d\ge c\}
	\end{align}
	Combining \eqref{eqlem2}-\eqref{eqlem4} yields (\ref{E:equi132}).
	
	We now turn to prove \eqref{eqlem1}, that is, to show that the number of quadruples $(a,c,b,d)$ that $b>d\ge\{ a,c\}$ is invariant under any $\rho_i$.
	Suppose $\rho_i$ acts on $\young(zw,uv)$ with $z<w$. Since $z\le u$ and $w\le v$, we have either $v\ge w>u\ge z$ or $\min(u,v)\ge w>z$. 
	
	If $v\ge w>u\ge z$, then $\rho_i$ swaps both $z,w$ and $u,v$. Therefore the number of quadruples $(a,c,b,d)$ that $b>d\ge\{ a,c\}$ is preserved by $\rho_i$. Otherwise $\min(u,v)\ge w>z$ and $\rho_i$ only switches $z$ and $w$. For any column $(a,b)$ standing to the left of the column $(z,u)$ (see below),
	\begin{center}
		\begin{minipage}[H]{0.26\linewidth}
			\begin{ytableau}
				a&\none[\dots] &   z  & w     & \none[]
				\\
				b&\none[\dots] &   u  & v     & \none[]\\
			\end{ytableau}
			$\xrightarrow{\rho_i}$
		\end{minipage}
		\begin{minipage}[H]{0.2\linewidth}
			\begin{ytableau}
				a&\none[\dots] &       w  & z     & \none[]
				\\
				b&\none[\dots] &   u  & v     & \none[]\\
			\end{ytableau}
		\end{minipage}
	\end{center}
	we find $\chi( b>u\ge\text{max}\{a,z\})+\chi(b>v\ge\text{max}\{a,w\})=\chi(b>u \ge \text{max}\{a,w\})+\chi(b>v \ge \text{max}\{a,z \})$ for all $\min(u,v)\ge w>z$ and $a\le b$ by considering all total orderings of $a,b,z,w,u,v$.
	The same conclusion is also true for the cases that the column $(a,b)$ is to the right of the column $(w,v)$. That is, (\ref{eqlem1}) is established.
	
	What remains is to extend the bijection $g$ from the set $\T$ to the set $\CMcal{G}(\sigma)$. By a reduction similar to Lemma \ref{L:rec}, it is sufficient to establish the bijection $g:\CMcal{G}(\sigma)\rightarrow \CMcal{G}(\sigma)$ whenever $\sigma$ is a rectangular canonical tableau. For any $\tau \in \CMcal{G}(\sigma)$ with width $k$, we shall define a new tableau $g(\tau)$ as follows. 
	
	For any $r$ and the unique non-descent block $p$ of $\tau\vert_r^r$, let $\theta$ be the two-row tableau consisting of $p$ and its upper neighbor, say $a$. Let $\pi^{\diamond}(\theta)$ be a rearrangement of the columns of $\theta$ such that the top-row entries of $\pi^{\diamond}(\theta)$ form the sequence $\pi^{\diamond}(a)$ and the bottom-row entries correspond to the sequence $\pi^{\diamond}(p)$. Thus the top-row entries of $\pi^{\diamond}(\theta)$ are weakly decreasing and all columns are non-descents. Performing the bijection $g$ on $\pi^{\diamond}(\theta)$ produces a rearrangement of $\pi^{\diamond}(p)$ and denote it by $\pi^{\diamond}(w)$.
	
	Let $\tilde{w}$ be the shortest permutation such that $\tilde{w}(p)=w$. We extend it to a permutation $\tilde{v}$ of $[k]$ by letting $\tilde{v}(i)=i$ for $1\le i\le x$ and $\tilde{v}(i+x)=\tilde{w}(i)$ for $1\le i\le k-x$
	where $x=\des(\tau\vert_{r}^{r+1})$. Define $g(\tau)$ to be the resulting tableau after applying $\delta_{\tilde{v}}^r$ on $\tau$ for all $r$. Following the inductive argument in Proposition \ref{pro1A},
	we conclude that $(\maj,\eta_i)(\tau)=(\maj,\eta_j)(g(\tau))$ from (\ref{E:equi132}). 
\end{proof}

\section*{Acknowledgements}
The first author was supported by the Austrian Research Fund FWF Elise-Richter Project V 898-N. Both authors are supported by the Fundamental Research Funds for the Central Universities, Project No. 20720220039 and the National Nature Science Foundation of China (NSFC), Project No. 12201529.

%%%%%%%%%%%%%%%%%%%%%%%%%%%%%%%%%%%%%%%%%%%%%%%%%%%%%%%%%

\end{document}